\documentclass[letterpaper, 11pt,  reqno]{amsart}
\usepackage{package_rapport}
\usepackage{float}
\usetikzlibrary{arrows}
\usetikzlibrary{calc, decorations.pathreplacing, positioning}

\tikzset{
  treenode/.style = {align=center, inner sep=0pt, text centered,
    font=\sffamily},
  arn_n/.style = {treenode, circle, white, font=\sffamily\bfseries, draw=black,
    fill=black, text width=1.5em},
  arn_r/.style = {treenode, circle, red, draw=red, 
    text width=1.5em, very thick},
    arn_b/.style ={treenode, circle, blue, draw=blue, 
    text width=1.5em, very thick},
  arn_x/.style = {treenode, rectangle, draw=black,
    minimum width=1em, minimum height=1em}
}

\makeatletter
\@namedef{subjclassname@2025}{%
  \textup{2025} Mathematics Subject Classification}
\makeatother

\begin{document}
\baselineskip = 13.5pt

\title[]
{Long time well-posedness and wave turbulence limit for a quadratic non-linear Schrödinger equation with well prepared random initial data}

\author[]
{Enguerrand Brun}
 
\email{enguerrand.brun@ens-lyon.fr}



    \vspace*{-8mm}
\begin{abstract}
We prove a long time well-posedness result for a quadratic non-linear Schrödinger equation with random well prepared initial conditions in dimension $d\ge 1$. In some regimes, we derive the Wave Kinetic equation associated with the equation in dimension $d\ge 5$, almost up to the kinetic time to the power $d/(d+1)$. Our analysis relies on the ideas introduced by Deng and Hani.
\end{abstract}

\date{\today}
\maketitle

\tableofcontents

 \section{Introduction}
 \subsection{Setting and main results}\label{subsection1}
 We aim to apply the ideas of the recent work of Deng and Hani, specifically, \cite{Deng_2021} to a new context.
 \noi We are interested in showing a long time well-posedness result in dimensions $d\ge1$ and deriving a wave turbulence result in dimensions $d\ge5$ for a quadratic non-linear Schrödinger equation. 
 \subsubsection{Long time well-posedness}
 The point of our well-posedness result is that it uses randomness in the initial data to guaranty the existence of the solution for longer time intervals than the deterministic theory ; similarly to how randomness can help improve the regularity threshold for which we obtain local well-posedness such as in \cite{bourgain1996invariant}, \cite{burq2008random}, \cite{burq2013probabilistic} and \cite{colliander2012almost}. In these papers, frequencies of all sizes matter, while in our case, only high frequencies (of size $\sim L$) contribute significantly. The offset is that our set of possible initial data is not dense in Sobolev spaces, unlike in the low regularity local well-posedness theory.  \\
 
 \noi More precisely, we will show local well-posedness for equation \eqref{nls} below for well prepared initial data on the flat torus up to a longer time than what is known by the deterministic theory on a set of high probability. We also provide a propagation of $L^\infty$ bounds thanks to randomness, much like in \cite{dengwavetwo}. This result is presented in Theorems \ref{theorem1} and \ref{theorem2}.  \\
 
\noi We study the following quadratic nonlinear Schrödinger equation :

 \begin{equation}
   \begin{cases} \label{nls}
     i\dt u - \Dl u =  u^2  \\
     u(0,x)=u_0(x) 
     \end{cases}
\end{equation}
where $\Dl:=\sum_{i=1}^d \partial_{x_i}^2$, $(t,x)\in \R\times \T_L^d$, $ \T_L^d:=\R^d/\Z_L^d$, $\Z_L^d := \frac{1}{L} \Z^d$.  \\
 
 \noi First, we recall previously known results of local well-posedness for \eqref{nls}, which rely on Bourgain's analysis :
\begin{proposition} \label{basiclwp}
\renewcommand{\theenumi}{\roman{enumi}}%
    \item $ \ \forall  \, s>d/2-1$, $\exists \, \, C >0$, $\forall  \, L \ge 1,$ $\forall \, \, u_0\in H^s(\T_L^d)$, $\exists ! \, u \in C^0([-T,T],H^s(\T_L^d))$ solution of~~\eqref{nls} where $T=C \|u_0\|_{H^s}^{-1}$. 
\end{proposition}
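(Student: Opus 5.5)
The plan is a contraction argument in Bourgain spaces. To land exactly on the threshold $T\|u_0\|_{H^s}\lesssim 1$, rather than on $T^\theta\|u_0\|_{H^s}\lesssim1$, I will \emph{adapt the modulation weight to the time scale $T$}. I have not checked that the powers of $T$ balance in the bilinear estimate below; that is the crux.

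\textbf{Spaces and linear estimates.} For $0<T\le 1$ and $b\in\R$, let $X^{s,b}_{(T)}$ be the space of functions on $\R\times\T^d_L$ with norm
\[
\|u\|_{X^{s,b}_{(T)}}=T^{1/2}\,\bigl\|\langle k\rangle^s\langle T(\tau+|k|^2)\rangle^{b}\,\widehat u(\tau,k)\bigr\|_{L^2_\tau \ell^2_k(\Z^d_L)},
\]
where $\ell^2_k(\Z^d_L)$ carries the normalized counting measure $L^{-d}\sum_k$. This is the usual $X^{s,b}$ norm written in the rescaled time $t/T$. Fix $b=\tfrac12+\ep$ and a cutoff $\eta$ with $\eta=1$ on $[-1,1]$. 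Rescaling time reduces the standard linear estimates to the case $T=1$. Their constants depend only on $b$, and not on $L$, since they act mode by mode in $k$. The three estimates I will use are:
\begin{itemize}
\item the free evolution: $\|\eta(t/T)e^{-it\Dl}u_0\|_{X^{s,b}_{(T)}}\lesssim T^{1/2}T^{-1/2}\|u_0\|_{H^s}=\|u_0\|_{H^s}$ (after accounting for the normalization);
\item the Duhamel estimate: $\|\eta(t/T)\int_0^t e^{-i(t-t')\Dl}F(t')\,dt'\|_{X^{s,b}_{(T)}}\lesssim T\,\|F\|_{X^{s,b-1}_{(T)}}$, where the factor $T$ comes from $dt'=T\,d(t'/T)$;
\item the embedding $X^{s,b}_{(T)}\hookrightarrow C^0([-T,T],H^s)$, uniformly in $T$ and $L$.
\end{itemize}

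\textbf{Bilinear estimate (main obstacle).} I will aim for
\[
\|uv\|_{X^{s,b-1}_{(T)}}\le C_s\,\|u\|_{X^{s,b}_{(T)}}\|v\|_{X^{s,b}_{(T)}},\qquad s>d/2-1,
\]
with $C_s$ independent of $L\ge1$ and $T\le1$. First I will write the estimate by duality as a weighted trilinear sum over $k=k_1+k_2$ and $\tau=\tau_1+\tau_2$. Then I will use $\langle k\rangle^s\lesssim\langle k_1\rangle^s+\langle k_2\rangle^s$ and put the weight on the higher frequency. After Cauchy--Schwarz in $(\tau_1,k_1)$, everything reduces to bounding, uniformly in $(\tau,k)$,
\[
\sup_{\tau,k}\;\frac{1}{\langle T(\tau+|k|^2)\rangle^{2(1-b)}}\;L^{-d}\!\!\sum_{k_1+k_2=k}\int \frac{\langle k_1\rangle^{-2s}\,T\,d\tau_1}{\langle T(\tau_1+|k_1|^2)\rangle^{2b}\langle T(\tau-\tau_1+|k_2|^2)\rangle^{2b}} .
\]
The $\tau_1$ integral gives $\langle T(\tau+|k_1|^2+|k_2|^2)\rangle^{-2b}$. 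Note that the phase $|k_1|^2+|k_2|^2-|k|^2=-2k_1\cdot k_2$ is not coercive for $u^2$, so I will not rely on any non-resonance gain.

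The key point is to bound the lattice sum on $\Z^d_L$, with $k$ fixed and $k_1\cdot k_2$ in a window of size $T^{-1}$, by the corresponding integral over $\R^d$. The normalization $L^{-d}\sum$ makes it a Riemann sum, which is uniform for $L\ge1$ once the window has width $\gtrsim1$, i.e.\ for $T\le1$. The continuum integral $\int\langle k_1\rangle^{-2s}\langle T(\cdot)\rangle^{-2b}\,dk_1$ then converges by dyadic decomposition precisely when $2s>d-2$. The one-dimensional transversal direction absorbs the modulation weight, and this is where $s>d/2-1$ enters. Checking that the leftover power of $T$ is exactly $T^0$ is the step I expect to be delicate. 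If a net loss $T^{-\delta}$ appears, I would try to trade it for $\ep$ in $b$ before abandoning the scaled spaces.

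\textbf{Conclusion.} Define $\Phi(u)=\eta(t/T)e^{-it\Dl}u_0-i\,\eta(t/T)\int_0^te^{-i(t-t')\Dl}(u^2)(t')\,dt'$. The linear estimates and the bilinear estimate give
\[
\|\Phi u\|\le C_0\|u_0\|_{H^s}+C_1T\|u\|^2,\qquad \|\Phi u-\Phi v\|\le C_1T(\|u\|+\|v\|)\|u-v\|,
\]
with all norms in $X^{s,b}_{(T)}$. Take the ball of radius $2C_0\|u_0\|_{H^s}$ and choose $T=C\|u_0\|_{H^s}^{-1}$ with $C=(8C_0C_1)^{-1}$. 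Then $\Phi$ is a contraction on this ball; if $\|u_0\|_{H^s}$ is so small that this $T$ exceeds $1$, replace $T$ by $1$ and solve on $[-1,1]$, which only makes the smallness condition easier to meet. This gives a solution in $C^0([-T,T],H^s)$ with $C$ independent of $L$. Unconditional uniqueness in $C^0H^s$ is not what the statement needs; I will only claim uniqueness in the restriction space $X^{s,b}_{(T)}$ restricted to $[-T,T]$, which is what Bourgain's method provides.
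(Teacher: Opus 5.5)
The step you flag as delicate is exactly where the argument fails, and the loss is a fixed power of $T$, not an $\varepsilon$.

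Evaluate your supremum at $k=0$, $\tau=0$. The $\tau_1$-integral is $\sim\langle 2T|k_1|^2\rangle^{-2b}$, so you need to bound $L^{-d}\sum_{k_1}\langle k_1\rangle^{-2s}\langle 2T|k_1|^2\rangle^{-2b}\gtrsim\int_{|k_1|\le T^{-1/2}}\langle k_1\rangle^{-2s}\,dk_1\sim T^{-(d-2s)/2}$. This is unbounded as $T\to0$ as soon as $s<d/2$.

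It is not an artifact of Cauchy--Schwarz. On frequencies $|k|\le T^{-1/2}$ the weight $\langle T(\tau+|k|^2)\rangle$ is comparable to $\langle T\tau\rangle$: at time scale $T$ these modes feel no dispersion. So testing your bilinear estimate on $u=v=\eta(t/T)f$ with $\widehat f=\mathbf 1_{|k|\le T^{-1/2}}$ reduces it to $\|f^2\|_{H^s}\lesssim\|f\|_{H^s}^2$ for such $f$. The constant there is $\gtrsim T^{-(d/2-s)/2}$.

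With a constant of that size, your contraction closes only for $T\sim\|u_0\|_{H^s}^{-2/(s-d/2+2)}$. This is the time dictated by the scaling $u\mapsto\lambda^2u(\lambda^2t,\lambda x)$. For large data and $s<d/2$ it is much shorter than $\|u_0\|_{H^s}^{-1}$, whatever $b$ you choose.

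Hence, for large data, your route yields the statement only for $s>d/2$. There it reduces to the fact that $H^s(\T_L^d)$ is an algebra uniformly in $L\ge1$, which is the paper's first, classical argument. The paper's Bourgain-space argument aimed at $s>d/2-1$ is set up only for $T\ge1$.

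Second, the small-data regime is not covered. When $C\|u_0\|_{H^s}^{-1}>1$, solving on $[-1,1]$ does not give a solution on $[-T,T]$, and the statement requires lifespans tending to infinity as $\|u_0\|_{H^s}\to0$. This is precisely the case the paper treats, using the Duhamel estimate for $T\ge1$ (Proposition~\ref{duhamel}) together with Proposition~\ref{multilinBourgain estimate}. On your side, the natural fix is to iterate the unit-time result. Use $\|u(n+1)\|_{H^s}\le\|u(n)\|_{H^s}+C\|u(n)\|_{H^s}^2$, which follows from unitarity of the free flow plus the bilinear bound on unit intervals, for $\sim\|u_0\|_{H^s}^{-1}$ steps.

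Third, even at $T=1$ your justification of uniformity in $L$ does not hold. For $c=0$, the set $\{k_1:|k_1\cdot k_2-c|\le T^{-1}\}$ is a spherical shell of radius $\sim|k|$ and thickness $\sim(T|k|)^{-1}$. This is far below the mesh $L^{-1}$ when $|k|$ is large. The shell can contain a whole slab of a lattice hyperplane: for $k\parallel e_1$ and $c=0$, every $k_1=(0,y)$ with $|y|\le T^{-1/2}$ lies in it. So lattice counts are not dominated by volumes uniformly in $k$. Counting lattice points on such spheres is genuinely number-theoretic, and the paper imports it through Bourgain's bilinear Strichartz estimate (Lemma~\ref{lemmaBourgain}).

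Two minor points. The prefactor in your norm must be $T^{-1/2}$, not $T^{1/2}$, for your three linear estimates to hold. And $\langle k\rangle^s\lesssim\langle k_1\rangle^s+\langle k_2\rangle^s$ fails for $s<0$, which is the relevant range when $d=1$.
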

\begin{proof}
    See Section \ref{basicprooflwp} of the appendix.
\end{proof}
\noi We want to improve the time of existence given by Proposition \ref{basiclwp}.
\noi This will be done with our main result, which is a long time well-posedness result on the torus $\T_L^d$. We split it into two theorems, the first using only trivial counting estimates and the other divisor bounds. \\

\noi In the Theorems below, the $h_{T_{max}}^{s,b}$ spaces are Bourgain spaces defined in Section~\ref{section4}. We recall that $C^\infty\big([0,T_{max}]\times\T_L^d\big)\subset h_{T_{max}}^{s,b} \subset C^0\big([0,T_{max}],H^s(\T_L^d)\big) $,  $b>1/2$. \\

\begin{theorem}\label{theorem1}
 Let $d\ge 1$, $\delta >0$ arbitrarily small, $\gamma > \delta$ arbitrarily large, $L^{-\gamma}\le \al \le L^{-\delta}$ and the well prepared initial data
 \begin{align*}
     u_0(x)=\frac{\al}{L^{d/2}}\sum_{k\in\Z^d}\sqrt{n_{in}(k/L)}g_k(\omega) e^{i2\pi (k/L)\cdot x}
 \end{align*} with $n_{in}\in \mathcal{S}(\R^d)$ non-negative, $(g_k(\omega))_{k\in \Z^d}$ family of i.i.d. centered normalized complex Gaussians. Then we have that there exists $\theta>0$, $K:=K(\theta,\gamma ),c:=c(\theta,\gamma)>0$, $s>d/2$ and $b>1/2$ close to $1/2$ such that for all $L$ large enough, there exists an event $E_L\subset \O$ verifying  $\P(E_L^c)\le Ke^{-cL^\theta}$, such that the solution to \eqref{nls} is defined up to time $T_{max}$ on $E_L$ in $h_{T_{max}}^{s,b}$ where :
 \begin{equation*}
         T_{max} =\al^{-1}L^{-\delta} \, .
 \end{equation*}
 \noi Furthermore, there exists $C>0$ such that for all $L\ge 1$ :
\begin{equation} \label{boundpropagation}
    \|u\|_{L_t^\infty((0,T_{max}), L_x^\infty)} \le  \al L^{\theta+C(b-1/2)} \, .
\end{equation}
\end{theorem}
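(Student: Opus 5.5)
The plan follows the Deng--Hani strategy. We expand the solution in Picard iterates (Feynman trees) up to some order $N$ and treat the remainder by a contraction argument in $h^{s,b}_{T_{max}}$. In the interaction picture the Duhamel formula reads
\[
u = u_{lin} + \mathcal{I}(u,u), \qquad u_{lin}=e^{-it\Dl}u_0 ,
\]
where $\mathcal{I}(v,w)(t)= -i\int_0^t e^{-i(t-s)\Dl}(vw)(s)\,ds$. The iterates are defined by $u_1=u_{lin}$ and $u_n=\sum_{n_1+n_2=n}\mathcal{I}(u_{n_1},u_{n_2})$, so that $u_n$ is a sum over binary trees with $n$ leaves, each leaf carrying a Gaussian $\sqrt{n_{in}(k/L)}g_k$. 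The ansatz is $u=\sum_{n\le N}u_n + R$.

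\textbf{Step 1: bounds on the iterates.} For each tree I estimate the Fourier coefficients $\widehat{u_n}(t,k)$ via Wiener chaos / Gaussian hypercontractivity. It suffices to bound the second moment and then use large deviation estimates, which give bounds valid outside an event of probability $e^{-cL^\theta}$ at the cost of a factor $L^\theta$; a union bound over $k$ (with $n_{in}$ Schwartz, so only $|k|\lesssim L^{1+}$ matter) and over a time grid is harmless. For the second moment, the time integrals yield factors $\min(T,|\Omega|^{-1})$ for each node, where the resonance factor is $\Omega=|k|^2-|k_1|^2-|k_2|^2$ (rescaled by $L^{-2}$). Summing over the lattice points $k_j\in\Z^d_L$ with $|k_j|\lesssim 1$ uses only the trivial counting estimate: the number of lattice points in a unit ball is about $L^d$, compensated by the $L^{-d/2}$ normalisation of each leaf, while the unitarity-type bound controls each quadratic interaction by $\al T$ up to $L^{\theta}$ losses. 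This gives
\[
\|u_n\|_{h^{s,b}} \lesssim \al\,(\al T L^{\theta})^{n-1}
\]
with high probability. For $T=T_{max}=\al^{-1}L^{-\delta}$ we get $\al T L^\theta = L^{\theta-\delta}\ll1$ once $\theta<\delta$, so the iterates decay geometrically. I would avoid using any divisor-type counting here, since this first theorem only allows trivial counting.

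\textbf{Step 2: the remainder.} $R$ solves
\[
R = \mathcal{I}(R,R) + 2\,\mathcal{I}\Big(\sum_{n\le N}u_n,\,R\Big) + \text{err}_N ,
\]
where $\text{err}_N$ collects the iterates of order greater than $N$ not included in the expansion and is of size $\al(\al TL^\theta)^N$. Choosing $N$ large relative to $\gamma/(\delta-\theta)$, this is smaller than any power $L^{-A}$. Together with the deterministic bilinear estimate $\|\mathcal{I}(v,w)\|_{h^{s,b}}\lesssim T^{1-b'}L^{C}\|v\|\|w\|$ (the Bourgain-space estimate behind Proposition~\ref{basiclwp}, with polynomial losses in $L$ from the large torus), the quadratic term is harmless.

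\textbf{Main obstacle: the linear term.} The real difficulty is the linear operator $R\mapsto \mathcal{I}(u_n,R)$: a deterministic bound would lose powers of $L$ and destroy the time gain. I would write it as a random matrix in $k$ and bound its operator norm by a $TT^*$ moment method. Taking high powers $\mathrm{tr}\,(\mathcal{P}\mathcal{P}^*)^m$ reduces the problem to counting again, with the randomness of $u_n$ giving square-root cancellation. The target is an operator norm of order $\al T L^{\theta}\ll1$ off a small event, which makes the map a contraction on a ball of radius $L^{-A}$ in $h^{s,b}_{T_{max}}$, and hence yields existence up to $T_{max}$.

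Finally, \eqref{boundpropagation} follows by combining the iterate bounds with the remainder bound. Each Fourier coefficient of $u_n$ is of size $\al L^{-d/2}L^{\theta}(\al TL^{\theta})^{n-1}$ on $E_L$, and summing over $|k|\lesssim L^{1+}$ with the random phases (Gaussian concentration again) gives $L^\infty_x$ size $\al L^{\theta}$. The $C(b-1/2)$ loss comes from passing from $h^{s,b}$ to $L^\infty_t$ and from the Sobolev embedding $s>d/2$ applied to $R$.
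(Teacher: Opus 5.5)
Your outline is sound, but for Theorem~\ref{theorem1} it is not the argument the paper writes out. It is the route the paper only mentions as an alternative.

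In Subsection~\ref{subsection-1} and the first subsection of Section~\ref{proofbound2}, the paper argues deterministically. It proves $\|J_{\mathcal T}\|_{\tilde h^{s,b}}\le c^n\rho^n\|J_\cdot\|_{\tilde h^{s,b}}^{n+1}$ and $\|\mathcal P\|\le\rho^{n+1}$ by induction, from Proposition~\ref{duhamel} and the algebra property of $\tilde h^{s}$. Randomness enters only through $\|J_\cdot\|_{\tilde h^{s,b}}\les L^{\theta}$.

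Your plan uses Lemma~\ref{hypercontractivity} with trivial counting for the trees, a $TT^*$ bound with trivial counting for the linear operator, and a crude deterministic estimate for the quadratic term. This is the trivial-counting version of Subsections~\ref{subsection0} and~\ref{subsection2}.

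The deterministic route would buy brevity, but your objection to it is justified. For $(u,v)\mapsto\sum_{k_1+k_2=k}u_{k_1}v_{k_2}$, the constant in the $L^{-d}$-weighted $\tilde h^{s}$ norm is of order $L^{d}$, not $L^{d/2}$. Compare the factor $L^{d}\cdot\alpha T_{max}L^{-d/2}$ in the paper's own bound on $\|\chi B(v,v)\|_{\tilde h^{s,0}}$ in Section~\ref{proofofLWP}. So that induction only yields $\alpha T_{max}L^{d/2}$ per node, which is the deterministic time scale. The square-root cancellation in your second-moment computation is exactly what produces $\rho=\alpha T_{max}$.

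Likewise, your random-phase bound on $\|u_n\|_{L^\infty_x}$, with Sobolev embedding used only for the tiny remainder, is the right way to get the $L^\infty$ estimate. The step $\|u\|_{L^\infty_x}\le\|u\|_{h^s}$ in Section~\ref{proofofLWP} loses $L^{d/2}$ in this normalization.

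Two points in your linear-term step need the paper's devices.
First, the operator acts on functions of $(t,k)$, not on $l^2_k$. You must reduce to the $k$-matrices $\chi(\tau,\sigma_2)$ of \eqref{formulaofchi}, by proving a $\tilde h^{0,b}\to\tilde h^{0,1-b}$ bound and interpolating with the lossy $\tilde h^{0,b}\to\tilde h^{0,1}$ bound.
Second, $\mathrm{tr}\,(\chi\chi^*)^m$ over all of $\Z_L^d$ is infinite, and there are infinitely many output frequencies to control. So you bound the entries of $(\chi\chi^*)^D$ uniformly in $k,k'$, apply Schur's test (losing $L^{d}$, which the $2D$-th root absorbs), and get uniformity in $k$ from Lemma~\ref{test}.

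Finally, since you only use trivial counting and already prove $L^\infty_{t,x}$ bounds on the trees, you could bypass $TT^*$ altogether. In $L^\infty_tH^s$ one has $\|\mathcal I(u_n,R)\|\le T\sup_t\|u_nR\|_{H^s}\les TL^{C\theta}\|u_n\|_{L^\infty_{t,x}}\|R\|_{L^\infty_tH^s}$, because $u_n$ is essentially frequency-localized to $|k|\les L^{\theta}$. This gives an operator norm $\les L^{C\theta}\rho^{n}$.
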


\noi For $\al$ decaying faster than any negative power of $L$, we do not get such results but we will see later that for such regimes, the factor we gain for the time of existence compared to what we had with Proposition \ref{basiclwp} is negligible. \\


\noi It is important to note that in Theorem \ref{theorem1} we only make use of trivial counting estimates (in Sections \ref{proofbound1} and \ref{proofbound2}) in the analysis of Feynman tree expansions. Is is worth mentioning that this result can be achieved without using the fine structure of Picard iterates by relying only on known Bourgain analysis (writing the Duhammel formula in the Fourier space) coupled with hypercontractivity estimates. This already gives a longer time than the one given by Proposition \ref{basiclwp} as we shall see below. As we only use trivial counting estimates, this result could be extended to the case where we only have fractional dispersion i.e. the Laplacian is replaced with the fractional Laplacian $(-\Dl)^\beta$ where $0< \beta \le1$.\\

\noi Now we state a stronger result but for shorter range of $\al$ where we make use of number theory, i.e. use divisor bounds.
\begin{theorem}\label{theorem2}
 Let $d\ge 1$, $\delta >0$, $L^{-\frac{d(d+1)}{4d-2}-\delta}\le\al \le L^{-\delta}$ and the well prepared initial data
 \begin{align*}
     u_0(x)=\frac{\al}{L^{d/2}}\sum_{k\in\Z^d}\sqrt{n_{in}(k/L)}g_k(\omega) e^{i2\pi (k/L)\cdot x}
 \end{align*} with $n_{in}\in \mathcal{S}(\R^d)$ non-negative, $(g_k(\omega))_{k\in \Z^d}$ family of i.i.d centered normalized complex Gaussians. Then we have that there exists $\theta>0$, $K(\theta ),c(\theta)>0$, $s>d/2$ and $b>1/2$ close to $1/2$ such that for all $L$ large enough, there exists an event $E_L\subset \O$ verifying  $\P(E_L^c)\le Ke^{-cL^\theta}$, such that the solution to \eqref{nls} is defined up to time $T_{max}$ on $E_L$ in $h_{T_{max}}^{s,b}$ where :
 \begin{equation*}
         T_{max} =\al^{-\frac{2d}{d+1}}L^{-\frac{2\delta d}{d+1}} \, .
 \end{equation*}
 \noi Furthermore, there exists $C>0$ such that for all $L\ge 1$ :
\begin{equation} \label{boundpropagation}
    \|u\|_{L_t^\infty((0,T_{max}), L_x^\infty)} \le  \al L^{\theta+C(b-1/2)} \, .
\end{equation}
\end{theorem}

\noi We have an additional restriction on $\al$ which is $\al \ge L^{-\frac{d(d+1)}{4d-2}-\delta}$. This restriction comes from the fact that estimates in Section \ref{proofbound2} degenerate when $\al$ is smaller. \\

\noi We want to compare the time of existence  given by Theorems \ref{theorem1} and \ref{theorem2} and Proposition~\ref{basiclwp}.
We first notice that $T_{max}$ is greater in Theorem \ref{theorem2} than in Theorem \ref{theorem1}, which is to be expected as its proof uses more refined counting arguments. \\
To compare with Proposition \ref{basiclwp}, we need the following lemma to assess the size of the initial data :

\begin{lemma}\label{time of existence}
Let the setting be the same as in Theorem \ref{theorem1} or \ref{theorem2}. Then for $L$ large enough, on an event of probability $\ges 1-\frac{4c_s}{C_s^2}L^{-d}$, we have that $$\|u_0\|_{H^s}\asymp \al C_s^{1/2}L^{d/2}$$ where
$$C_s:=\int_\R n_{in}(x)\jb{x}^{2s}dx, \ c_s:=\int_\R \jb{x}^{4s}n_{in}(x)^2dx$$ are independent of $L$. 
\end{lemma}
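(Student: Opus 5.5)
The plan is to compute $\|u_0\|_{H^s}^2$ exactly as a weighted sum of the independent variables $|g_k|^2$, control its mean by a Riemann-sum argument, and control its fluctuation by its variance through Chebyshev's inequality. The $L^{-d}$ probability loss is the signature of a second-moment bound, so no hypercontractivity is needed.

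By Plancherel on $\T_L^d$ (with the normalization in which the Fourier modes $e^{i2\pi (k/L)\cdot x}$ have squared $L^2$ norm $L^d$), we get
\[
\|u_0\|_{H^s}^2 = \al^2 \sum_{k\in\Z^d} \jb{k/L}^{2s} n_{in}(k/L)\, |g_k|^2 =: \al^2 X .
\]
If the paper's convention for the $H^s$ norm uses $\jb{k}$ rather than $\jb{k/L}$, or a different normalization of the Fourier transform, the constant changes accordingly. I will take the scaling as dictated by the claimed result $\al^2 C_s L^d$.

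\textbf{Mean.} Since $\E|g_k|^2=1$, we have $\E X=\sum_k \jb{k/L}^{2s}n_{in}(k/L)$. This is a Riemann sum with mesh $1/L$, so $\E X = L^d\big(C_s+O(L^{-1})\big)$, where $C_s$ is the integral of $\jb{x}^{2s} n_{in}$ over $\R^d$ (the statement writes $\R$, which I read as $\R^d$). The error bound uses that $n_{in}\in\mathcal S$, so the function $\jb{x}^{2s}n_{in}$ has integrable derivatives and the Riemann-sum error is $O(L^{d-1})$. Hence for $L$ large, $\E X\in[\tfrac12 C_sL^d,2C_sL^d]$.

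\textbf{Variance.} By independence, and since $\mathrm{Var}|g_k|^2=1$ for a normalized complex Gaussian (as $|g_k|^2$ is exponential with mean $1$),
\[
\mathrm{Var}(X)=\sum_k \jb{k/L}^{4s} n_{in}(k/L)^2 = L^d\big(c_s+o(1)\big)\le 2c_sL^d
\]
for $L$ large, again by a Riemann sum.

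\textbf{Concentration.} Chebyshev gives
\[
\P\big(|X-\E X|\ge \tfrac12 \E X\big)\le \frac{4\,\mathrm{Var}X}{(\E X)^2}\lesssim \frac{4c_s L^d}{C_s^2L^{2d}} = \frac{4c_s}{C_s^2}L^{-d},
\]
up to the harmless factors from the Riemann-sum approximations, which can be absorbed by taking $L$ large. On the complement of this event, $X\asymp C_sL^d$, and therefore $\|u_0\|_{H^s}\asymp \al C_s^{1/2}L^{d/2}$.

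The only mildly delicate step is matching the constant $\frac{4c_s}{C_s^2}$ exactly. To do so, choose the deviation threshold as $\tfrac12 C_sL^d$, and use that the Riemann-sum errors in $\E X$ and $\mathrm{Var}X$ vanish relative to $L^d$; this makes the constant come out as stated for $L$ large enough.
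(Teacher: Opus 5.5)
Your proposal is correct and matches the paper's proof essentially line by line. The paper also writes $\|u_0\|_{H^s}^2=\al^2\sum_{k}\jb{k/L}^{2s}n_{in}(k/L)|g_k|^2$, identifies the mean and (via independence and $\mathrm{Var}|g_k|^2=1$) the variance as Riemann sums asymptotic to $C_sL^d$ and $c_sL^d$, and applies Chebyshev with a deviation threshold of half the mean. One small quibble: your final claim is not quite right, since taking the threshold $\tfrac12 C_sL^d$ still leaves a factor $1+o(1)$ from the variance Riemann sum, so it does not literally give the constant $\frac{4c_s}{C_s^2}$ (a slightly larger threshold would). This is immaterial, because the statement only asks for the probability bound up to constants and the paper itself obtains it only asymptotically.
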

\begin{proof}
    See Section \ref{proofoflemma} of the Appendix.
\end{proof}

\noi This implies that the time of existence given by Proposition \ref{basiclwp} on $\T_L^d$ on an event of probability $\ges 1-\frac{4c_s}{C_s^2}L^{-d}$ is of order $\al^{-1}L^{-d/2}C_s^{-1/2}$ whereas the time of existence given by Theorems~\ref{theorem1} and \ref{theorem2} is of order 
$$\begin{cases}
         \al^{-1}L^{-\delta} \ \ \ \text{if $L^{-\gamma}\le \al \le L^{-\delta}$} \\
         \al^{-\frac{2d}{d+1}}L^{-\frac{2\delta}{1+1/d}} \ \ \ \text{if $L^{-\frac{d(d+1)}{4d-2}-\delta}\le \al  \le L^{-\delta}$}
     \end{cases}$$
on an event of probability $\ge 1-Ke^{-cL^\theta}$. \\

\noi Compared to Proposition~\ref{basiclwp}, we notice that Theorem \ref{theorem1} gives a better time of existence $T_{max}$ by a factor $L^{d/2}C_s^{1/2}$, which is polynomial in $L$. The timescales in Theorem~\ref{theorem2} are always longer than those in Theorem~\ref{theorem1} and are essentially longer by a factor of $\al^{-\frac{1-1/d}{1+1/d}}L^{d/2}$ than the time of Proposition~\ref{basiclwp}.
We see that, in contrast to Theorem \ref{theorem1} and \ref{theorem2}, the timescales in Proposition \ref{basiclwp} degenerate as $s\rightarrow +\infty$ because $C_s \xrightarrow[s\rightarrow +\infty]{}+\infty$. \\

\noi Let us observe that the condition $\al \ge L^{- \gamma}$ in Theorem~\ref{theorem1} is not quite restrictive. Indeed, if $\al$ is smaller than $L^{-\gamma}$ for $\gamma \gg 1$, the gain of $L^{d/2}$ is essentially negligible. \\ 

  \noi We note that the time of existence given by Theorems \ref{theorem1} and \ref{theorem2} is longer than the blow-up time for the ODE $i\dt u=u^2$ with initial condition in $i\R_+$. This means that our result shows any blow up time for equation \eqref{nls} for the well prepared initial data as in Theorem~\ref{theorem1} or \ref{theorem2} will be longer than the trivial blow up time one can find by taking the ODE $\dt u = u^2$. \\

  \noi Equation \eqref{nls} is not a Hamiltonian system and we have no phase invariance. This has an impact, as phase invariance implies the $L^2$ norm conservation. \\

\noi The main difference between our work and \cite{Deng_2021} is that we cannot have a Wick renormalization in our equation \eqref{nls}. This means that degenerate terms will appear in the Feynman tree expansion of the solution. However, the simpler non-linear interaction will show that there are fewer of them. Furthermore, we cannot have self-coupling as our non-linearity contains no conjugates. This simplifies our counting algorithms for the first bound (see Section~~\ref{proofbound1} below). A divisor bound (used in the proof of Proposition \ref{bound2}.) will allow us to gain some power of $\al$ in the time of existence for the regime in Theorem \ref{theorem2}. However, the use of the $TT^*$ argument in Section \ref{proofbound2} causes conjugations to appear, leading to some degenerate affine counting problems related to the conjugated equation of \eqref{nls}, i.e. $i\dt u -\Dl u =|u|^2$. These affine counting problems are what prevents Theorem~\ref{theorem2} from reaching $T_{max}=\al^{-2}$ up to $\eps$. \\

\noi Instead, one could study the spectral radius : $\rho(T)=\lim_{N \rightarrow + \infty} \| T^N\|^{1/N}$. This would be preferable as this would prevent any issues with counting relating to the  $T^*$ operator but this implies tending $N$ to infinity. Thus one cannot take $N$ large enough independent of $L$. Instead $N$ has to depend on $L$ and this introduces many issues we will have to tackle to hopefully reach the kinetic time. \\

\noi The smaller $\delta $ in Theorems \ref{theorem1} and \ref{theorem2}, the longer $T_{max}$ gets. \\

\noi Furthermore, the choice of the quadratic non linearity as $u^2$ is specific in the sense that the coupling of the Feynman trees we use in Section \ref{proofbound1} and Section \ref{proofbound2} will not work the same way for $|u|^2$. The dispersion relation will also lead to worse counting problems, meaning worse bounds for the terms in Sections \ref{proofbound1} and \ref{proofbound2}.\\

\noi The sign of the non-linearity does not play a role in this set of problems because the linear part dominates the nonlinear one. In particular, in the case of the cubic non-linear Schrödinger equation, this domination implies that the potential energy is dominated by the kinetic energy.\\

\noi The bound \eqref{boundpropagation} provides the propagation of the smallness of the $L^\infty$ norm of the initial data thanks to randomness. \\


\noi See Section \ref{rescaledtorus} in the Appendix to see what Theorems \ref{theorem1} and \ref{theorem2} imply on the rescaled torus $\T^d$.

\subsubsection{Wave turbulence limit} In some regimes, we derive a limit Kinetic equation for equation \eqref{nls} (see \eqref{limitequation} for the limit equation) up to the kinetic (or Van Hove) time to the power $d/(d+1)$ with an $\eps$ loss. This derivation will be weak in the sense that we only get the limiting equation in a Taylor expansion, contrary to stronger statements such as \eqref{strongerconvergence} below. \\

\noi First, we remark that if $u$ is the solution to \eqref{nls} given by Theorem \ref{theorem1} or \ref{theorem2}, then $u/\al$ solves the equation with weak non-linearity :
\begin{equation}
   \begin{cases} \label{weaknls}
     i\dt u - \Dl u =  \al u^2  \\
     u(0,x)=\frac{1}{L^{d/2}}\sum_{k\in\Z^d}\sqrt{n_{in}(k/L)}g_k(\omega) e^{i2\pi (k/L)\cdot x}
     \end{cases}
\end{equation}
where $(t,x)\in \R\times \T_L^d$, $ \T_L^d:=\R^d/\Z_L^d$, $\Z_L^d := \frac{1}{L} \Z^d$. \\

 \noi  We recall finding the wave turbulence limit means taking $L \xrightarrow[]{}+\infty$ and $\al \xrightarrow[]{} 0$ (large box and weak non-linearity limit). This derivation only works when the non-linearity is not too weak however (see Remark \ref{CRcondition} below for details). \\

 \noi Here is the statement that we obtain in our context :

\begin{theorem} \label{wavetheorem}
    Under the same assumptions and with the same notations as Theorem \ref{theorem2}, if furthermore $d\ge 5$, $L^{-1-1/d-\delta}\le \al \le L^{- \delta}$ and $T_{kin}:=\al^{-2}$, the solution $u$ verifies for $L^{0+}~\le~t~\le~T_{max}$, for $L$ large enough and for all $ k \in \Z^d$~: 
    \begin{equation} \label{oui}
        \E\big[\left| \Ft_x(u)(t,k/L)\right|^2 \ind_{E_L}\big]=n_{in}(k/L)+\frac{t}{T_{kin}}C(n_{in})(k/L) + r\Big(k,\frac{t}{T_{kin}},L\Big)
    \end{equation}
    where 
    \begin{align*}
        C(f)=C(f,f)(x):=2\int_{\R^d}f(y)f(x-y)\delta(y\cdot(x-y))dy
    \end{align*} 
     and there exists $\beta > 0$ such that for all $t\in[L^{0+},T_{max}]$, $$\lim_{L\xrightarrow[]{}+\infty}\sup_{k \in \Z^d}\left|r\Big(k,\frac{t}{T_{kin}},L\Big)\right|L^\beta =0 \, .$$
\end{theorem}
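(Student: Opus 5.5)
We will follow the scheme of Deng and Hani. Work with the rescaled solution $v=u/\al$ of \eqref{weaknls} in the interaction picture, $a_k(t):=e^{-4\pi^2 i|k/L|^2 t}\Ft_x(v)(t,k/L)$. We expand $a_k$ via Duhamel/Picard iteration into a finite sum of Feynman tree terms $\sum_{n\le N}\mathcal{J}_n$, where $\mathcal{J}_n$ is indexed by binary trees with $n$ branching nodes and is of order $\al^n$, plus a remainder $\mathcal{R}_{N+1}$. On $E_L$ the remainder is controlled by the contraction argument behind Theorem \ref{theorem2}: the linearized operator $\mathcal{L}$ around the truncated expansion has norm $\le L^{-\eps}$ in $h^{s,b}_{T_{max}}$, and $\mathcal{J}_n$ is bounded by $(L^{-\eps'})^n$ times a fixed constant, because $t\le T_{max}=\al^{-2d/(d+1)}L^{-0}$. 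Hence $\|\mathcal{R}_{N+1}\|\le L^{-\eps N}$. Choosing $N$ large, depending on $\delta$, makes its contribution to $\E[|a_k|^2\ind_{E_L}]$ smaller than $L^{-\beta}$. This uses Cauchy–Schwarz together with $\P(E_L^c)\le Ke^{-cL^\theta}$ and polynomial moment bounds on the tree terms, obtained by hypercontractivity. Removing $\ind_{E_L}$ from the main terms costs only $e^{-cL^\theta}$.

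Next we compute $\E[\mathcal{J}_{n}\overline{\mathcal{J}_{n'}}]$ by Wick/Isserlis pairing of the Gaussians at the leaves. Since the nonlinearity has no conjugates, each pair must couple a leaf of the tree with a leaf of the conjugated tree (no self-coupling). This yields couples, and their contributions are bounded by the counting estimates of Sections \ref{proofbound1} and \ref{proofbound2}. We will show that every couple with $n+n'\ge 3$ contributes at most $(t/T_{kin})^{(n+n')/2}L^{-\beta}$, or a sum of such terms bounded by $L^{-\beta}$ for $t\le T_{max}$. The relevant couple bound is $\al^{n+n'}t^{(n+n')/2}$ times counting losses. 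Because $T_{max}\ll T_{kin}$ by a power of $L$, namely $(t/T_{kin})\le \al^{2/(d+1)}L^{0+}$, the higher-order terms are genuinely smaller, provided the counting losses are at most $L^{0+}$. The degenerate terms (zero frequencies, no Wick renormalization) must be checked separately; they are few thanks to the quadratic structure. The term $(n,n')=(0,0)$ gives $n_{in}(k/L)$ exactly. The terms $(1,0)$ and $(0,1)$ vanish, since an odd number of Gaussians has zero expectation.

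The main contribution comes from $(2,0)$, $(0,2)$ and $(1,1)$. Take the $(1,1)$ term, where $\mathcal{J}_1(k)=-i\al L^{-d}\sum_{k_1+k_2=k}\int_0^t e^{2\pi i \Omega s}a_{k_1}a_{k_2}\,ds$ with resonance factor $\Omega\propto 2 (k_1/L)\cdot(k_2/L)$ up to sign. Its expectation is
\begin{equation*}
2\al^2L^{-2d}\sum_{k_1+k_2=k} n_{in}(k_1/L)n_{in}(k_2/L)\Big|\frac{\sin(\pi\Omega t)}{\pi\Omega}\Big|^2 .
\end{equation*}
We apply the standard equidistribution lemma: for $d\ge5$ and $L^{0+}\le t\le L^{2-}$, the sum $L^{-d}\sum$ of a Schwartz function against $|\sin(\pi\Omega t)/\pi\Omega|^2$ equals $t\int f\,\delta(\Omega)+O(tL^{-\beta})$. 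This lemma is proved by Poisson summation/lattice point counting for the (indefinite, nondegenerate) quadratic form $y\cdot(x-y)$, or via the circle method as in Deng–Hani; $d\ge5$ is exactly what makes the error uniform. The $(2,0)+(0,2)$ terms contribute $2\mathrm{Re}$ of an iterated time integral whose main part pairs $k_1$ or $k_2$ with $k$. This forces a degenerate frequency whose weight is $O(\al^2 t\cdot L^{-d})$ or purely oscillatory, hence $o(t/T_{kin})$. Adding everything gives $\frac{t}{T_{kin}}C(n_{in})(k/L)$.

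The main obstacle is this last step, done uniformly in $k$ on the full range $t\le T_{max}$. We must check that $T_{max}\le L^{2-}$; this is where the constraint $\al\ge L^{-1-1/d-\delta}$ enters, since it gives $\al^{-2d/(d+1)}\le L^{2}$ up to $\delta$. We must also get a power-saving error $L^{-\beta}$ in the lattice-point asymptotic for the quadratic form $y\cdot(x-y)$ with $x=k/L$ arbitrary, including near $x=0$, where the form degenerates. To handle this we first rely on the $d\ge5$ circle-method/Gauss-sum bounds, then treat small $|k|$ by the Schwartz decay of $n_{in}$.
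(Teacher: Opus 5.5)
Your plan follows the paper's proof. The paper expands $a=\sum_{n\le N}J_n+R_{N+1}$, controls everything except $J_0,J_1$ on $E_L$ by Propositions \ref{bound1} and \ref{bound2}, and computes $\E|J_1|^2$ exactly. It then passes from the lattice sum to the integral with Lemma \ref{sumtointegral}, which is where $d\ge 5$ and $T_{max}\le L^2$ (i.e.\ $\al\ge L^{-1-1/d-\delta}$) are used, and finishes with the approximate identity for $t\ge L^{0+}$. Your handling of $\ind_{E_L}$ on the main terms, via Cauchy--Schwarz, $\P(E_L^c)\le Ke^{-cL^\theta}$ and hypercontractive moments, is more careful than the paper. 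The paper silently drops the indicator when computing $\E|J_1|^2$ and the cross terms.

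One step is conceptually wrong, though your conclusion there (negligibility) happens to be right. The $(2,0)$ and $(0,2)$ terms have no ``main part''. $J_n$ is a polynomial of degree $n+1$ in the $g_k$ with no conjugates. Hence $\E[J_n\overline{J_{n'}}]$ has $n+1$ factors $g$ and $n'+1$ factors $\overline{g}$, and it vanishes identically for $n\neq n'$ by the invariance $g\mapsto e^{i\phi}g$. For example, $\E[g_{k_1}g_{k_2}g_{k_3}\overline{g_k}]=0$ for all $k_i$. This is exactly the paper's first step, and it already follows from your own ``no self-coupling'' remark. So the whole kinetic term comes from $\E|J_1|^2$, which is why $C(f)$ is a pure gain term. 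By contrast, in cubic NLS it is $J_2\overline{J_0}$ that produces the loss term.

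You should also drop the claimed bound $(t/T_{kin})^{(n+n')/2}L^{-\beta}$ for higher couples. The counting of Section \ref{proofbound1} only gives $|J_n(t,k)|^2\les L^{0+}(\al^2T_{max})^n$ on $E_L$, uniformly in $t\le T_{max}$, with no extra gain. Your fallback is what the paper actually uses, and it is all that is needed, because the error in \eqref{oui} only has to be $O(L^{-\beta})$ in absolute size. Concretely, $\E[|J_n|^2\ind_{E_L}]\les L^{2\theta+2C(b-1/2)}\rho^{2n}=O(L^{-3\delta})$ for $n\ge 2$, and $\|R_{N+1}\|\le\rho^N$ handles the remainder and its cross terms.

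Two smaller slips:
\begin{itemize}
\item The Duhamel prefactor is $\al L^{-d/2}$, so the $(1,1)$ term is $2\al^2L^{-d}\sum$, not $2\al^2L^{-2d}\sum$.
\item The form $y\cdot(x-y)=|x|^2/4-|y-x/2|^2$ is definite with Hessian $-2I$, so the resonant set is a sphere. It is not indefinite and does not degenerate as $x\to 0$. Schwartz decay of $n_{in}$ is what handles large $|k/L|$, not small $|k|$.
\end{itemize}
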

\noi In the above Theorem and in the rest of the paper, $\ft{u}_k$, $\ft{u}(k)$ or $\Ft_x u(t,k)$ denote the Fourier transform in space as defined in Section \ref{section4}. \\

 \noi For an explanation on the definition of the kinetic time, see Remark \ref{kinetictimeexplanation} below in Section \ref{heuristic}. \\


 \noi We notice that this result gives :
 \begin{equation*}
      T_{max} =(T_{kin}L^{-2\delta})^{\frac{d}{d+1}} \ \ \ \text{if $L^{-1-1/d-\delta}\le \al  \le L^{-\delta}$}
 \end{equation*}
 where we recall that $T_{kin}=\al^{-2}$.
 \\

 \noi It follows from Theorem \ref{wavetheorem} that the limit equation is :
    \begin{equation}\label{limitequation}
    \begin{cases}
        \dt f(t,x)=\frac{1}{T_{kin}}C(f,f)(x)=\frac{2}{T_{kin}}\int_{\R^d}f(t,y)f(t,x-y)\delta(y\cdot(x-y))dy \\
        f(0,x)=n_{in}(x)
    \end{cases}
\end{equation}
which we will show to be locally well-posed in some functional space in Proposition \ref{LWP} below for $d\ge 4$. \\

\noi The smaller $\delta $ in Theorem \ref{wavetheorem}, the closer $T_{max}$ is to $T_{kin}^{\frac{d}{d+1}}$. \\

\noi The conditions $d\ge 5$ and $\al \ge L^{-1-1/d-\delta}$ are used to justify the limit from the Riemann sum to the integral up to a small enough error in Lemma \ref{sumtointegral} below. \\

\noi The following properties of the i.i.d. standard complex Gaussian random variables are important in our analysis below : $\E\big[g_{k}\cj{g_{k'}}\big]=\delta_{k=k'}$, $\E\big[g_{k}^n\big]=0$ for any integer $n\ge 1$. For $k\neq k'$, this de-correlation property is propagated by the flow thanks to the translation invariance, namely, we have that for $t$ in the interval of the existence of solution, $\E\big[\ft{u}(t,k)\cj{\ft{u}(t,k')}\big]=0$ if $k\neq k'$. On the other hand, for $k=k'$, the property is not propagated, which justifies the need to study $\E\big[\ft{u}(t,k)\cj{\ft{u}(t,k)}\big]$. \\

\noi  One ideally hopes for convergence in a stronger sense, i.e. a result of the type :
 \begin{equation}\label{strongerconvergence}
     \lim_{L\xrightarrow{}+\infty}\sup_{t\in[0,\delta T_{kin}]}\sup_{k\in \Z_L}\Big|\E\left[|\ft{u}(t,k)|^2\right]-f(t/T_{kin},k)\Big| =0
 \end{equation}
 for some  $\delta>0$ where f is the solution to the limit equation \eqref{limitequation} whose existence we prove locally in time in Section~~\ref{limitequationsection} below. We hope to address this in a future work. \\

 \noi One can see that the previous statement \eqref{strongerconvergence} implies the result \eqref{oui} by doing a Taylor expansion~:
 \begin{align*}
      & \Big|\E\left[|\ft{u}(t,k)|^2\right]-n_{in}(k)-\frac{t}{T_{kin}}C(n_{kin},k)\Big|=\Big|\E\left[|\ft{u}(t,k)|^2\right]-n_{in}(k)-\frac{t}{T_{kin}}\dt f(\cdot/T_{kin},k)(0)\Big|  \\
      & \le \Big|\E\left[|\ft{u}(t,k)|^2\right]-f(t/T_{kin},k)+O\Big(\frac{t}{T_{kin}}\Big)^2\Big| \\
      & \le  \sup_{t\in[0,\delta T_{kin}]}\sup_{k\in \Z_L}\Big|\E\left[|\ft{u}(t,k)|^2\right]-f(t/T_{kin},k)\Big| + O\Big(\frac{t}{T_{kin}}\Big)^2
 \end{align*}

 \noi One could also try and give an empirical measure interpretation of this result. This is what we do heuristically in Section \ref{empirical measure} of the appendix. We did not find such an empirical measure interpretation in the previous literature on kinetic wave limits. Such an interpretation is always involved in the many-particle case.

\subsection{Comments}In this section we will first present some physical background of the Wave turbulence derivation. Then, we will give a non-exhaustive presentation of the literature on earlier closely related works. \\

 \noi Wave turbulence theory is a field that studies nonequilibrium statistical mechanics for many wave systems. The idea is to extend Boltzmann's kinetic theory for dilute gasses, where we go from microscopic laws to derive new equations on a mesoscopic scale. This kinetic theory was studied in the works \cite{RevModPhys.52.569} and \cite{Spohn1991}. We wish to apply it to systems of nonlinear dispersive waves, as was initiated by the work of \cite{article} and developed in \cite{1992kst..book.....Z} and \cite{2011LNP...825.....N} in Physics. This corresponds formally, for a system of particles, to some convergence of an empirical measure to a new measure verifying Boltzmann's equation. In the case of Boltzmann, the number $N$ of particles will go to infinity as their radius $r$ will go to $0$. Of course, the limit cannot be taken freely in $N$ and $r$ at the same time, as these two parameters must depend on each other for the derivation to work. In the case of nonlinear dispersive waves, the size of the torus $L$  and the strength of the nonlinearity $\al$ will play an analogous role to the number of particles and the radius. The goal of the mathematical approach is to find out which relations the parameters $L$ and $\al$ have to verify for this rigorous derivation to hold. One usually chooses $\al=L^{-\gamma}$ with $\gamma >0$ some constant within an acceptable range. The characteristic time of the limit equation will be $T_{kin}$, i.e. not the same as the initial equation~~\eqref{nls}, where $T_{kin}$ is the kinetic time, dependent on $L$ as such : $T_{kin}=\al^{-2}$ (see Remark \ref{kinetictimeexplanation} as to why this is the case). Finally, one wishes to obtain this derivation for $t\le T_{max}$, where ideally $T_{max}\sim T_{kin}$ and an arbitrary multiple of $T_{kin}$. If one can reach $T_{max} \sim T_{kin}$, we speak of derivation up to the kinetic timescale in short time. If one can furthermore take $T_{max}$ an arbitrary multiple of $T_{kin}$, we talk about long time derivation. \\

 \noi For results in the case of dimension $d\ge 3$ up to the kinetic timescale and a quadratic system, see \cite{desuzzoni2025waveturbulencesemilinearkleingordon} with a study of a half-wave Klein-Gordon system, i.e. a quadratic system where the second order resonances prevail and the authors get a strong convergence like \eqref{strongerconvergence}. See \cite{quadraticreal} for the derivation on $\R^d$ in the inhomogeneous setting for a quadratic system. For an extended study of the case of cubic NLS for $d\ge3$ where the authors derive the Wave Kinetic Equation (WKE) up to the kinetic timescale strongly, see, in order : \cite{buckmaster2021onsetwaveturbulencedescription}, \cite{Deng_2021}, \cite{Deng_2023}, \cite{deng2022derivationtionchaoshigherorder}, \cite{deng2023derivationwavekineticequation}. For the case of cubic NLS with a lower dispersion fractional Laplacian in dimension 1, see \cite{vassilev2024onedimensionalwavekinetictheory}. For a case where the first order resonances prevail in the cubic NLS case, see \cite{faou2013weaklynonlinearlargebox} where the authors derive a CR (Continuous resonant) equation (see Remark \ref{CRcondition} for details). For the case of periodic data embedded in the whole plane $\R^2$, see \cite{faou2023scatteringrandomphasewave}. Finally, \cite{arXiv:2311.10082} provides the first long time result for the cubic non-linear Schrödinger equation. The work \cite{wu2025rigorousderivationwavekinetic} gives a derivation of the kinetic equation where the dispersive relation is non-algebraic. The papers \cite{dengwaveone} and \cite{dengwavetwo} use tools developed in  wave turbulence theory to show well-posedness and long time bound results. See, for instance, \cite{bourgain1996invariant}, \cite{burq2008random}, \cite{burq2013probabilistic} and \cite{colliander2012almost} for a use of randomness not to extend the time of existence but to show local well-posedness for low regularities. Finally, see \cite{dymovkuksin1}, \cite{dymovkuksin2}, \cite{Dymovkuksinzakharov} and \cite{dymov} for wave turbulence derivation where the randomness lies in the forcing. \\

\subsection{Organization of the paper}
 \noi The remainder of this paper is organized as follows. Section 2 presents the heuristic computation that yields the wave turbulence derivation. Section 3 gives a local well-posedness result for the limit equation \eqref{limitequation}. Section 4 states the two bounds we have to show in order to prove Theorems \ref{theorem1}, \ref{theorem2} and \ref{wavetheorem}. These two bounds are proved in Section 6 and 7. Section 5 presents the proof of the local well-posedness i.e. Theorems \ref{theorem1} and \ref{theorem2}. Section 8 presents a lemma that allows us to go from a sum to an integral. Finally, Section~9 shows the wave turbulence limit result in Theorem \ref{wavetheorem}. Appendix A gives an empirical measure interpretation to wave turbulence. Appendix B presents the proof of Lemma \ref{time of existence}. Appendix C interprets the results of Theorems~~\ref{theorem1} and \ref{theorem2} on a rescaled torus. Appendix D compiles technical results used in the paper. Appendix E presents elements of the proof of Proposition \ref{basiclwp}.

\subsection{Miscellaneous Notations}\label{notationss}
\begin{itemize}
    \item We define the Japanese brackets : $\jb{x}:=\sqrt{1+4\pi^2|x|^2}$.
    \item We note $A\les B$ if there exists a universal constant $C>0$ (universal in the sense that it does not depend on the relevant parameters) such that $A\le CB$.
    \item We note $A\les_\eps B$ if the constant $C$ depends on $\eps$.
    \item We note $A\asymp B$ if $A=O(B)$ and $B=O(A)$.
    \item We note $l^2_k:=\{f:\Z_L^d \rightarrow \C, \sum_{k\in \Z_L^d}|f(k)|^2 <+\infty\}$.
    \item If $\eta_1 , \eta_2$ are two independent centered normal real Gaussian random variables, we define a centered normal complex Gaussian random variable $g$ by setting $$g:=\frac{1}{\sqrt{2}}(\eta_1+i\eta_2)\, .$$
\end{itemize}

\section{Overview of the proof of Theorem \ref{wavetheorem}} \label{heuristic}

\subsection{Formal derivation}
We hereby present the formal computation that motivates the proof of Theorem \ref{wavetheorem}. We start from equation \eqref{weaknls}, factorize the linear evolution by setting $a:=\Ft_x (e^{it\Dl}u)$ and we look at $a_k$ for $k\in \Z_L^d$ the Fourier coefficients of $a$. They verify the following equation :
\begin{align} \label{firsta_kequation}
    \dt a_k(t)=-i\left( \frac{\al}{L^{d/2}}\right)\sum_{\substack{k_1,k_2 \in \Z_L^d \\k_1+k_2=k}}a_{k_1}(t)a_{k_2}(t)e^{-2\pi it2k_1\cdot k_2} \, .
\end{align}
    Before we perform our integration by parts, we notice that when taking the expectation $\E\left[a_k\cj{a_k}\right]$, if we expand $a_k$ through successive integration by parts, the only non-zero terms are the one where the expansion is of the same order for both $a_k$ and $\cj{a_k}$ as otherwise, there is not the same amount of conjugate and non conjugate Gaussian random variables in the expectation. Because of that, we already know that the computation of order 2 expansions against order 0 is zero. Therefore, we are only interested at the computation of order 1 against order 1 at first. \\

\noi We integrate and perform one integration by parts which gives us our main terms, and then a second one which makes our error terms appear. We will use the abusive notation $\frac{1-e^{isk}}{k}=-is$ if $k=0$. We write :
\begin{align*}
    a_k(t)&=a_k(0)-i\left( \frac{\al}{L^{d/2}}\right)\int_0^t\sum_{\substack{k_1,k_2 \in \Z_L^d \\ k_1+k_2=k}}a_{k_1}(s)a_{k_2}(s)e^{-2\pi is2k_1\cdot k_2}ds\\
    \end{align*}
\noi We perform an integration by part by using the equation verified by $a_k$ \eqref{firsta_kequation} and choose a primitive that cancels at $s=t$ so that the terms $a_k$ that appear are evaluated at $s=0$ where we know their value. We perform a second integration by part with the same primitive and make two additional terms appear : the first one is a trilinear expression in the family $(g_k)_{k\in \Z^d}$ noted $\mathcal{T}$ and the second is a rest we put in a big $O$. Notice that because of equation~~\eqref{firsta_kequation}, each time we perform an IBP, we get and additional power of $\frac{\al}{L^{d/2}}$. Note that because of the choice of our nonlinearity as $u^2$, $\mathcal{T}$ contains no conjugate of Gaussians. We get :
    \begin{align*}
    a_k(t)=&a_k(0)+\left( \frac{\al}{L^{d/2}}\right)\left[\sum_{\substack{k_1,k_2 \in \Z_L^d \\ k_1+k_2=k}}\frac{e^{-2\pi i s2k_1\cdot k_2}-e^{-2\pi i t2k_1\cdot k_2}}{2\pi 2k_1\cdot k_2}a_{k_1}(s)a_{k_2}(s)\right]_0^t +\mathcal{T}\big((g_k)_{k \in \Z^d}\big)+O\left(\frac{\al^3}{L^{3d/2}}\right) \\
    &=a_k(0)-\left( \frac{\al}{L^{d/2}}\right)\sum_{\substack{k_1,k_2 \in \Z_L^d \\ k_1+k_2=k}}\frac{1-e^{-2\pi i t2k_1\cdot k_2}}{2\pi 2k_1\cdot k_2}a_{k_1}(0)a_{k_2}(0)+\mathcal{T}\big((g_k)_{k \in \Z^d}\big) +O\left(\frac{\al^3}{L^{3d/2}}\right) \, .
    \end{align*}
    We then plug in the values for the initial conditions and get :
    \begin{align*}
    a_k(t)&=\sqrt{n_{in}(k)}g_k(\o)-\left( \frac{\al}{L^{d/2}}\right)\sum_{\substack{k_1,k_2 \in \Z_L^d \\ k_1+k_2=k}}\frac{1-e^{-2\pi i t2k_1\cdot k_2}}{2\pi 2k_1\cdot k_2}\sqrt{n_{in}(k_1)n_{in}(k_2)}g_{k_1}(\o)g_{k_2}(\o) \\
    &+\mathcal{T}\big((g_k)_{k \in \Z^d}\big)+O\left(\frac{\al^3}{L^{3d/2}}\right) \, .
\end{align*}
So we get :
\begin{align*}
    &\E\left[a_k\cj{a_k}\right]  =\E[\Lambda\cj{\Lambda}]+O\left(\frac{\al^3}{L^{3d/2}}\right)
    \end{align*}
    where :
    \begin{align*}
    \Lambda&:=\sqrt{n_{in}(k)}g_k(\o)-\left( \frac{\al}{L^{d/2}}\right)\sum_{\substack{k_1,k_2 \in \Z_L^d \\ k_1+k_2=k}}\frac{1-e^{-2\pi i t2k_1\cdot k_2}}{2\pi 2k_1\cdot k_2}\sqrt{n_{in}(k_1)n_{in}(k_2)}g_{k_1}(\o)g_{k_2}(\o)+\mathcal{T}\big((g_k)_{k \in \Z^d} \, .
    \end{align*}
    Now we use the fact there are no conjugate Gaussians in the term $\mathcal{T}\big((g_k)_{k \in \Z^d}\big)$ to justify that
    $$\E[\mathcal{T}\big((g_k)_{k \in \Z^d}\big)\cj{g_k}]=0 \, .$$ \noi This yields :
    \begin{align*}
    &\E\left[a_k\cj{a_k}\right]=n_{in}(k) +\left( \frac{\al}{L^{d/2}}\right)^2\sum_{\substack{k_1,k_2 \in \Z_L^d \\ k_1+k_2=k}}\sum_{\substack{k_1',k_2' \in \Z_L^d \\ k_1'+k_2'=k}}\frac{1-e^{-2\pi i t2k_1\cdot k_2}}{2\pi 2k_1\cdot k_2}\frac{1-e^{+2\pi i t2k_1'\cdot k_2'}}{2\pi 2 k_1'\cdot k_2'} \\
    & \hspace{5.1cm} \times \sqrt{n_{in}(k_1)n_{in}(k_2)}\sqrt{n_{in}(k_1')n_{in}(k_2')}\E\left[g_{k_1}g_{k_2}\cj{g_{k_1'}g_{k_2'}}\right] +O\left(\frac{\al^3}{L^{3d/2}}\right) \, .
\end{align*}
We have either $k_1'=k_1$ and $k_2'=k_2$ or $k_2'=k_1$ and $k_1'=k_2$. We notice that if $k_1=k_2=k_1'=k_2'$ and $k=2k_1$, we have to compute $\E[|g_k|^4]=8$. So we get using the abusive notation $\frac{\sin(kx)}{x}=k$ if $x=0$ :
\begin{align*}
    \E\left[a_k\cj{a_k}\right] & = n_{in}(k)+2\left( \frac{\al}{L^{d/2}}\right)^2\sum_{\substack{k_1,k_2 \in \Z_L^d \\ k_1+k_2=k}}\left|\frac{\sin(\pi t2k_1\cdot k_2)}{\pi 2k_1\cdot k_2}\right|^2n_{in}(k_1)n_{in}(k_2)\\
    &+\ind(\text{k is even})6\left( \frac{\al}{L^{d/2}}\right)^2\left|\frac{\sin(\pi tk\cdot k/2)}{\pi k\cdot k/2}\right|^2n_{in}(k/2)n_{in}(k/2)+O\left(\frac{\al^3}{L^{3d/2}}\right) \, .
\end{align*}
 However, the term $$\ind(\text{k is even})6\left( \frac{\al}{L^{d/2}}\right)^2\left|\frac{\sin(\pi tk\cdot k/2)}{\pi k\cdot k/2}\right|^2n_{in}(k/2)n_{in}(k/2)$$ will tend to $0$ as $L\rightarrow +\infty$ when go from the sum to an integral. \\

\noi We ignore the higher order terms and go from the sum to an integral :
\begin{align*}
    \E\left[a_k\cj{a_k}\right] & \sim n_{in}(k)+2\left( \al\right)^2\int_{\substack{k_1,k_2 \in \R^d \\k_1+k_2=k}}\left|\frac{\sin(\pi t2k_1\cdot k_2)}{\pi 2k_1\cdot k_2}\right|^2n_{in}(k_1)n_{in}(k_2)dk_1 \\
    &\sim n_{in}(k)+2T_{kin}^{-1}\int_{\R^d}\left|\frac{\sin(\pi t2x\cdot (k-x)}{\pi 2x\cdot (k-x)}\right|^2n_{in}(x)n_{in}(k-x)dx \, .
\end{align*}
Finally we use the approximation of a Dirac $t \left|\frac{\sin(tx)}{tx}\right|^2 \xrightarrow[t\rightarrow +\infty]{}\delta(x)$ to get for $t\gg 1$ :
\begin{align*}
    \E\left[a_k\cj{a_k}\right]  &\sim n_{in}(k)+2\frac{t}{T_{kin}}\int_{\R^d}\delta(x\cdot (k-x))n_{in}(x)n_{in}(k-x)dx \, .
\end{align*}
\begin{remark}\label{kinetictimeexplanation}
    We notice that the condition $T_{kin} =\al^{-2}$ appears naturally in the computation and comes from the fact that the dynamics are led by the expectation of order 1 terms against order 1 terms. The choice of the normalization factor in the initial condition also impacts the kinetic time. Here, we have chosen the normalization $L^{-d/2}$ as it naturally makes $L^{-d}$ appear in the computation, which we use to go from the Riemann sum to an integral. However, a consequence of this choice is that $\|u_0\|_{L^2}\sim C L^{d/2}$ and $\|u\|_{L^\infty}\sim C$ in \eqref{weaknls}, meaning we have not renormalized the $L^2$ norm of the initial condition. Making such a choice would lead to a $L^{-d}$ renormalization factor and the above computation would yield $T_{kin}=L^d \al^{-2}$.
\end{remark}
\begin{remark}\label{CRcondition}
    This derivation is justified for a weak nonlinearity, that is $\al \ll 1$. We justify that this nonlinearity should not be too weak. This comes from the condition $T_{max} \le L^2$ which is necessary to justify the sum to integral Lemma \ref{sumtointegral} below. This implies a constraint on $\al$ as $T_{max}$ is a decreasing function in $\al$ (in the case of Theorem \ref{wavetheorem}, $T_{max}=\al^{-\frac{2}{1+1/d}}L^{-\frac{2\delta}{1+1/d}}$). For a generic torus, we have a laxer condition : $T_{max}\le L^d$ (see \cite{Deng_2021}).
\end{remark}
\subsection{Main steps of the convergence analysis}
We expand our solution as $$a=J_0+J_1+...+J_N+R_{N+1}$$ where $J_n$ corresponds to some of the terms obtained through $n$ iterations of the Duhamel formula on the initial data (or integration by parts). We decompose each $J_n$ in a sum of Feynman trees $J_\mathcal{T}$ with each tree tracing the history of where these iterations were performed. We get a priori bounds on these terms in Proposition \ref{bound1}. We use the structure of the equation solved by $R_{N+1}$ to bound an operator in Proposition \ref{bound2} which gives us a priori control on $R_{N+1}$. This shows the existence of the Fourier modes up to $T_{max}$ and allow us to show local well-posedness. We proceed by computing $\E\big[|\ft{u}(t,k)|^2\big]$ whose first two terms give us the limit dynamic but whose other terms we prove negligible in our regime thanks to the aforementioned bounds. We finally go from a sum to the integral using Lemma \ref{sumtointegral} and then to the Dirac limit. 


 \section{Well-posedness of the limit equation}\label{limitequationsection}
In this section, we show that the limit equation \eqref{limitequation} is locally well-posed in some functional space.
 \begin{proposition} \label{LWP}
 Let $d\ge 4$ and $s\ge \frac{d-4}{2}$.
 Consider the following partial differential equation 
\begin{equation}
    \begin{cases}
        \dt f(t,x)=C(f(t),f(t))(x)=\int_{\R^d}f(t,y)f(t,x-y)\delta(y\cdot(x-y))dy \\
        f(0,x)=f_0(x)
    \end{cases}
    \end{equation}
    where $(t,x) \in \R^+ \times \R^d$. \\
      Let $\|f\|_{L_s^2}:=\|\jb{x}^s f\|_{L^2}$ and $L_s^2:=\{f\in L^2, \ \|f\|_{L_s^2} <+\infty\}$. Then there exists $C_s>0$ such that for every $f_0\in L_s^2$, there exists a solution to the equation above in $C^0([0,T],L^2_s)$ where $T=C_s\|f_0\|_{L_s^2}^{-1}$. 
 \end{proposition}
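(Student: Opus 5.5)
The plan is a Picard fixed point argument in $X_T:=C^0([0,T],L^2_s)$ for the integral formulation $f(t)=f_0+\int_0^t C(f(\tau),f(\tau))\,d\tau$. Everything reduces to a single bilinear estimate
\begin{equation*}
\|C(f,g)\|_{L^2_s}\le C_s'\,\|f\|_{L^2_s}\|g\|_{L^2_s},
\end{equation*}
valid for $d\ge 4$ and $s\ge \frac{d-4}{2}$. Given this, the map $\Phi(f)(t):=f_0+\int_0^t C(f,f)\,d\tau$ sends the ball of radius $2\|f_0\|_{L^2_s}$ in $X_T$ into itself and is a contraction there once $T\,C_s'\,4\|f_0\|_{L^2_s}\le 1/2$. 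The difference estimate uses $C(f,f)-C(g,g)=C(f-g,f)+C(g,f-g)$. This yields $T=C_s\|f_0\|_{L^2_s}^{-1}$. Continuity in time is automatic, since the integrand is bounded in $L^2_s$.

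For the bilinear estimate, first rewrite the resonance. Substituting $y=\frac{x}{2}+z$ gives $y\cdot(x-y)=\frac{|x|^2}{4}-|z|^2$. Hence
\begin{equation*}
C(f,g)(x)=\int_{|z|=|x|/2} f\big(\tfrac{x}{2}+z\big)\,g\big(\tfrac{x}{2}-z\big)\,\frac{d\sigma(z)}{2|z|},
\end{equation*}
an integral over the sphere of radius $|x|/2$ centred at $x/2$; these are exactly the $y$ seen from $0$ and $x$ at a right angle (Thales). The surface measure carries the weight $|x|^{-1}$ and the sphere has area $\sim |x|^{d-1}$.

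Next, distribute the weight. On that sphere $|y|^2+|x-y|^2=|x|^2$, so $\jb{x}^s\les \jb{y}^s\jb{x-y}^s$, and the weight can be moved onto $f$ and $g$. It then suffices to bound the unweighted operator from $L^2_s\times L^2_s$ to $L^2$, keeping the extra $\jb{y}^s\jb{x-y}^s$ in reserve. For this I would apply Cauchy--Schwarz on each sphere:
\begin{equation*}
|C(f,g)(x)|^2\le \Big(\int |F(y)|^2|G(x-y)|^2\,\delta(y\cdot(x-y))\,dy\Big)\Big(\int \jb{y}^{-2s}\jb{x-y}^{-2s}\,\delta(y\cdot(x-y))\,dy\Big),
\end{equation*}
where $F=\jb{\cdot}^s f$ and $G=\jb{\cdot}^s g$. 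The second factor is about $|x|^{d-2}\jb{x}^{-2s}$ for large $|x|$, since $\jb{y}\jb{x-y}\gtrsim \jb{x}$ on the sphere. It is bounded uniformly in $x$ as soon as $2s\ge d-2$, and it is integrable near $x=0$ when $d\ge 2$.

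The step I expect to be the main obstacle is integrating the first factor in $x$. By Fubini this gives $\iint |F(y)|^2|G(w)|^2\,\delta(y\cdot w)\,dy\,dw$ (with $w=x-y$), and this is not bounded by $\|F\|_2^2\|G\|_2^2$, because $\delta(y\cdot w)$ restricts $w$ to a hyperplane. The remedy is to split the weight asymmetrically rather than evenly. One needs a restriction-type (trace) bound for $G$ on the hyperplane $y^\perp$. I would use the extra weight, Sobolev-style, to control $\int_{y^\perp}|G|^2\,d\sigma$ by $|y|^{-1}$ times a weighted $L^2$ norm, averaging over directions of $y$ with polar coordinates. The integral $\int_{S^{d-1}}\int_{\omega^\perp}|G|^2\,d\sigma\,d\omega$ is the Radon-transform-type identity $\int |G(w)|^2|w|^{-1}\,dw$ up to constants. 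Together with Hardy's inequality in dimension $d\ge 4$, which handles $|w|^{-1}$ and $|y|^{-1}$ singularities, I expect this to close with the threshold $s\ge\frac{d-4}{2}$. That threshold is what the count above (two powers saved from the dimension by the $\delta$ and the $|x|^{-1}$ Jacobian) suggests; the precise bookkeeping of exponents is where I would spend the effort.
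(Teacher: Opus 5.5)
Your fixed-point argument is fine and matches the paper's. The gap is in the bilinear estimate, which carries all the content, and the route you sketch for it cannot work.

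The pointwise Cauchy--Schwarz on each sphere reduces matters to $\iint |F(y)|^2|G(w)|^2\delta(y\cdot w)\,dy\,dw$. This quantity is not controlled by any weighted $L^2$ norms of $F,G$. Take, with $y=(y_1,y')$,
\begin{equation*}
F=\varepsilon^{-\frac{d-1}{2}}\mathbf{1}_{\{1\le y_1\le 2,\ |y'|\le\varepsilon\}},\qquad G=\varepsilon^{-\frac12}\mathbf{1}_{\{|w_1|\le\varepsilon,\ 1/2\le |w'|\le 1\}} .
\end{equation*}
All the relevant $y$, $w$ and $x=y+w$ (with $y\perp w$) lie in a fixed compact set away from the origin. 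Hence every weighted norm of $F$ and $G$ is $\sim 1$, for any power and any asymmetric split of the weight.

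However, for each $y$ in the tube, $y\cdot w=0$ forces $|w_1|\le\varepsilon|w'|/y_1\le\varepsilon$. So the hyperplane $y^\perp$ stays inside the slab over the whole annulus, which gives $\int|G(w)|^2\delta(y\cdot w)\,dw\sim\varepsilon^{-1}$ and a double integral $\sim\varepsilon^{-1}$. Your second Cauchy--Schwarz factor is $\sim1$ at the relevant $x$, so even $\int A(x)B(x)\,dx$ blows up. Yet the actual size is $\|C(f,g)\|_{L^2}\sim\varepsilon^{(d-2)/2}$. The product $f(y)g(x-y)$ lives on a cap of area $\sim\varepsilon^{d-1}$, and Cauchy--Schwarz against the whole sphere discards exactly this.

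The repairs you mention do not touch this example. A trace bound for $\int_{y^\perp}|G|^2\,d\sigma$ needs regularity of $G$ transverse to the hyperplane, and $L^2_s$ measures decay, not smoothness; Hardy's inequality likewise needs derivatives. The Radon identity controls only the average over $\omega$ of $\int_{\omega^\perp}|G|^2$. It does not control the pairing with the angular profile $\omega\mapsto\int_0^\infty|F(\rho\omega)|^2\rho^{d-2}\,d\rho$, which can concentrate, and that concentration is precisely what the example uses. Separately, your own count for the second factor already requires $2s\ge d-2$, which is above the target $s\ge\frac{d-4}{2}$.

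What is missing is an $L^2$-orthogonality (dispersive) input, which the paper obtains on the Fourier side. Write $\delta(\Omega)=c\int_\R e^{i\tau\Omega}\,d\tau$ and $y\cdot(x-y)=\frac12(|x|^2-|y|^2-|x-y|^2)$. Then $C(f,g)(x)=c\int Tf(u,\tau)\,Tg(u,\tau)\,e^{ix\cdot u}e^{i\tau|x|^2}\,du\,d\tau$, where $Tf(\cdot,\tau)$ is a free Schr\"odinger evolution of $\mathcal{F}^{-1}f$. The weight $\langle x\rangle^s$ thereby becomes $\langle D\rangle^s$, which supplies the regularity your approach lacks.

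Minkowski in $\tau$ and Plancherel give $\|C(f,g)\|_{L^2_s}\les\int_\R\|Tf(\cdot,\tau)Tg(\cdot,\tau)\|_{H^s}\,d\tau$. The paper then applies fractional Leibniz and H\"older in $\tau$. It uses the endpoint Strichartz estimate in $L^2_\tau L^{2d/(d-2)}_x$ and the Sobolev embedding $W^{\frac{d-4}{2},\frac{2d}{d-2}}\subset L^d$; this is where $d\ge4$ enters. The result is $\|C(f,g)\|_{L^2_s}\les\|f\|_{L^2_s}\|g\|_{L^2_{(d-4)/2}}+\|g\|_{L^2_s}\|f\|_{L^2_{(d-4)/2}}$.

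Your scaling heuristic correctly predicts the critical total weight $\frac{d-4}{2}$. Proving the estimate, though, needs a Strichartz-type step of this kind, which a pointwise Cauchy--Schwarz on the spheres cannot provide.
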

 \begin{proof} In the rest of the paper, we will denote by $\delta_x(.)$ the Dirac distribution supported on $x\in \R$. When $x=0$, we will use the notation $\delta$. We use the ideas of \cite{germain2018optimallocalwellposednesstheory}.
     We rewrite the equation using the tempered distribution equality
     \begin{align*}
         \delta_0(p)dp=\frac{1}{2\pi}\int_{\R^d} e^{ip\cdot x}dxdp \ \text{in $\mathcal{S'}(\R^d)$}\, .
     \end{align*}
     We then define the operator
     $$C:(f,g)\in \mathcal{S}(\R^d)^2 \mapsto \Big(x \in \R^d \mapsto \int_{\R^d}f(y)g(x-y)\delta(y\cdot(x-y))dy \in \mathcal{S}'(\R^d) \Big) \, .$$
      \noi For $f,g\in \mathcal{S}(\R^d)$ the following equality in the space of tempered distribution $\mathcal{S}'(\R^d)$ holds :
     \begin{align*}
         C(f,g)(x)&=\int_{\R^d}f(y)g(x-y)\delta(y\cdot(x-y))dy \\
         &=c\int_{\R^d\times \R^d \times  \R \times \R^d}f(y)g(z)e^{i\big[|x|^2-|z|^2- |y|^2\big]\tau}e^{i(x-z-y)\cdot u}dydzd\tau du 
     \end{align*}
     where $c\in \R$ is an irrelevant constant. Now we define :
     \begin{align*}
         Tf(p,\tau):=\int_{\R^d}f(q)e^{-ip\cdot q}e^{-i|q|^2\tau}dq =c e^{i\tau \Dl}\left( \mathcal{F}^{-1}(f)\right)(-p)
     \end{align*}
     where $\Ft$ is the Fourier transform on $\R$ as defined in Section \ref{section4}. \\
     We get :
     \begin{align*}
         C(f,g)(x)=c\int_{\R^d \times \R}Tf(u,\tau)Tg(u,\tau)e^{ix\cdot u}e^{i|x|^2\tau}du d \tau \, .
     \end{align*}
     We bound $C(f,g)$ in $L^2_s$ using Hölder inequality and the fractional Leibniz rule established in Theorem A.8 of \cite{KPVfractionalLeibniz}~:
     \begin{align*}
         \big\|\jb{x}^{s} C(f,g)(x)\big\|_{L^2} & \les \Big\|\jb{x}^s\int_\R \mathcal{F}_u^{-1} \left(Tf(u,\tau)Tg(u,\tau)\right)(x,\tau)e^{i\tau|x|^2}d\tau \Big\|_{L^2} \les \int_{\R}\big\|Tf(.,\tau)Tg(.,\tau) \big\|_{H^s} d\tau \\
         &\les \big\|\jb{D_x}^{s}Tf(.,\tau)\big\|_{L_\tau^2L_x^{\frac{2d}{d-2}}}\big\|Tg(.,\tau)\big\|_{L_\tau^2L_x^{d}} +\big\|\jb{D_x}^{s}Tg(.,\tau)\big\|_{L_\tau^2L_x^{\frac{2d}{d-2}}}\big\|Tf(.,\tau)\big\|_{L_\tau^2L_x^{d}} \, .
\end{align*}
Because we have a quadratic non-linearity, we have to use the endpoint Strichartz estimate established in \cite{endpointstrichartz}. This is crucial as for lower order non-linearities, we cannot run the same argument. Combining that with a Sobolev injection, we get :
\begin{align*}
    \big\|\jb{x}^{s} C(f,g)(x)\big\|_{L^2} & \les \big\|\jb{D_x}^sTf(.,0)\big\|_{L_x^{2}}\Big\|\jb{D_x}^{\frac{d-4}{2}}Tg(.,\tau)\Big\|_{L_\tau^2L_x^{\frac{2d}{d-2}}} \\
    &+ \big\|\jb{D_x}^sTg(.,0)\big\|_{L_x^{2}}\Big\|\jb{D_x}^{\frac{d-4}{2}}Tf(.,\tau)\Big\|_{L_\tau^2L_x^{\frac{2d}{d-2}}}  \\
    & \les \|f\|_{L_s^{2}}\|g\|_{L_\frac{d-4}{2}^{2}} + \|g\|_{L_s^{2}}\|f\|_{L_\frac{d-4}{2}^{2}} \les \|f\|_{L_s^{2}}\|g\|_{L_s^{2}} \ \ \ \textbf{for $s\ge d/2-2$ \, .} 
\end{align*}
We thus have proved :
\begin{equation*}
    \| C(f,g)\|_{L_s^2} \les \|f\|_{L_s^{2}}\|g\|_{L_s^{2}}
\end{equation*}
for $f$ a Schwartz function. By denseness one can extend the bilinear form $C$ on $(L_s^2)^2$, such that this inequality holds for $f,g\in L_s^2$. \\
Now, we can run a classical fixed point argument and get local well-posedness.\\
\noi We define the functional :
$$\Phi:f\in C^0\big((0,T),L_s^2\big) \mapsto \Big( t\mapsto f_0+ \int_0^tC(f(\tau),f(\tau))d\tau \Big) \, .$$
Let $Z:=\{f\in C^0\big((0,T)L_s^2\big), \ \|f\|_{C^0((0,T),L_s^2)}\le 2\|f_0\|_{L_s^2}\}$.
We want to show that $\Phi$ is a contraction on the Banach space  $Z$ for small times.\\
We first show boundedness. Let $f\in Z$.
\begin{align*}
    \|\Phi(f)\|_{C^0\big((0,T)L_s^2\big)}& \le \|f_0\|_{L_s^2}+\|\int_0^tC(f(\tau),f(\tau))d\tau \|_{C^0 \big((0,T),L_s^2\big)} \\
    & \le \|f_0\|_{L_s^2}+T\|C(f(t),f(t)) \|_{C^0 \big((0,T),L_s^2\big)} \\
    &\le  \|f_0\|_{L_s^2}+TC_s\|f \|_{C^0 \big((0,T),L_s^2\big)}^2 \\
    & \le 2\|f_0\|_{L_s^2}
\end{align*}
if $T\le (4C_s \|f_0\|_{L_s^2})^{-1}$. \\
Now we show that $\Phi$ is a contraction. Let $f,g \in Z$.
\begin{align*}
    \|\Phi(f)-\Phi(g)\|_{C^0 \big((0,T),L_s^2\big)}& \le \|\int_0^tC(f(\tau),f(\tau)-g(\tau))d\tau \|_{C^0 \big((0,T),L_s^2\big)} \\
    & \ \ \ \ + \|\int_0^tC(g(\tau),f(\tau)-g(\tau))d\tau \|_{C^0 \big((0,T),L_s^2\big)} \\
    & \le TC_s\Big(\|f \|_{C^0 \big((0,T),L_s^2\big)}+\|g \|_{C^0 \big((0,T),L_s^2\big)}\Big)\|f-g \|_{C^0 \big((0,T),L_s^2\big)} \\
    &\le 4TC_s \|f_0\|_{L_s^2}\|f-g \|_{C^0 \big((0,T),L_s^2\big)} \\
    &\le \frac{1}{2}\|f-g \|_{C^0 \big((0,T),L_s^2\big)}
\end{align*}
if $T\le (8C_s \|f_0\|_{L_s^2})^{-1}$.
So we can set $T:=(8C_s \|f_0\|_{L_s^2})^{-1}$ and apply Banach's fixed point Theorem to $\Phi$.
 \end{proof}
\begin{remark}
    We can apply the ideas of the proof of Proposition \ref{LWP} to Lemma 4.1 of \cite{germain2018optimallocalwellposednesstheory} and extend this lemma to $s\ge \frac{d-2}{2}$ (i.e. $s\ge1/2$ for $d=3$) instead of $s>\frac{d-2}{2}$.
\end{remark}

\section{Expansion of the solution and statement of two bounds in Bourgain spaces}\label{section4}

\subsection{Expansion of the solution}

\noi We define for $u:\T_L^d\rightarrow \C$ its Fourier coefficients by setting for $k \in \Z_L^d$ :
\begin{align*}
    \Ft_x(u)(k)=\ft{u}_k=\ft{u}(k):=\frac{1}{L^{d/2}}\int_{\T_L^d}u(x)e^{-i2\pi k\cdot x}dx
\end{align*} 
and $u$ can be reconstructed from $\ft{u}(k)$ by the formula
\begin{align*}
     u(x)=\frac{1}{L^{d/2}}\sum_{k\in\Z_L^d}\ft{u}(k)e^{i2\pi k\cdot x} \, .
\end{align*}
\noi We define the Fourier transform on $\R$ or temporal Fourier transform and recall its associated inverse :
\begin{equation*}
    \Ft_t(f)(\xi)=\Tilde{f}(\xi):=\int_{\R}e^{-2\pi i x \xi}f(x)dx, \ \ \ f(x)=\int_{\R}e^{2\pi i x \xi}\Tilde{f}(\xi)d\xi \, .
\end{equation*}
We will denote by $\Ft_{t,x}(u)$ or $\Tilde{\ft{u}}$ the space time Fourier transform of $u$. \\

\noi We define the Bourgain spaces $h^{s,b}$ endowed with the norm :
\begin{equation*}
    \|u\|_{h^{s,b}}^2:=L^{-d}\sum_{k\in \Z_L^d}\int_\R \jb{k}^{2s}\jb{\tau}^{2b}|\Ft_{t,x}u(\tau,k)|^2d\tau \, .
\end{equation*}

\noi We will omit the dependence on $L$ of all functional spaces. \\
\noi We define the restriction Bourgain spaces $h_{T}^{s,b}$ for $T>0$ by the norm :
\begin{align*}
    \|u\|_{h_{T}^{s,b}} := \inf\{\|f\|_{h^{s,b}}, \ f_{\big|]0,T[}=u_{\big|]0,T[}\} \, .
\end{align*}
\noi We first do the change of variables 
$$u:=u/\al \, .$$ 
This will allow us to treat all three Theorems \ref{theorem1}, \ref{theorem2} and \ref{wavetheorem} at the same time. This change of variable does not impact the time of existence. After this change of variable, equation \eqref{nls} translates into equation \eqref{weaknls}. We will now study equation \eqref{weaknls} and remember we have to multiply its solution by $\al$ to find the solution of equation \eqref{nls}. \\

\noi We factorize the linear evolution of equation \eqref{weaknls}, do a rescaling in time by setting $s=t/T_{max}$ and look at the Fourier modes. Formally, if $u$ is a solution to equation \eqref{weaknls}, let $k \in \Z_L^d$, we define $a:\R \times \Z_L^d \rightarrow \C$ by : $$a_k(s):=\Ft_x (e^{it\Dl}u)(sT_{max},k) \, .$$ 
Then $a_k$ verifies the following equation, which we will study now  :
\begin{align}\label{equationak}
    i\dot{a_k}(s)=\left(\frac{\al T_{max}}{L^{d/2}}\right)\sum_{\substack{k_1,k_2 \in \Z_L^d \\ k_1+k_2=k}}a_{k_1}(s)a_{k_2}(s)e^{-2\pi i s T_{max} 2 k_1 \cdot k_2} \, .
\end{align}
We remark that all the degenerate terms i.e. the terms where $k_1\cdot k_2 =0$ matter. This is not the case for cubic Schrödinger where a Wick renormalization is done first to eliminate some degenerate terms. In our case however, the degenerate terms will be negligible as $L\xrightarrow[]{}+\infty$. Furthermore, the resonant equation corresponding to these degenerate terms is integrable and does not blow-up almost surely with the initial data as in Theorems \ref{theorem1} and \ref{theorem2}.   \\

\noi We introduce the equivalent of the Bourgain spaces $h^{s,b}$ on the Fourier side. These spaces are subspaces of $L_t^2l_{k}^2$ and are endowed with the norm :
\begin{equation*}
    \|u\|_{\Tilde{h}^{s,b}}^2:=L^{-d}\sum_{k\in \Z_L^d}\int_\R \jb{k}^{2s}\jb{\tau}^{2b}|\Ft_{t}u(\tau,k)|^2d\tau \, .
\end{equation*}
We notice that :
\begin{equation*}
    \|\Ft_x(u)\|_{\Tilde{h}^{s,b}}^2=\|u\|_{h^{s,b}}^2 \, .
\end{equation*}
\noi We define the restriction Bourgain spaces $\Tilde{h}_{T}^{s,b}$ for $T>0$ by the norm :
\begin{align*}
    \|u\|_{\Tilde{h}_{T}^{s,b}} := \inf\{\|f\|_{\Tilde{h}^{s,b}}, \ f_{\big|]0,T[}=u_{\big|]0,T[}\} \, .
\end{align*}
\noi We define the bilinear form that corresponds to the derivative of the Fourier mode $a_k$ in~~\eqref{equationak} by defining for $u, \ v \in \Tilde{h}^{s,b}$ :
\begin{align*}
    B(u,v)(s,k):=\left(\frac{-i \al T_{max}}{L^{d/2}}\right)\sum_{\substack{k_1,k_2 \in \Z_L^d \\ k_1+k_2=k}}u_{k_1}(s)v_{k_2}(s)e^{-2\pi i s 2T_{max} k_1 \cdot k_2} \in \Tilde{h}^{s,b} \, .
\end{align*}
In order to do our fix point Theorem, we will integrate this bilinear form in time :
$$\int_0^tB(u,v)(s,k)ds \, .$$
\noi We have to truncate this integral in time as we want to work on $[0,1]$. We use a smooth cutoff function $\chi \in C_c^\infty(\R)$ such that $\chi \equiv 1$ on $[0,1]$. We will need  a technical lemma to compute the Fourier transform in time of the truncated integral. We use a Lemma proved in \cite{hilbertransform} using the Hilbert transform. We shall give another proof inspired by \cite{Ginibre}.
\begin{lemma} \label{FT} 
    Define $IF(t):=\chi(t)\int_0^t\chi(t')F(t')dt'$ for $F\in \mathcal{S}(\R)$. Then we have :
    \begin{align*}
        \Ft \big(IF\big)(\tau)=\int_\R(I_0+I_1)(\tau,\sigma)\Ft(F)(\sigma)d\sigma \, .
    \end{align*}
    And we have for $a\in \N^2$, $A>0$ :
    \begin{align*}
        \left|\partial_{\tau,\sigma}^a I_d(\tau,\sigma) \right| \les_{a,A} \frac{1}{\jb{\tau-j\sigma}^A} \frac{1}{\jb{\sigma}} \ \ \text{for } j=0,1. 
    \end{align*}
\end{lemma}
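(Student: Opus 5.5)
Write $\hat F=\Ft(F)$ and use Fourier inversion, $F(t')=\int_\R e^{2\pi i t'\sigma}\hat F(\sigma)\,d\sigma$. Substituting this into $\int_0^t \chi(t')F(t')\,dt'$ and applying Fubini (legitimate since $F\in\mathcal S$) gives
\begin{equation*}
IF(t)=\int_\R \chi(t)\Big(\int_0^t \chi(t')e^{2\pi i t'\sigma}dt'\Big)\hat F(\sigma)\,d\sigma .
\end{equation*}
It is therefore enough to understand the kernel $K_\sigma(t):=\chi(t)\int_0^t\chi(t')e^{2\pi i t'\sigma}dt'$ and its Fourier transform in $t$. The Fourier transform of $IF$ is then $\int_\R \Ft(K_\sigma)(\tau)\hat F(\sigma)\,d\sigma$.

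We split according to $\jb{\sigma}$, using a smooth partition of unity $1=\psi(\sigma)+(1-\psi(\sigma))$, with $\psi$ supported in $|\sigma|\le 2$ and equal to $1$ on $|\sigma|\le 1$.
\begin{itemize}
\item \emph{Large $|\sigma|$.} Integrate by parts once in $t'$. With the primitive of $e^{2\pi i t'\sigma}$, this gives
\begin{equation*}
K_\sigma(t)=\frac{\chi(t)^2e^{2\pi i t\sigma}-\chi(t)\chi(0)}{2\pi i\sigma}-\frac{\chi(t)}{2\pi i\sigma}\int_0^t\chi'(t')e^{2\pi i t'\sigma}dt' .
\end{equation*}
 \item The first term has Fourier transform $\widehat{\chi^2}(\tau-\sigma)/(2\pi i\sigma)$. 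This is the $j=1$ piece: a Schwartz function of $\tau-\sigma$ times $\jb{\sigma}^{-1}$.
 \item The term $-\chi(t)\chi(0)/(2\pi i\sigma)$ gives $\hat\chi(\tau)/\sigma$, which is a $j=0$ piece.
 \item The last term is again of the form $\chi(t)\int_0^t\tilde\chi(t')e^{2\pi it'\sigma}dt'$, but with an extra factor $\sigma^{-1}$. Iterating the integration by parts, each step produces either $j=0$ or $j=1$ terms with higher inverse powers of $\sigma$, plus a remainder $O(\jb{\sigma}^{-M})$. That remainder is a smooth compactly supported function of $t$, whose $t$-derivatives are all bounded by $\jb{\sigma}^{-M+N}$. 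For $M$ large its Fourier transform is bounded by $\jb{\tau}^{-A}\jb{\sigma}^{-A-1}$, and since $\jb{\tau-\sigma}\lesssim \jb{\tau}\jb{\sigma}$ this is absorbed into either form.
\item \emph{Small $|\sigma|$.} Here $K_\sigma(t)\psi(\sigma)$ is smooth in $(t,\sigma)$ with compact $t$-support. Its $t$-Fourier transform is therefore $O(\jb{\tau}^{-A})$ uniformly in $\sigma$. On this region $\jb{\sigma}\sim 1$, so this contributes to $I_0$.
\end{itemize}

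We define $I_0$ as the sum of all the $\hat\chi$-type pieces and $I_1$ as the sum of all the $\widehat{\chi\cdot}(\tau-\sigma)$ pieces.

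For the derivative bounds, note that every piece is a finite combination of terms of the form
\begin{equation*}
\hat\phi(\tau)\,\frac{1-\psi(\sigma)}{\sigma^m}\qquad\text{or}\qquad \hat\phi(\tau-\sigma)\,\frac{1-\psi(\sigma)}{\sigma^m},
\end{equation*}
with $\phi\in C_c^\infty$ and $m\ge1$, plus the smooth remainders. Differentiating in $\tau$ or $\sigma$ hits either the Schwartz factor, which stays Schwartz, or the smooth symbol $(1-\psi)\sigma^{-m}$, which only improves decay. This gives $|\partial^a I_j|\lesssim_{a,A}\jb{\tau-j\sigma}^{-A}\jb{\sigma}^{-1}$.

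The main obstacle is the bookkeeping of the iterated integration by parts near $t'=0$ and $t'=t$. In particular, one has to check that the boundary term at $t'=0$ never produces a $\tau$-dependence coupled to $\sigma$ other than $\tau$ itself or $\tau-\sigma$. A cleaner alternative, which we would try first, is to avoid iteration: write
\begin{equation*}
\int_0^t\chi e^{2\pi i t'\sigma}\,dt'=\frac{e^{2\pi it\sigma}-1}{2\pi i\sigma}\ \text{(on the support of $\chi$ near $[0,1]$)} + \text{smooth corrections},
\end{equation*}
since $\chi\equiv1$ on $[0,1]$. Then treat the difference quotient $(e^{2\pi it\sigma}-1)/\sigma$ for $|\sigma|\lesssim1$ by Taylor expansion, as in Ginibre's proof.
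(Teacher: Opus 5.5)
Your argument is correct, but it takes a genuinely different route from the paper's. The paper starts from the principal-value identity $\int_0^t u=\int\frac{e^{it\tau}-1}{i\tau}\Ft(u)(\tau)\,d\tau$ and writes $\Ft(IF)$ as a convolution against $\hat\chi$. This yields closed-form kernels (up to $2\pi$ normalizations)
\[
I_1(\tau,\sigma)=\Big(\mathrm{p.v.}\tfrac{1}{i\eta},\ \hat\chi(\eta-\sigma)\hat\chi(\tau-\eta)\Big)_\eta ,\qquad I_0(\tau,\sigma)=-\hat\chi(\tau)\Big(\mathrm{p.v.}\tfrac{1}{i\eta},\ \hat\chi(\eta-\sigma)\Big)_\eta ,
\]
which the paper then estimates as Hilbert-transform-type integrals by comparing $|\eta|$ with $|\sigma|/2$. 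You instead Fourier transform the explicit kernel $K_\sigma(t)$ directly, using smoothness for $|\sigma|\le 2$ and repeated integration by parts for $|\sigma|\ge1$.

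Your route is more elementary. It needs no distributional Fubini and no cancellation argument for the principal value near $\eta=0$. The localizations $\tau\approx0$, $\tau\approx\sigma$ and the gain $\jb{\sigma}^{-1}$ are read off from explicit boundary terms, and the derivative bounds are immediate. What the paper's formulas give for free is the product structure $I_0(\tau,\sigma)=\hat\chi(\tau)c(\sigma)$: the part of $IF$ coming from $I_0$ is a constant multiple of $\chi(t)$. Lemma~\ref{FT2} exploits exactly this (via $IF(0)=0$) to control $IF$ by $\mathcal{I}_1F$.

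Two bookkeeping points. First, the ``obstacle'' you flag is not one. The boundary terms at $t'=0$ are $\chi(t)\chi^{(m)}(0)(2\pi i\sigma)^{-m-1}$, and $\chi^{(m)}(0)=0$ for $m\ge1$ because $\chi\equiv1$ on $[0,1]$, so only the first one survives. This is worth stating explicitly. You choose the number $M$ of integrations by parts depending on $A$ and $a$, whereas the lemma needs a single pair $(I_0,I_1)$ valid for all $A$.

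Second, the cleanest way to write the decomposition is to set $I_0(\tau,\sigma):=-\chi(0)\hat\chi(\tau)\frac{1-\psi(\sigma)}{2\pi i\sigma}$ and $I_1:=\Ft_t(K_\sigma)(\tau)-I_0$. The expansion to order $M(A,a)$ is then used only to prove the bound for $I_1$. The small-$|\sigma|$ piece and the remainders do satisfy the $j=1$ bound, since $\jb{\tau-\sigma}\sim\jb{\tau}$ for $|\sigma|\le2$ and $\jb{\tau}^{-A}\jb{\sigma}^{-A-1}\les\jb{\tau-\sigma}^{-A}\jb{\sigma}^{-1}$. Putting these pieces into $I_1$ (rather than into $I_0$, as you do for the small-$|\sigma|$ piece) also restores the structure of $I_0$ needed in Lemma~\ref{FT2}.
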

\begin{proof}
    We work in the space of tempered distributions and use that for $u\in \mathcal{S}(\R)$
    \begin{align*}
        \int_0^t u(t')dt'= \int_\R \frac{e^{it\tau}-1}{i\tau}\Ft(u)(\tau)d\tau=\lim_{\eps \rightarrow 0}\int_{|\tau|>\eps}\frac{e^{it\tau}-1}{i\tau}\Ft(u)(\tau)d\tau
    \end{align*}
    which is true as both formulas coincide for $t=0$ and have the same derivative in $t$. \\
    \noi We can thus write for $F\in \mathcal{S}(\R)$, using the notation $\Ft{}$ for the Fourier transform on $\mathcal{S}'(\R)$ :
    \begin{align*}
        \Ft\left( IF\right)(t') &=\Bigg( \Ft \left( \chi\right)*\Ft_t \Big(t\mapsto \big(\text{p.v.}(1/i\tau), ( e^{it\tau}-1 ) \Ft(\chi F)(\tau)\big)_\tau\Big)\Bigg)(t') \ \text{as a convolution in $\mathcal{S}$ and $\mathcal{S'}$}
        \end{align*}
        By definition of the convolution, this is equal to
        \begin{align*}
        &\Bigg(t\mapsto \big(\text{p.v.}(1/i\tau), ( e^{it\tau}-1 ) \Ft(\chi F)(\tau)\big)_\tau,\Ft_t(\Ft(\chi)(t'-t))\Bigg)_t \\
        &= \Bigg(t\mapsto \big(\text{p.v.}(1/i\tau), ( e^{it\tau}-1 ) \Ft(\chi F)(\tau)\big)_\tau,\chi(t)e^{-itt'}\Bigg)_t
        \end{align*}
        Now we use dominated convergence and Fubini to swap the operators and obtain this is equal to :
        \begin{align*}
        &\Bigg(\text{p.v.}(1/i\tau),\Ft(\chi F)(\tau)\big((e^{it\tau}-1)e^{-itt'},\chi(t)\big)_t \Bigg)_\tau  \\
        &=\Bigg(\text{p.v.}(1/i\tau),\int_{\s \in \R}\Ft{\chi}(\tau - \s)\Ft{F}(\s)d\s\big(\Ft{\chi}(t'-\tau)-\Ft{\chi}(t')\big)\Bigg)_\tau  
        \end{align*}
        Finally, we use Fubini and get :
        \begin{align*}
         \Ft\left( IF\right)(t') &=\int_{\s \in \R}\Ft{F}(\s)\Bigg(\text{p.v.}(1/i\tau),\Ft{\chi}(\tau - \s)\big(\Ft{\chi}(t'-\tau)-\Ft{\chi}(t')\big) \Bigg)_\tau d\s \\
        &=: \int_{\s \in \R}\Ft(F)(\s)\big(I_1(t',\s)+I_0(t',\s)\big)d\s \, .
    \end{align*}
    \noi Using the fact that $\chi$ is a Schwartz function, we get the desired result.
    Indeed, let $\eps>0$ small enough. Then :
    \begin{align*}
    \Big|\int_{|t|>\eps}\frac{1}{i t} \Ft \chi(\tau)\Ft \chi(t-\s)dt\Big| & \le \Big|\int_{\substack{|t|>\eps \\ |t|\le \s/2}}\frac{1}{i t} \Ft \chi(\tau)\Ft \chi(t-\s)dt\Big| + \Big|\int_{\substack{|t|>\eps \\ |t|> \s/2}}\frac{1}{i t} \Ft \chi(\tau)\Ft \chi(t-\s)dt\Big| \\
    & \le C \jb{\tau}^{-A}\jb{\s}^{-1}+C\jb{\tau}^{-A}\jb{\s}^{-1} \, .
    \end{align*}
    Hence
    \begin{align*}
        I_0(\tau,\s) \les_{A} \frac{1}{\jb{\tau}^A} \frac{1}{\jb{\sigma}} \, .
    \end{align*}
    Similarly, we get the result for $I_1$ and their derivatives. This concludes the proof of Lemma~~\ref{FT}.
\end{proof}
\begin{lemma}\label{FT2}
    Let $I_1$ as defined in Lemma \ref{FT}. Define the operator $\mathcal{I}_1$ by :
\begin{equation} \label{newoperator}
    \Ft_{t}(\mathcal{I}_1F)(\tau) :=\int_\R I_1(\tau,\sigma)\Ft_{t}(F)(\s)d\s \, .
\end{equation}
then for $b>1/2$, $$\|IF\|_{h^{b}}\les\|\mathcal{I}_1F\|_{h^{b}} \, . $$
\end{lemma}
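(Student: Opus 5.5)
The idea is to write $IF=\mathcal{I}_0F+\mathcal{I}_1F$, where $\mathcal{I}_0$ is the operator defined from $I_0$ as in \eqref{newoperator}. It then suffices to show $\|\mathcal{I}_0F\|_{h^{b}}\les\|\mathcal{I}_1F\|_{h^{b}}$. Equivalently, if that is more convenient, one controls both pieces by a common quantity that is itself bounded by $\|\mathcal{I}_1F\|_{h^b}$. Going back to the derivation in Lemma \ref{FT}, the two kernels are explicit:
\[
I_1(\tau,\sigma)=\Big(\text{p.v.}(1/i\lambda),\Ft\chi(\lambda-\sigma)\Ft\chi(\tau-\lambda)\Big)_\lambda,\qquad I_0(\tau,\sigma)=-\Ft\chi(\tau)\Big(\text{p.v.}(1/i\lambda),\Ft\chi(\lambda-\sigma)\Big)_\lambda .
\]
So $I_0(\tau,\sigma)=-\Ft\chi(\tau)\,m(\sigma)$ is a rank-one kernel. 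On the physical side this says $\mathcal{I}_0F(t)=-\chi(t)\,c(F)$, where $c(F)=\int_\R m(\sigma)\Ft F(\sigma)\,d\sigma$ is the value at $t=0$ of $\int_0^t\chi F$ extended naturally. In other words, $\mathcal{I}_1F$ is the part $\chi(t)\,G(t)$, where $G$ is the antiderivative of $\chi F$ in the distributional sense, and $\mathcal{I}_0F=-\chi(t)G(0)$ corrects it so that the total vanishes at $t=0$.

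The key step is then to bound the constant $|c(F)|=|G(0)|$ by $\|\mathcal{I}_1F\|_{h^b}$. Since $\chi\equiv1$ on $[0,1]$, we have $\mathcal{I}_1F(t)=G(t)$ on $[0,1]$, hence $G(0)=(\mathcal{I}_1F)(0)$. For $b>1/2$ the Sobolev embedding $H^b_t\subset L^\infty_t$ gives $|(\mathcal{I}_1F)(0)|\les\|\mathcal{I}_1F\|_{H^b_t}$. This is applied mode by mode in $k$: the $h^b$ norm here is the space–time norm with the $\jb{k}^{s}$ weight, and the embedding is used for each fixed $k$, then squared and summed with the weights $L^{-d}\jb{k}^{2s}$. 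We then get
\[
\|\mathcal{I}_0F\|_{h^b}\le\|\chi\|_{H^b}\big(L^{-d}\textstyle\sum_k\jb{k}^{2s}|G_k(0)|^2\big)^{1/2}\les\|\mathcal{I}_1F\|_{h^b},
\]
and the triangle inequality concludes.

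The main obstacle is making rigorous the identification of $I_0$ as $-\Ft\chi(\tau)m(\sigma)$ and of $(\mathcal{I}_1F)(0)$ with $-c(F)$. This amounts to computing $\int_\R\Ft\chi(\tau)\,d\tau=\chi(0)=1$ against the principal value. A cleaner route is to avoid the kernels altogether. One uses $IF(0)=0$, so $IF=\mathcal{I}_1F-\chi\,(\mathcal{I}_1F)(0)$ because $\mathcal{I}_0F$ is a multiple of $\chi$. Evaluating at $t=0$ identifies the multiple, and the Sobolev embedding step above then finishes the argument.
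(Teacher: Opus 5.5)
Your proof is correct and is essentially the paper's argument: the paper also uses that the $I_0$-part of $IF$ is a constant multiple of $\chi$, identifies the constant through $IF(0)=0$ and $\chi(0)=1$ (giving $IF=\mathcal{I}_1F-\chi\,\mathcal{I}_1F(0)$), and concludes with the embedding $H^b\subset L^\infty$ for $b>1/2$, exactly as in your ``cleaner route''. Your explicit rank-one description of $I_0$ is a slightly more careful version of the same step. The sign you attach to $c(F)$ in your last paragraph contradicts the one you used earlier, but this is harmless since only $|c(F)|$ enters the bound.
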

\begin{proof}
 We justify this by noticing in the proof of Lemma \ref{FT} that $$IF(t)=\mathcal{I}_1 F(t)-\chi(t)\int_\R \chi F$$ so evaluating at 0 $$IF(0)=0=\mathcal{I}_1 F(0)-\int_\R \chi F $$ so $$IF(t)=\mathcal{I}_1 F(t)+\chi(t)\mathcal{I}_1F(0)$$ and, as  $b>1/2$, by Sobolev injection, $$\|IF\|_{h^{b}}\les\|\mathcal{I}_1F\|_{h^{b}} \, .$$ This concludes the proof of Lemma \ref{FT2}.
 \end{proof}

\noi We will use the operator $I$ instead of the integral operator to write the Duhamel formula, which is equivalent as $\chi$ is a cut-off function and we work for time $0\le s:=t/T_{max} \le 1$. \\

\noi We recall we wish to iterate the Duhamel formula for the  Fourier mode $a_k$ of the solution $a$. This leads us to defining the following objects.
We define recursively $$J_n:\R \times \Z_L^d \longrightarrow \C$$  by :
$$J_{0}(t,k):=\chi(t)a_{in}(k)$$
where for $k\in \Z_L^d$ $$a_{in}(k)~:=~\sqrt{n_{in}(k)}g_k(w)$$
and
$$J_n(t)~:=~\sum_{n_1+n_2=n-1}IB(J_{n_1},J_{n_2})(t) \, .$$
We set :
$$J_{\le N}:=\sum_{n\le N}J_n \, .$$ 
We have that $J_n$ corresponds to terms in the $n^{th}$-iterate of the Duhamel formula which are $(n+1)-$linear in the initial condition $a_{in}$. $J_{\le N}$ corresponds to the totality of the $N^{\text{th}}$ Duhammel iterate.
Finally if $a$ is the solution to \eqref{equationak}, we want to set formally :
$$R_{N+1}=a-J_N \, .$$
 The rest $R_{N+1}$ formally solves the equation :
\begin{equation}\label{restequation}
    R_{N+1}=J_{\sim N}+\mathcal{L}(R_{N+1})+Q(R_{N+1})
\end{equation}
where :
\begin{align*}
    J_{\sim N}&:=\sum_{\substack{n_1,n_2 \le N \\ n_1+n_2\ge N}}IB(J_{n_1},J_{n_2}) \\
    \mathcal{L}(R_{N+1}) & := 2\sum_{n_1\le N}IB(J_{n_1},R_{N+1}) \\
    Q(R_{N+1}) & := Q(R_{N+1},R_{N+1})=IB(R_{N+1},R_{N+1}) \, .
\end{align*}
\noi  We will proceed in the reverse order i.e. first prove by a fix point Theorem that there exists a solution $R_{N+1}$ to equation \eqref{restequation} and then set $a:=J_{\le N}+R_{N+1}$ which will solve equation~~\eqref{equationak} in a distributional sense (and a Duhammel sense). Indeed, we have : 
\begin{align*}
    \dt a &=\sum_{n=0}^N \dt J_n+\dt R_{N+1} \\
    &= \sum_{\substack{0\le n_1,n_2\le N \\ n_1+n_2 \le N-1}} B(J_{n_1},J_{n_2}) + \sum_{\substack{0\le n_1,n_2\le N \\ n_1+n_2 \ge N}} B(J_{n_1},J_{n_2}) +2 \sum_{n\le N}B(J_n,R_{N+1}) + B(R_{N+1},R_{N+1}) \\
    &=B(\sum_{n=0}^N  J_n+ R_{N+1},\sum_{n=0}^N  J_n+ R_{N+1}) \\
    &= B(a,a) \, .
\end{align*}

\noi We will now decompose each term $J_n$ in trees representing the history of where we iterated Duhamel in the $n$ iterations. We define :
\begin{definition}
    Let $\mathcal{T}$ be a binary tree. We will denote by $\mathcal{L}$ its set of leaves and $\mathcal{N}$ its set of branching nodes. We will also use the notation $l\in \mathcal{L}$ for a leaf, $n\in \mathcal{N}$ for a branching node and $r$ for the root of the tree. We will call scale of the tree its number of branching nodes : $s(\mathcal{T})=|\mathcal{N}|$. We have the formula $|\mathcal{L}|=s(\mathcal{T})+1$. \\
    \noi We will decorate binary trees by associating to each node a wave number $k\in \Z_L^d$ with the following constraints : if a node has wave number $k$ and two children with wave number $k_1$ and $k_2$, then one must have $k=k_1+k_2$. This implies that the decoration of a tree is entirely determined by the decoration of its leaves. These wave numbers will correspond to Fourier modes and the constraint encodes the convolution. We will say $(k_n)_{n\in \mathcal{N}}$ is admissible if it verifies the aforementioned conditions. The definition of admissibility may change depending on the context later on.
\end{definition}
    \noi In the following, we will use $\mathcal{T}$ for a binary tree, $n$ for its scale (not to confuse with $n\in \mathcal{N}$ its branching nodes), and the same notations as above for its leaves and branching nodes. \\
    \noi Now we define the objects that encode the history of iterations by induction.
    \begin{definition} \label{definitiontrees} Let $\cdot$ be the trivial tree composed of a single root. We set $$J_{\cdot}(t):=\chi(t)a_{in} \, .$$ 
    
 \noi We define for $\mathcal{T}$ a binary tree formed by  two binary trees $\mathcal T_1$ and $\mathcal{T}_2$ attached to the root :
\begin{align*}
    J_{\mathcal{T}}(t):=IB(J_{\mathcal{T}_1},J_{\mathcal{T}_2})(t) \,.
\end{align*}
If $\mathcal{T}$ is a decorated tree, we set for $(j_n)_{n\in \mathcal{L}\cup \mathcal{N}} \in \{0,1\}^{\mathcal{T}}$ : \\
$q_n:=0$ for $n\in \mathcal{L}$, $q_n~~:=~~j_{n_1}q_{n_1}+~~j_{n_2}q_{n_2}+\O_n$ otherwise if $n\in \mathcal{N}$ has $n_1\in \mathcal{T}$ decorated with $k_1$ and $n_2\in \mathcal{T}$ decorated with $k_2$ as children and where $\O_n:=-2k_{n_1}\cdot k_{n_2}=-|k_n|^2+|k_{n_2}|^2+|k_{n_1}|^2$ is the dispersion relation.
    \end{definition}
    
\begin{lemma} \label{lemmaK} Let $n \in \N$ and $\mathcal{T}$ a binary tree. \\

 \noi Then :
\begin{align*}
     J_n=\sum_{s(\mathcal{T})=n}J_{\mathcal{T}}
\end{align*}
and we have the following formula for a certain function $K_\mathcal{T}$ :
\begin{align*}
    \Ft_{t}(J_{\mathcal{T}})(\tau,k)=\left(\frac{-i\al T_{max}}{L^{d/2}}\right)^n \sum_{k_n \text{ admissible decorations}, \ n\in \mathcal{T}}K_{\mathcal{T}}(\tau,k_n:n\in \mathcal{T})\prod_{l\in \mathcal{L}}\sqrt{n_{in}(k_l)}g_{k_l}(\o)
\end{align*}
    with the following estimate for all $a\in \N$, $A>0$ :
    \begin{align} \label{estimateonK}
        \left|\partial_{\tau}^a K_\mathcal{T}(\tau,k_n \ : \ n\in\mathcal{T}) \right| \les_{a,A} \sum_{(j_n)_{  n\in\mathcal{T}}}\jb{\tau-T_{max}j_rq_r}^{-A}\prod_{n\in \mathcal{N}}\jb{T_{max}q_n}^{-1} \, .
    \end{align}
    The sum on $(j_n)$ comes from the two possible choices at each iteration $I_0$ and $I_1$ in Lemma~\ref{FT}.
\end{lemma}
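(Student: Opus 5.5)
We prove both parts by induction on the scale $n$. For the decomposition $J_n=\sum_{s(\mathcal{T})=n}J_{\mathcal{T}}$, the case $n=0$ holds because the trivial tree is the only tree of scale $0$. Assuming the identity at all scales below $n$, we use bilinearity of $IB$:
\[
J_n=\sum_{n_1+n_2=n-1}IB\Big(\sum_{s(\mathcal{T}_1)=n_1}J_{\mathcal{T}_1},\sum_{s(\mathcal{T}_2)=n_2}J_{\mathcal{T}_2}\Big)=\sum_{n_1+n_2=n-1}\ \sum_{\mathcal{T}_1,\mathcal{T}_2}J_{\mathcal{T}_1\ast\mathcal{T}_2}.
\]
Every binary tree of scale $n\ge 1$ is uniquely the join of an ordered pair $(\mathcal{T}_1,\mathcal{T}_2)$ of subtrees with $s(\mathcal{T}_1)+s(\mathcal{T}_2)=n-1$. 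Hence the right-hand side is exactly $\sum_{s(\mathcal{T})=n}J_{\mathcal{T}}$.

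For the kernel formula we again induct on the tree. For the trivial tree, $\Ft_t(J_\cdot)(\tau,k)=\widehat{\chi}(\tau)\sqrt{n_{in}(k)}g_k$, so $K_\cdot(\tau,k)=\widehat\chi(\tau)$. Since $q_r=0$ and $\mathcal{N}=\emptyset$, the bound \eqref{estimateonK} is just the Schwartz decay of $\widehat\chi$.

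Now let $\mathcal{T}=\mathcal{T}_1\ast\mathcal{T}_2$, and write $F(t,k)=B(J_{\mathcal{T}_1},J_{\mathcal{T}_2})(t,k)$. By definition of $B$, $F$ is the prefactor $-i\al T_{max}L^{-d/2}$ times the sum over $k_1+k_2=k$ of $J_{\mathcal{T}_1}(t,k_1)J_{\mathcal{T}_2}(t,k_2)e^{2\pi i tT_{max}\O_r}$, using $\O_r=-2k_1\cdot k_2$. Its time Fourier transform is therefore a convolution in the frequency variables, shifted by $T_{max}\O_r$:
\[
\Ft_t F(\sigma,k)=\frac{-i\al T_{max}}{L^{d/2}}\sum_{k_1+k_2=k}\int_{\R}\Ft_tJ_{\mathcal{T}_1}(\sigma_1,k_1)\,\Ft_tJ_{\mathcal{T}_2}(\sigma-T_{max}\O_r-\sigma_1,k_2)\,d\sigma_1 .
\]
Inserting the induction hypothesis, the prefactors combine into $(-i\al T_{max}L^{-d/2})^{n}$, the leaves of $\mathcal{T}_1$ and $\mathcal{T}_2$ together form $\mathcal{L}$, and admissible decorations of $\mathcal{T}_1$ and $\mathcal{T}_2$ glued at the root give the admissible decorations of $\mathcal{T}$. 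Applying Lemma \ref{FT} to $I$ then gives the candidate kernel
\[
K_{\mathcal{T}}(\tau,\cdot)=\sum_{j_r\in\{0,1\}}\int_{\R^2} I_{j_r}(\tau,\sigma)\,K_{\mathcal{T}_1}(\sigma_1,\cdot)\,K_{\mathcal{T}_2}(\sigma-T_{max}\O_r-\sigma_1,\cdot)\,d\sigma_1\,d\sigma .
\]

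Proving the estimate for this kernel is the main obstacle. Fix choices $(j_n)$ on the two subtrees and write $q_1=q_{r_1}$, $q_2=q_{r_2}$. By the induction hypothesis, the $K_{\mathcal{T}_1}$ factor is concentrated near $\sigma_1\approx T_{max}j_{r_1}q_1$ with weight $\jb{\sigma_1-T_{max}j_{r_1}q_1}^{-A}$. The $K_{\mathcal{T}_2}$ factor is concentrated near $\sigma-T_{max}\O_r-\sigma_1\approx T_{max}j_{r_2}q_2$. The convolution $\int\jb{x-a}^{-A}\jb{y-x-b}^{-A}dx\les\jb{y-a-b}^{-A}$ then gives decay $\jb{\sigma-T_{max}q_r}^{-A}$ in $\sigma$, where $q_r=j_{r_1}q_1+j_{r_2}q_2+\O_r$ exactly as in Definition \ref{definitiontrees}. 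The subtree factors $\prod\jb{T_{max}q_n}^{-1}$ are unaffected by this integration.

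The remaining step uses the kernel bounds of Lemma \ref{FT}, namely $\jb{\tau-j_r\sigma}^{-A}\jb{\sigma}^{-1}$, and integrates them in $\sigma$ against $\jb{\sigma-T_{max}q_r}^{-A}$. When $j_r=0$ we get $\jb{\tau}^{-A}\jb{T_{max}q_r}^{-1}$, since $\jb{\sigma}^{-1}\jb{\sigma-c}^{-A}$ integrates to $\les\jb{c}^{-1}$. When $j_r=1$ we get $\jb{\tau-T_{max}q_r}^{-A}\jb{T_{max}q_r}^{-1}$, using Peetre's inequality to bound $\jb{\tau-\sigma}^{-A}\jb{\sigma-c}^{-A}$ by $\jb{\tau-c}^{-A}$ times an integrable factor. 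In both cases the result is $\jb{\tau-T_{max}j_rq_r}^{-A}\jb{T_{max}q_r}^{-1}$, which supplies the new factor for the root node $r\in\mathcal{N}$. The care needed is to lose only finitely many powers of $A$ at each step; since the tree is finite and $A$ is arbitrary, this costs nothing.

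Derivatives $\partial_\tau^a$ fall only on $I_{j_r}$, and Lemma \ref{FT} controls these with the same bounds. Summing over $j_r$ and over the subtree choices yields the sum over $(j_n)_{n\in\mathcal{T}}$ in \eqref{estimateonK}. Finally, the factor $\chi$ in $J_\cdot$ and the cutoffs inside $I$ only contribute Schwartz factors already absorbed in these bounds.
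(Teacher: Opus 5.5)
Your proposal is correct and follows essentially the same route as the paper: induction on the tree, the recursive kernel formula obtained from Lemma~\ref{FT} together with the time-frequency convolution shifted by $T_{max}\O_r$, and weighted convolution estimates that propagate \eqref{estimateonK}. The difference is only organizational. You first convolve the two subtree bounds into $\jb{\sigma-T_{max}q_r}^{-A}$ and then integrate against the bound for $I_{j_r}$, whereas the paper runs a single case analysis in $(\tau_1,\tau_2)$. Your two-step version is, if anything, cleaner. In the $j_r=1$ case, make sure the factor left over after Peetre's inequality is $\jb{\sigma-T_{max}q_r}^{-A/2}$ rather than $\jb{\tau-\sigma}^{-A/2}$, so that integrating against $\jb{\sigma}^{-1}$ produces $\jb{T_{max}q_r}^{-1}$.
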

\begin{proof} The first formula comes from an immediate induction, the branching nodes on the tree corresponding to which term we choose to iterate Duhamel on. \\

   \noi We will prove both the second formula and bound by induction. \\
    \noi For the trivial tree, we have :
    \begin{align*}
        |J_{\cdot}(t)|=|\chi(t)a_{in}| \, .
    \end{align*}
    So $K_\cdot(\tau) =\Ft_t(\chi)(\tau)$ satisfies the equality and the bound as it is a Schwartz function. \\
     Now we suppose that the result is true for all trees of scale $\le n-1$ and we fix a tree $\mathcal{T}$ of scale $n$, formed by $\mathcal{T}_1$ and $\mathcal{T}_2$. Now we use Lemma \ref{FT} to write :
    \begin{align*}
    \Ft_{t}(J_{\mathcal{T}})(\tau,k)=&\Ft_{t}(IB(J_{\mathcal{T}_1},J_{\mathcal{T}_2}))(\tau,k)  \\
    &=\int_\R(I_0+I_1)(\tau,\sigma)\Ft_{t}(B(J_{\mathcal{T}_1},J_{\mathcal{T}_2}))(\sigma,k)d\sigma \\
     &=\left(\frac{-i \al T_{max}}{L^{d/2}}\right)\int_\R(I_0+I_1)(\tau,\sigma) \\
     & \times \sum_{\substack{k_1,k_2 \in \Z_L^d \\ k_1+k_2=k}}\int_{\R^2}\Ft_{t}(J_{\mathcal{T}_1})(\tau_1,k_1)\Ft_{t}(J_{\mathcal{T}_2})(\tau_2,k_2) \delta(\s -(\tau_1+\tau_2-2T_{max}k_1\cdot k_2))d\tau_1 d\tau_2 d\sigma \, .
\end{align*}
Using induction on the formula, this yields after identification :
\begin{align*}
    K_{\mathcal{T}}(\tau,k_n:n\in \mathcal{T}) =\int_{\R^2}(I_0+I_1)(\tau,\tau_1+\tau_2-2T_{max}k_1\cdot k_2)K_{\mathcal{T}_1}(\tau_1,k_n:n\in \mathcal{T}_1)K_{\mathcal{T}_2}(\tau_2,k_n:n\in \mathcal{T}_2)d \tau_1 d \tau_2 \, .
\end{align*}
This proves the formula for $\mathcal{T}$. \\
\noi Now let us denote by $r_1$ and $r_2$ the roots of $\mathcal{T}_1$ and $\mathcal{T}_2$ respectively. We get using the induction hypothesis on the bounds :
\begin{multline*}
    \left|\partial_{\tau}^a K_\mathcal{T}(\tau,k_n:n\in \mathcal{T}) \right| \les_{a,A} \\ \sum_{j_r\in \{0,1\}}\sum_{(j_n)_{n\in \mathcal{T}_1}}\sum_{(j_n)_{n\in \mathcal{T}_2}} \int_{\R^2}\left[\frac{1}{\jb{\tau-j_r(\tau_1+\tau_2-2T_{max}k_1\cdot k_2)}^{A_0}}\frac{1}{\jb{\tau_1+\tau_2-2T_{max}k_1\cdot k_2}}\right]\\
     \times \jb{\tau_1-T_{max}j_{r_1}q_{r_1}}^{-A_1}\jb{\tau_2-T_{max}j_{r_2}q_{r_2}}^{-A_2}\prod_{\substack{n\in \mathcal{N} \\ n \neq r}}\jb{T_{max}q_n}^{-1}d \tau_1 d\tau_2 \, .
    \end{multline*}
    We merge the three sums into one as the tree $\mathcal{T}$ is made of the two trees $\mathcal{T}_1$ and $\mathcal{T}_2$ attached to the root $r$. We also translate $\tau_1$ by $T_{max}j_{r_1}q_{r_1}$ and $\tau_2$ by $T_{max}j_{r_2}q_{r_2}$. We obtain :
    \begin{align*}
     \big|\partial_{\tau}^a K_\mathcal{T}&(\tau,k_n:n\in \mathcal{T}) \big|\les_{a,A} \sum_{(j_n)_{n\in \mathcal{T}}}\int_{\R^2}\frac{1}{\jb{\tau-j_r(\tau_1+T_{max}j_{r_1}q_{r_1}+\tau_2+T_{max}j_{r_2}q_{r_2}-2T_{max}k_1\cdot k_2)}^{A_0}}\\
     &\times \frac{1}{\jb{\tau_1+T_{max}j_{r_1}q_{r_1}+\tau_2+T_{max}j_{r_2}q_{r_2}-2T_{max}k_1\cdot k_2}} \times \jb{\tau_1}^{-A_1}\jb{\tau_2}^{-A_2}\prod_{\substack{n\in \mathcal{N} \\ n \neq r}}\jb{T_{max}q_n}^{-1}d \tau_1 d\tau_2 
\end{align*}
where $A_0$, $A_1$ and $A_2$ are functions of $A$ large enough and determined later. \\
Let us note $\al_1 :=-T_{max}j_{r_1}+T_{max}k_1\cdot k_2$, $\al_2: =-T_{max}j_{r_2}+T_{max}k_1\cdot k_2$.
Let us first consider the part of the integral where $|\al_1+\al_2| \ge 2|\tau_1+\tau_2|$. Then we can bound the integrand up to multiplicative constant by :
\begin{multline*}
    \frac{1}{\jb{\tau-j_r(T_{max}j_{r_1}q_{r_1}+T_{max}j_{r_2}q_{r_2}-2T_{max}k_1\cdot k_2)}^{A_0}}
      \frac{1}{\jb{T_{max}j_{r_1}q_{r_1}+T_{max}j_{r_2}q_{r_2}-2T_{max}k_1\cdot k_2}} \\
      \times \jb{\tau_1}^{-A_1}\jb{\tau_2}^{-A_2}\prod_{\substack{n\in \mathcal{N} \\ n \neq r}}\jb{T_{max}q_n}^{-1} d\tau_1 d\tau_2 \, .\end{multline*}
Hence, the result by integrating in $\tau_1$ and $\tau_2$. \\
\noi Now consider the case where  $|\al_1+\al_2| \le 2|\tau_1+\tau_2|$.\\
\noi If $j_r=0$, the result is obvious by bounding the integrand by :
\begin{align*}
     \frac{1}{\jb{\tau}^{A_0}}
      \frac{1}{\jb{0}} \jb{\tau_1}^{-A_1}\jb{\tau_2}^{-A_2}\prod_{\substack{n\in \mathcal{N} \\ n \neq r}}\jb{T_{max}q_n}^{-1}d \tau_1 d\tau_2 
\end{align*}
using that $\jb{x+y} \les \jb{x}\jb{y}$ and integrating in $\tau_1$ and $\tau_2$.  \\
\noi If $j_r=1$, we distinguish two sub-cases. If $|\tau+\al_1+\al_2| \ge 2|\tau_1+\tau_2|$, we bound the integrand up to a multiplicative constant by
\begin{multline*}
    \frac{1}{\jb{\tau-j_r(T_{max}j_{r_1}q_{r_1}+T_{max}j_{r_2}q_{r_2}-2T_{max}k_1\cdot k_2)}^{A_0}}
      \frac{1}{\jb{0}} 
      \times \jb{\tau_1}^{-A_1}\jb{\tau_2}^{-A_2}\prod_{\substack{n\in \mathcal{N} \\ n \neq r}}\jb{T_{max}q_n}^{-1}d \tau_1 d\tau_2 
\end{multline*}
and we integrate in $\tau_1$ and $\tau_2$ using that $\jb{x+y}\les \jb{x}\jb{y}$.\\
If $|\tau+\al_1+\al_2| \le 2|\tau_1+\tau_2|$, we bound the integrand by
\begin{equation*}
    \frac{1}{\jb{0}^{A_0}}
      \frac{1}{\jb{0}} \jb{\tau_1}^{-A_1}\jb{\tau_2}^{-A_2}\prod_{\substack{n\in \mathcal{N} \\ n \neq r}}\jb{T_{max}q_n}^{-1}d \tau_1 d\tau_2 
\end{equation*}
and we get the result by integrating in $\tau_1$, $\tau_2$ and using that $\jb{x+y}\les \jb{x} \jb{y}$.
We get our result choosing $A_0$, $A_1$ and $A_2$ large enough.
\end{proof}
\noi We now define :
\begin{equation}\label{rhodefinition}
    \rho := L^{-\delta} \, .
\end{equation}
\noi Observe that :
    \begin{equation*}
        \rho = \begin{cases}
            \al T_{max}^{1} \ \ \ \text{in the case of Theorem \ref{theorem1}}\\
            \al T_{max}^{1/2(1+1/d)} \ \ \ \text{in the case of Theorem \ref{theorem2}}
        \end{cases}
    \end{equation*}
    and 
    \begin{align*}
        \al T_{max}^{1/2} \le \rho  \, .
    \end{align*} 
\noi Our goal will be to bound the series of the $J_n$ by a geometric series $\rho^n$ for some norm. \\

\subsection{Dependence of parameters and negligible sets}
\noi  We set our parameters for the rest of the paper\label{parameter}.
We fix $\delta >0$, $\gamma>\delta$ (for Theorem~\ref{theorem1} only). The parameter $b>1/2$ will be chosen close enough to $1/2$. We fix $s>d/2$. The parameter $N$ will be chosen large enough depending on $\delta$. The parameter $\theta $ will be chosen small enough depending on $N$, $s$, $b-1/2$, $\delta$ and $\gamma$. The constant $C$ will vary from line to line but will be independent from previously introduced parameters. \\

\noi We will call a property \textbf{L-certain} if it holds on an event of probability $\ge 1-Ke^{-cL^{\theta}}$ where $c,K$ depend on $\theta$ and $\gamma$. We will call an event \textbf{L-negligible} if it occurs with probability $\le Ke^{-cL^{\theta}}$ where $c,K$ depend on $\theta$ and $\gamma$. \\

\subsection{Statement of the two main bounds}
We recall we defined trees and decorated trees in Definition \ref{definitiontrees}. \\

\noi We now present the main propositions that we want to show in the settings of Theorems \ref{theorem1}, \ref{theorem2} and \ref{wavetheorem} :
\begin{proposition}\label{bound1} For any tree $\mathcal{T}$ of scale $0\le s(\mathcal{T})=n\le 2N$, we have L-certainly for $L$ large enough :
    \begin{align*}
        \sup_{k}\jb{k}^{2s}\left|\left|(J_\mathcal{T})_k\right|\right|_{h^b}  \le L^{\theta+C(b-1/2)}\rho^{n} \, .
    \end{align*}
\end{proposition}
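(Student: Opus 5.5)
Using Lemma \ref{lemmaK}, I would write $(J_\mathcal{T})_k$ as a multilinear Gaussian expression with kernel $K_\mathcal{T}$ and prefactor $(\al T_{max}L^{-d/2})^n$. The proof then has two parts: a hypercontractivity step for fixed $(\tau,k)$, and a deterministic counting estimate in which only trivial lattice-point counts are used (this is the setting of Theorem \ref{theorem1}; in the setting of Theorem \ref{theorem2} the same scheme is run with a divisor bound).

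\textbf{Step 1: reduction to a second moment.} Fix $k$ and $\tau$. The quantity $\Ft_t(J_\mathcal{T})(\tau,k)$ is a polynomial of degree $n+1$ in the Gaussians $(g_{k_l})$. Gaussian hypercontractivity gives a large-deviation estimate with the $L^{\theta}$ loss, L-certainly, from a second-moment bound. To make the bound uniform:
\begin{itemize}
\item the decay of $n_{in}$ restricts us to $|k|\le L^{\theta}$, so $\langle k\rangle^{2s}$ costs only $L^{C\theta}$;
\item the $\tau$-dependence is handled by discretizing $\tau$ on a polynomial grid, using the derivative bounds \eqref{estimateonK} and the rapid decay in $\tau$;
\item the tails in $k$ and $\tau$ are controlled crudely.
\end{itemize}
A union bound over the polynomially many $(k,\tau)$ grid points and the finitely many trees with $n\le 2N$ keeps the exceptional set L-negligible. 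The $h^b$ norm then costs $L^{C(b-1/2)}$ beyond an $L^2_\tau$ bound, because of the $\langle \tau\rangle^{b}$ weight and the localization $\langle\tau - T_{max}j_rq_r\rangle^{-A}$ with $|q_r|\lesssim L^{C\theta}$.

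\textbf{Step 2: Wick expansion of the second moment.} I would compute $\E|\Ft_t(J_\mathcal{T})(\tau,k)|^2$ by Wick's theorem. Since the nonlinearity is $u^2$, there are no conjugates inside $J_\mathcal{T}$, so there is no self-pairing. Every pairing matches each leaf of $\mathcal{T}$ with a leaf of the conjugate copy, giving a bijection between the two sets of leaves, with equal leaf decorations. Degenerate coincidences such as $\E|g|^4$ only change constants. The second moment is therefore bounded by
\[(\al T_{max})^{2n}L^{-dn}\sum_{\sigma}\sum_{(k_l)}\prod_l n_{in}(k_l)\,|K_\mathcal{T}(\tau,\cdot)|\,|K_\mathcal{T}(\tau,\sigma\cdot)|.\]
By Cauchy--Schwarz this is at most the diagonal term with $\sigma=\mathrm{id}$, times a constant $C^n$. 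I then need
\[\sum_{(k_l):\,\sum k_l = k}\prod_l n_{in}(k_l)\prod_{m\in\mathcal{N}}\langle T_{max}q_m\rangle^{-2}\lesssim L^{C\theta}\big(L^{d}T_{max}^{-1}\big)^{n},\]
which yields $(\al T_{max})^{2n}T_{max}^{-n}=(\al T_{max}^{1/2})^{2n}\le\rho^{2n}$ in the Theorem \ref{theorem1} regime. In fact the trivial count gives an even better bound there.

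\textbf{Step 3: the counting (main obstacle).} I would bound the lattice sum by induction on the tree, going from the root downward. The $q_m$ are affine in the $\Omega_m$, so after dyadically localizing each $\langle T_{max}q_m\rangle\sim 2^{j_m}$ one can change variables to the $\Omega_m$. At each branching node, given the parent's $k$, one counts the $k_1\in\Z_L^d$ with $|k_1|\le L^\theta$ satisfying $2k_1\cdot(k-k_1)\in$ an interval of length $T_{max}^{-1}2^{j}$.

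\emph{Trivial count.} There are $L^{d+C\theta}$ choices of $k_1$. Gaining $T_{max}^{-1}$ requires the level-set bound $\#\{k_1: \Omega\in I\}\lesssim L^{d}\,(|I|+L^{-2})L^{C\theta}$. This follows by fixing $d-1$ coordinates, after which $\Omega$ is quadratic in the last one. In the Theorem \ref{theorem1} regime it suffices to use $\sum_j 2^{-2j}\cdot$count, and the $\rho^{2n}$ comes from $\al T_{max}=\rho$, even without the $T_{max}^{-1}$ gain.

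\emph{Divisor-bound count.} For Theorem \ref{theorem2}, the lattice-point count on a thin shell uses the divisor bound, giving $L^{d}T_{max}^{-1}+L^{d-1}$-type counts. This produces the exponent $\al T_{max}^{(1+1/d)/2}$. The difficulty is to order the nodes so that each $\Omega_m$ is a genuine quadratic function of a free variable given the earlier choices, and to handle degenerate nodes where $k_{m_1}\cdot k_{m_2}=0$ exactly, which must be absorbed via the $L^{-2}$ floor.
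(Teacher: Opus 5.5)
Your overall scheme is the paper's (Subsection \ref{subsection0}): hypercontractivity at fixed $\tau$, the cutoff $|k_l|\le L^\theta$, a grid in $\tau$, Isserlis pairings without self-coupling, localisation of $T_{max}q_n$, and node-by-node counting. Your Cauchy--Schwarz reduction to $\sigma=\mathrm{id}$ is cleaner than the paper's step of discarding the admissibility constraints of $\mathcal{T}'$ and then using Lemma \ref{technical}. It is legitimate because permuting the leaves preserves $\sum_l k_l=k$, the cutoff, and $\prod_l n_{in}(k_l)$. In the regime of Theorem \ref{theorem1}, where $\rho=\alpha T_{max}$, the trivial count $m\le L^{d+\theta}$ closes, so that case is fine. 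The gap is in the counting for Theorem \ref{theorem2}, which the statement also covers with $\rho=\alpha T_{max}^{(1+1/d)/2}$. There one needs $m\les L^{d+C\theta}T_{max}^{-1+1/d}$ per node, and the paper proves $m\les L^{d+\theta}T_{max}^{-1}$.

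Your justification of $\#\{k_1:\Omega\in I\}\les L^{d+C\theta}(|I|+L^{-2})$ by fixing $d-1$ coordinates does not work. Since $\Omega=-2|k_1-k/2|^2+|k|^2/2$, on each one-dimensional fibre the condition reads $(y-c)^2\in J$ with $y\in\Z$ and $|J|\sim L^2|I|$. This has up to $\sim L|I|^{1/2}+1$ solutions near the vertex, and in any case contributes $+1$ per fibre. Summing over the $\sim L^{(d-1)(1+\theta)}$ fibres therefore yields only about $L^{d}|I|^{1/2}+L^{d-1}$; even away from the vertex you keep a floor $L^{d-1}$, never $L^{d-2}$.

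The bound you want is true, but it is number-theoretic. The paper fixes $d-2$ coordinates and completes the square in the last two, getting $(2N_1-k_1)^2+(2N_2-k_2)^2=r$ with $r$ ranging over $\les 1+L^2T_{max}^{-1}$ integers. It then uses the divisor bound in $\Z[i]$, which gives $O(L^{\epsilon})$ representations per $r$.

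Your ``divisor-bound'' paragraph has this inverted: $L^dT_{max}^{-1}+L^{d-1}$ is what the one-variable fibering gives, not the divisor bound. A floor $L^{d-1}$ is not enough, since it contributes $(\alpha T_{max}L^{-d/2})^2L^{d-1}=\alpha^2T_{max}^2L^{-1}$ per node. That is $\le\rho^2$ only if $T_{max}^{1-1/d}\le L$, which fails for instance for $T_{max}$ near $L^2$ in Theorem \ref{wavetheorem}.

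Finally, the exponent $(1+1/d)/2$ is not produced by this count at all. Proposition \ref{bound1} only yields $\alpha T_{max}^{1/2}\le\rho$. The extra $T_{max}^{1/(2d)}$ is the loss from the affine $\chi^*$ counting (Lemma \ref{countinglemmasimple}) in Proposition \ref{bound2}.
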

\begin{proof}
    See Section \ref{proofbound1}.
\end{proof}

\begin{proposition} \label{bound2}
    For $L$ large enough, we have L-certainly, for all tree $\mathcal{T}$ such that $|\mathcal{T}|~~=~~2n+1$ with $0\le n\le N$ that the linear operator
    \begin{align*}
        \mathcal{P} : v \in \Tilde{h}^{s,b} \mapsto IB(J_\mathcal{T},v) \in \Tilde{h}^{s,b} 
    \end{align*}
    satisfies :
    \begin{align*}
        \|\mathcal{P}\|_{\Tilde{h}^{s,b}\xrightarrow[]{}\Tilde{h}^{s,b}} \le L^{\theta} \rho^{n+1/2} \, .
    \end{align*}
\end{proposition}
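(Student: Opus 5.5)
We follow the $TT^*$ strategy of Deng--Hani \cite{Deng_2021}, adapted to the quadratic nonlinearity. Fix a tree $\mathcal{T}$ with $|\mathcal{T}|=2n+1$, i.e. of scale $n$. By Lemma \ref{lemmaK} and Lemma \ref{FT}, the space--time Fourier transform of $\mathcal{P}v$ reads
\begin{align*}
\Ft_t(\mathcal{P}v)(\tau,k)=\Big(\frac{-i\al T_{max}}{L^{d/2}}\Big)^{n+1}\sum_{k_1+k'=k}\int H^{\mathcal{T}}_{k,k'}(\tau,\tau')\,\Ft_t(v)(\tau',k')\,d\tau',
\end{align*}
where
\begin{align*}
H^{\mathcal{T}}_{k,k'}(\tau,\tau')=\sum_{\text{decorations},\,k_r=k_1=k-k'}\int (I_0+I_1)(\tau,\tau_1+\tau'-2T_{max}k_1\cdot k')\,K_{\mathcal{T}}(\tau_1,\dots)\,d\tau_1\prod_{l\in\mathcal{L}}\sqrt{n_{in}(k_l)}g_{k_l}.
\end{align*}
The weights $\jb{\tau}^{b}$, $\jb{\tau'}^{-b}$ and $\jb{k}^s\jb{k'}^{-s}$ are handled as follows. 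Since $n_{in}$ is Schwartz, every leaf $k_l$, and hence $k_1$, may be restricted up to an L-negligible error to $|k_l|\le L^{\theta}$. Then $\jb{k}^s\les L^{C\theta}\jb{k'}^s$. The time weights cost at most $L^{C(b-1/2)}$, using the decay \eqref{estimateonK} together with a localisation of $\tau,\tau'$ to $|\tau|,|\tau'|\le L^{C}$; the complementary region is harmless because of the $\jb{\tau-\cdot}^{-A}$ decay. After a meshing argument in $(\tau,\tau')$, which again costs $L^{\theta}$, it therefore suffices to bound, for fixed time variables, the operator norm on $l^2_{k'}$ of the random matrix $\mathcal{M}_{kk'}=(\al T_{max}L^{-d/2})^{n+1}H_{k,k'}$.

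To bound $\|\mathcal{M}\|$ we use $\|\mathcal{M}\|^{2D}=\|(\mathcal{M}\mathcal{M}^*)^{D}\|\le \operatorname{tr}\big((\mathcal{M}\mathcal{M}^*)^D\big)$ with $D$ a large fixed integer, and estimate $\E\operatorname{tr}$. Gaussian hypercontractivity (the entries are polynomials of degree $\le 2D(n+1)$ in the $g$'s) converts this moment bound into an L-certain bound at the cost of $L^\theta$; a union bound over the finitely many trees of scale $\le N$ handles the uniformity in $\mathcal{T}$. By Wick's theorem, $\E\operatorname{tr}$ is a sum over pairings of the $2D(n+1)$ leaves of the $2D$ copies of $\mathcal{T}$ (alternately conjugated). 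This produces a large "molecule" of $2D$ copies of $\mathcal{T}$ glued along their leaves and chained via the $k,k'$ indices. Since $u^2$ has no conjugates, leaves can only be paired between a copy of $\mathcal{T}$ and a copy of $\cj{\mathcal{T}}$, and there is no self-pairing.

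The bound on each pairing then reduces to a counting problem. Using $\prod_{n}\jb{T_{max}q_n}^{-1}$ from \eqref{estimateonK}, we dyadically localise each $q_n$. Each branching node then imposes either a constraint $|\O_n-\sigma_n|\le T_{max}^{-1}$ (when $\jb{T_{max}q_n}\sim1$) or yields a gain. The number of free decorations is controlled by choosing an order in which to fix the wave numbers. For Theorem \ref{theorem1}, the trivial count gives $L^{d}$ per free vector; each node then contributes $(\al T_{max}L^{-d/2})^2L^{d}\cdot T_{max}^{-1}\cdots$, which yields $\rho^{2}$ per node pair. For Theorem \ref{theorem2}, we use the divisor/lattice-point bound for $\{x\in\Z^d_L:|x|\le L^\theta,\ |x\cdot(k-x)-\sigma|\le T_{max}^{-1}\}$, namely $\les L^{\theta}(L^{d}T_{max}^{-1}+L^{d-1})$ up to $L^{\eps}$. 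This is where $\rho=\al T_{max}^{\frac12(1+1/d)}$ comes from. Summing over the free $k'$ chain index gives an extra factor $L^{d}$ in total (not per copy), which disappears after taking the $2D$-th root. The outcome is $\|\mathcal{M}\|\le L^{\theta}\rho^{n+1/2}$: the half power reflects that the $(n+1)$ factors of $\al T_{max}$ are balanced against only $n$ leaf-pairings per copy, plus the one open input $v$.

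The main obstacle is this counting step, specifically the degenerate configurations created by the $TT^*$ structure. In $\mathcal{M}\mathcal{M}^*$ the conjugation produces vertices whose dispersion relation is that of $i\dt u-\Dl u=|u|^2$, namely $|k|^2-|k_1|^2+|k_2|^2$ type. In addition, pairings across copies can force $k_1\cdot k'=0$ or collapse several vectors onto one another, and in those cases the divisor bound gives no gain. We would handle them with a case analysis on the molecule, following \cite{Deng_2021}: find a spanning order in which each new vector is determined by one affine constraint, use the trivial count $L^{d}$ only at degenerate steps, and show that the number of such steps is at most what is allowed by the range $\al\ge L^{-\frac{d(d+1)}{4d-2}-\delta}$. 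The proof would close by choosing $D$ large, so that the $L^{d/2D}$ loss is absorbed into $L^\theta$.
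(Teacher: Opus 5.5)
Your reduction in the time variables is a workable alternative to the paper's route. The paper proves a $\Tilde{h}^{0,b}\to\Tilde{h}^{0,1-b}$ bound and interpolates with a crude $\Tilde{h}^{0,b}\to\Tilde{h}^{0,1}$ bound. That interpolation, not a leaf-pairing count, is where the half power is lost: the fixed-time operator is actually $\les\rho^{n+1}$. In your route you need the $\jb{\sigma}^{-1}$ gain of $I_1$ to put the output in $L^\infty_\tau$, and you must treat $I_0$ as in Lemma~\ref{FT2}.

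The genuine gap is the random-matrix step. The bound $\|\mathcal{M}\|^{2D}\le\mathrm{tr}\big((\mathcal{M}\mathcal{M}^*)^D\big)$ is useless here. Only $k-k'$ is localized ($|k-k'|\les L^{\theta}$), while $k,k'$ range over all of $\Z_L^d$. The entry $\mathcal{M}_{k'+k_1,k'}$ depends on $k'$ only through $T_{max}k_1\cdot k'$, so it is constant along infinitely many $k'$; for $k_1=0$ it is a random scalar times the identity. Hence $\mathcal{M}$ is not compact and the trace is $+\infty$, so your claim that the free chain index costs only $L^d$ is false. The paper instead uses Schur's test, $\|\mathcal{O}\|_{l^2\to l^2}\les L^{d+\theta}\sup_{k,k'}|\mathcal{O}_{kk'}|$, where the factor counts the possible values of $k-k'$. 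It then bounds each entry of $(\chi\chi^*)^D$ L-certainly by hypercontractivity and a pairing count. These entry bounds must hold simultaneously for infinitely many $(k,k')$, which no union bound can give. That is exactly the role of Lemma~\ref{test} (Claim 3.7 of \cite{Deng_2021}): the entries depend on $(k,k')$ only through $(k-k_{r_1})\cdot k_{r_1}$ and $k_{r_{2D-1}}\cdot(k'-k_{r_{2D-1}})$. These are approximated by $L^{C\theta^{-1}}$ representatives, so only finitely many L-negligible sets are removed. Localizing your trace to boxes in $k$ runs into the same uniformity problem, and your only union bound (over trees) does not address it.

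Second, your counting does not produce the stated $\rho$. The divisor bound at the nodes of the copies of $\mathcal{T}$ and at the roots of $\chi$ gives $L^{d+\theta}T_{max}^{-1}$ per node, i.e. $\al T_{max}^{1/2}$ per node. It is not the source of $\rho=\al T_{max}^{\frac12(1+\frac1d)}$. (Your $L^{d-1}$ term would also dominate once $T_{max}>L$, which happens in the range of Theorem~\ref{theorem2}; the divisor argument gives $L^{d-2}$ up to $L^{C\theta}$ there.) The loss $T_{max}^{1/(2d)}$ comes from the roots of the $\chi^*$ factors, whose resonance $k_{r_{2j-1}}\cdot(k_{r_{2j}}-k_{r_{2j-1}})$ is affine in the counted variable. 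Lemma~\ref{countinglemmasimple} gives a count that degrades as the label $k_{r_{2j-1}}$ approaches $0$. One must sum it over the at most $L^{\theta}L^dT_{max}^{-1}$ values of that label allowed by the parent $\chi$ node, which yields an effective $L^dT_{max}^{-1+1/d}$ per such vertex. The case $k_{r_{2j-1}}=0$ is absorbed using $T_{max}\le L^{d/(2-1/d)}$, and this is where the lower bound on $\al$ in Theorem~\ref{theorem2} comes from. Your plan does not say how these vertices are counted. The trivial count $L^d$ there only gives $\|\mathcal{M}\|\les L^{\theta}(\al T_{max}^{1/2})^{n+1}T_{max}^{1/2}$, i.e. $\al T_{max}$ for $n=0$. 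That is the Theorem~\ref{theorem1} rate, which exceeds $\rho=L^{-\delta}$ at the $T_{max}$ of Theorem~\ref{theorem2} whenever $d\ge2$ and $\al<L^{-\delta}$.
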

\begin{proof}
    See Section \ref{proofbound2}.
\end{proof}
\begin{remark}
    The dependence on $T_{max}$ in Propositions \ref{bound1} and \ref{bound2} is hidden within the operator $B$ used to define the stochastic objects $J_\mathcal{T}$.
\end{remark}
\begin{remark}
    Proposition \ref{bound2} is useful to show the contraction in Section \ref{proofofLWP} below thanks to the fact that $\rho$ is elevated to a power strictly larger than $n$.
\end{remark}
\section{Proof of local well-posedness} \label{proofofLWP}

We assume Propositions \ref{bound1} and \ref{bound2} are true. 
We present the proof of the local well-posedness i.e. Theorems \ref{theorem1} and \ref{theorem2} at the same time as the proof is the same within both contexts. \\

\noi We recall we have written our solution $a:=\Ft_x(e^{it\Dl}u)$ where $u$ solution to \eqref{weaknls} and set formally $$a(t,k)~=~J_0(t,k)~+~J_1(t,k)+...+J_N(t,k)+R_{N+1}(t,k)$$ where $R_{N+1}$ solves equation \eqref{restequation}. With this transformation, solving equation \eqref{weaknls} is equivalent to solving the rest equation \eqref{restequation} and having some control on the iterates $(J_n)_{0\le n \le N}$.
We see equation \eqref{restequation} as a fixed point problem. We set :
\begin{align*}
\Phi:v \in \Tilde{h}_{T_{max}}^{s,b} \mapsto J_{\sim N}+\mathcal{L}(v)+\mathcal{Q}(v) \in \Tilde{h}_{T_{max}}^{s,b} \, .
\end{align*}

\noi We now present the proof that $\Phi$ is a contraction L-certainly. From now on, we will work on a L-certain set $E_L$ on which the Propositions \ref{bound1} and \ref{bound2} hold. \\

\noi Using Proposition \ref{bound1}, we prove that $L-certainly$, $\forall \, 1 \le n \le 3N$ :
\begin{align*}
    \sup_{k}\jb{k}^{2s}\left|\left|(J_n)_k\right|\right|_{h^b} & \le (n+1)!L^{\theta+C(b-1/2)}\rho^{n} \\
    &\le L^{\theta+C(b-1/2)}\rho^{n}
\end{align*}
 as there are at most $C_{2n+1}$ trees of scale $n$ where $C_n$ is the n-th Catalan number for $n\in \N$. We have $C_n=\frac{1}{2n+1}\binom{2n+1}{n}\sim\frac{4^n}{n^{3/2 }\pi}$. In our setting, these asymptotics do not matter as $C_n$ is a constant which only depends on $N$ which will be fixed, hence we can ignore it and bound it by $L^{\theta}$ for $L$ large enough. \\
 
\noi Now we want to show that 
    \begin{align*}
        \Phi :v\in Z \mapsto J_{\sim N} + \mathcal{L}(v)+\mathcal{Q}(v) \in Z
    \end{align*}
    is a contraction on $Z:=\left\{v, \|v\|_{\Tilde{h}_{T_{max}}^{s,b}} \le \rho^N\right\}$ for $N$ large enough. \\
    To show this rigorously, we should work with a minimizing sequence $v_n$ realizing the infimum of the Bourgain restriction norm, apply the reasoning below, and go to the limit. For simplicity of notation, we work directly with a function $v$ such that $\|v\|_{\Tilde{h}_{}^{s,b}} \le \rho^N$ and do computations in $\Tilde{h}^{s,b}$.
    \\  We first show boundedness. We use Proposition \ref{bound1} to get bounds on the trees which are of scale~$\ge n+1$ :
    \begin{align*}
        \|J_{\sim N}\|_{\Tilde{h}^{s,b}}^2 &\le L^{-d}\sum_{k\in\Z_L^d}\jb{k}^{2s}\|J_{\sim N}\|_{h^{b}}^2 \les \left(L^{\theta+C(b-1/2)}\rho^{N+1}\right)^2L^{-d}\sum_{k\in\Z_L^d}\jb{k}^{-2s} \\
        & \le \rho^{2N}\rho^2\left(L^{\theta+C(b-1/2)}\right)^2L^{-d}(L^d+L^{2s}\sum_{\substack{k\in\Z^d-\{0\}\\ |k|\ge L}}\jb{k}^{-2s} ) \\
        & \le \rho^{2N}L^{-2\delta}\left(L^{\theta+C(b-1/2)}\right)^2L^{-d}(L^d+L^{2s}L^{-2s+d} ) \\
        & \le \rho^{2N}L^{-2\delta}\left(L^{\theta+C(b-1/2)}\right)^2 \le  \rho^{2N}/9 \, .
    \end{align*}
    for $\theta$ and $b-1/2$ small enough, $L$ large enough. \\
    Now using the operator bound in Proposition~\ref{bound2} we get by summing on all trees of scale $\le N$ :
    \begin{align*}
        \|\mathcal{L}(v)\|_{\tilde{h}^{s,b}} \le \sum_{n=0}^{N} \frac{n!}{2^n} L^\theta \rho^{n+1/2}\|v\|_{\Tilde{h}^{s,b}} \le L^\theta \rho^{1/2}\|v\|_{\tilde{h}^{s,b}} \le \rho^N/3
    \end{align*}
    for $L$ and $N$ large enough. \\
    
    \noi Finally, using $b < 1$ and Proposition \ref{duhamel} below (see \cite{Ginibre} for reference) with $T=1$, we get for some $0<b'<1/2$ such that $b+b'\le 1$ :
    \begin{align*}
        \|Q(v)\|_{\Tilde{h}^{s,b}}&= \|IB(v,v)\|_{\Tilde{h}^{s,b}} \\
        &\les \|\chi B(v,v)\|_{\Tilde{h}^{s,-b'}} \\
        & \le \|\chi B(v,v)\|_{\Tilde{h}^{s,0}}
    \end{align*}
 %
 We now denote :
 $$\|u\|_{l_k^{2,s}}:=\|\jb{k}^su\|_{l_k^2} \, .$$
    We have for $u,v \in \Tilde{h}^{s,b}$ :
    \begin{align*}
        \|\Ft_{t}(\chi(t) u*_k v)\|_{L_\tau^2 l_k^{2,s}} &= \|\chi(t)( u*_k v)\|_{L_t^2 l_k^{2,s}} \, .
        \end{align*}
        We use the fact that $\|\chi\|_{L^\infty} \le 1$ :
        \begin{align*}
        \|\Ft_{t}(\chi(t) u*_k v)\|_{L_\tau^2 l_k^{2,s}}&\le \| u*_k v\|_{L_t^2 l_k^{2,s}} \\
        &=\|\jb{k}^su*_k v\|_{L_t^2 l_k^{2}} \, .
        \end{align*}
        \noi We use Young's inequality and get :
        \begin{align*}
        \|\Ft_{t}(\chi(t)u*_k v)\|_{L_\tau^2 l_k^{2,s}} & \le C \|u \|_{L_t^2 l_k^{2,s}}\|v\|_{L_t^\infty l_k^{1}} +\|v \|_{L_t^2 l_k^{2,s}}\|u\|_{L_t^\infty l_k^{1}} \\
        & = C\|\Ft_{t}(u) \|_{L_\tau^2 l_k^{2,s}}\|v\|_{L_t^\infty l_k^{1}}+C\|\Ft_{t}(v) \|_{L_\tau^2 l_k^{2,s}}\|u\|_{L_t^\infty l_k^{1}} \, .
        \end{align*}
        Finally, using $b> 1/2$, $s>d/2$ and Cauchy-Schwarz inequality :
        \begin{align*}
        \|\Ft_{t}(u*_k v)\|_{L_\tau^2 l_k^{2,s}}&\le C\|u\|_{\Tilde{h}^{s,b}}\|v\|_{\Tilde{h}^{s,b}}L^{3d/2} \, .
    \end{align*}
    Applying this to our computation and using that $T_{max}\le L^{C(d,\gamma)}$, we get :
\begin{align*}
        \|\chi B(v,v)\|_{\tilde{h}^{s,0}} \le L^{d} \left(\frac{\al T_{max}}{L^{d/2}} \right)\rho^{2N} \le \rho L^{d/2+C(d,\gamma)} \rho^{2N} \le L^{-N\delta+C(d,\gamma)}\rho^N \le \rho^N/3
    \end{align*}
    for $N$ large enough. \\
    Hence the boundedness. \\

    \noi For the contraction, let $v,w \in Z$  :
    \begin{align*}
        \|\Phi(v-w)\|_{\Tilde{h}^{s,b}}&\le\|\mathcal{L}(v-w)\|_{\Tilde{h}^{s,b}}+\|\mathcal{Q}(v,v-w)\|_{\Tilde{h}^{s,b}}+\|\mathcal{Q}(w,v-w)\|_{\Tilde{h}^{s,b}} \\
        & \le L^\theta \rho^{1/2}\|v-w\|_{\Tilde{h}^{s,b}}+L^{C(d)}\al T_{max}\rho^N(\|v-w\|_{\Tilde{h}^{s,b}}) \\
        & \le \frac{1}{2}\|v-w\|_{\Tilde{h}^{s,b}}
    \end{align*}
    for $N$ and $L$ large enough. \\
    Hence the contraction and thus we can apply the fixed point theorem on $Z$ which is a complete space and get the existence of $v\in Z$ solving \eqref{restequation}.\\
    
\noi Now, one can use this result to finish the proof of local well-posedness in Theorems \ref{theorem1} and \ref{theorem2}. First we apply the contraction result and find a unique fix point we will call $R_{N+1}$. Then we set :
\begin{equation*}
    a:=\sum_{n=0}^N J_n +R_{N+1}
\end{equation*}
The sequence $a_k(t)$ belongs to the $\Tilde{h}^{s,b}$ space as showed by the computation :
\begin{align*}
    \|a\|_{\Tilde{h}^{s,b}}\le \sum_{n=0}^N \|J_n\|_{\tilde{h}^{s,b}}+\|R_{N+1}\|_{\Tilde{h}^{s,b}} \le L^{\theta+C(b-1/2)}\sum_{n=0}^N\rho^n+\rho^N =L^{\theta+C(b-1/2)}\frac{1-\rho^{N+1}}{1-\rho}+\rho^N < + \infty \, .
\end{align*}
Function $a$ is thus a mild solution to equation \eqref{equationak}. Indeed :
\begin{align*}
    a_k(t)&=J_0 + \sum_{n=1}^N J_n +R_{N+1} \\
    &=a_k(0)+\sum_{n=1}^N \sum_{\substack{0 \le n_1,n_2\le N-1\\n_1+n_2=n-1}}IB(J_{n_1},J_{n_2})+J_{\sim N} +\mathcal{L}(R_{N+1})+Q(R_{N+1},R_{N+1}) \, .
\end{align*}
Then, identifying the terms in equation \eqref{restequation}, we obtain for $0 \le t \le T_{max}$ :
\begin{align*}
     a_k(t)&=a_k(0) + \sum_{n=1}^N \sum_{\substack{0 \le n_1,n_2\le N-1\\n_1+n_2=n-1}}IB(J_{n_1},J_{n_2}) + \sum_{\substack{n_1,n_2 \le N \\ n_1+n_2\ge N}}IB(J_{n_1},J_{n_2}) \\
     &+ 2\sum_{n_1\le N}IB(J_{n_1},R_{N+1}) + IB(R_{N+1},R_{N+1}) \\
     &=a_k(0)+IB(a,a) \, .
\end{align*}
We also have in a distributional sense in $(0,T_{max})$ :
\begin{align*}
    \dt a &=\sum_{n=0}^N \dt J_n+\dt R_{N+1} \\
    &= \sum_{\substack{0\le n_1,n_2\le N \\ n_1+n_2 \le N-1}} B(J_{n_1},J_{n_2}) + \sum_{\substack{0\le n_1,n_2\le N \\ n_1+n_2 \ge N}} B(J_{n_1},J_{n_2}) +2 \sum_{n\le N}B(J_n,R_{N+1}) + B(R_{N+1},R_{N+1}) \\
    &=B(\sum_{n=0}^N  J_n+ R_{N+1},\sum_{n=0}^N  J_n+ R_{N+1}) \\
    &= B(a,a) \, ,
\end{align*}
i.e. $a$ is a solution to equation \eqref{equationak}. \\
Then rescaling in time and defining $u:=\Ft_x^{-1}(e^{it|k|^2}a)$, we have that u is a mild solution of equation \eqref{nls}.  \\

\noi Furthermore,
\begin{align*}
    \|u\|_{L_t^\infty L_x^\infty} & \le \|u\|_{L_t^\infty h^s}\le \|u\|_{h^{s,b}}\le \|a\|_{\Tilde{h}^{s,b}} \le L^{\theta+C(b-1/2)}\frac{1-\rho^{N+1}}{1-\rho}+\rho^N \\
    &\le 2 L^{\theta+C(b-1/2)}\frac{1}{1-\rho}=2 L^{\theta+C(b-1/2)}\frac{1}{1-L^{-\delta}} 
\end{align*}
for $N$ large enough. This bound is valid for $u$ solution to \eqref{weaknls}. We multiply by $\al$ to get a solution to \eqref{nls} and get the desired bound.

\section{Proof of first bound} \label{proofbound1}
We want to prove the first bound i.e. Proposition \ref{bound1}. We first prove a lemma that allows us to go from a bound on the $L_\o^2(\O)$ norm to a $L^\infty$ bound with high probability.
\begin{lemma} \label{hypercontractivity} Let $g:=(g_{i})_{i\in \Z^d}$ be i.i.d. centered normal complex Gaussian random variables. Define for $(a_{i})_{i\in \Z^d}$ constants in $\C$ : 
    \begin{align*}
        F(w):=\sum_{k_1,...,k_n\in \Z^d}a_{k_1}...a_{k_n}g_{k_1}...g_{k_n} \,.
    \end{align*}
    Then if $S_n$ denotes the set of permutations of finite sets of size $n$ :
    \begin{align*}
        \E\big[|F(w)|^2\big] = \sum_{k_1,...,k_n\in \Z^d}\sum_{\sigma \in S_n}a_{k_1}...a_{k_n} \cj{a_{k_{\sigma(1)}}...a_{k_{\sigma(n)}}} 
    \end{align*}
    and there exists $C,c >0$ independent of $n$ such that :
    \begin{align*}
        \P\Big(|F(\o)|\ge A \E\big[|F(w)|^2\big]^{1/2} \Big)\le Ce^{-cA^{\frac{2}{n}}}
    \end{align*}
    for $A \ge 0 $.
\end{lemma}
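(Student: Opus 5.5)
The plan is to treat the two claims separately: the second-moment identity follows from the Wick (Isserlis) formula for complex Gaussians, and the tail bound follows from Gaussian hypercontractivity combined with Markov's inequality and an optimized choice of moment. To avoid convergence issues, I would first assume $(a_k)$ has finite support, so that $F$ is a finite polynomial in the $g_k$. For general $(a_k)\in l^2$, I would pass to the limit using $L^2_\o$ convergence of the truncations, which follows from the identity itself once it is proved in the finite case, together with Fatou's lemma for the probability bound.

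For the identity, I expand
$$\E|F|^2=\sum_{k_1,\dots,k_n}\sum_{k_1',\dots,k_n'}a_{k_1}\cdots a_{k_n}\,\cj{a_{k_1'}\cdots a_{k_n'}}\;\E\big[g_{k_1}\cdots g_{k_n}\cj{g_{k_1'}\cdots g_{k_n'}}\big].$$
The key input is the complex Wick formula
$$\E\big[g_{k_1}\cdots g_{k_n}\cj{g_{k_1'}\cdots g_{k_n'}}\big]=\sum_{\s\in S_n}\prod_{i}\delta_{k_i'=k_{\s(i)}}.$$
This holds because $\E[g g]=0$, so only pairings between unconjugated and conjugated factors survive. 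I would check it also in the presence of repeated indices. If an index $k$ appears $m$ times among the $k_i$ and $m$ times among the $k'_i$, the number of contributing permutations on those slots is $m!$, which matches $\E|g_k|^{2m}=m!$. If the multiplicities differ, both sides vanish. Substituting this formula and summing out the $k'$ gives exactly the stated expression.

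For the tail bound, I note that $F$ is a homogeneous polynomial of degree $n$ in the real Gaussians $\Re g_k,\Im g_k$, and hence lies in the $n$-th Wiener chaos. Nelson's hypercontractivity then gives, for $p\ge2$,
$$\|F\|_{L^p_\o}\le (p-1)^{n/2}\|F\|_{L^2_\o}.$$
By Markov's inequality,
$$\P\big(|F|\ge A\|F\|_{L^2}\big)\le\Big(\frac{p^{n/2}}{A}\Big)^p .$$
I would choose $p=A^{2/n}/e$. This makes the bound equal to $e^{-np/2}=e^{-nA^{2/n}/(2e)}\le e^{-A^{2/n}/(2e)}$, and this choice is admissible whenever $p\ge2$, that is, $A^{2/n}\ge 2e$. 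When $A^{2/n}<2e$, I would use the trivial bound $\P\le1\le e\cdot e^{-A^{2/n}/(2e)}$. Together these give the claim with $c=1/(2e)$ and $C=e$, both independent of $n$.

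The main point needing care is the uniformity in $n$ of the constants. This is exactly why I use the sharp Nelson constant $(p-1)^{n/2}$ and optimize over $p$ as above, rather than invoking an abstract equivalence of norms on the $n$-th chaos, whose constants could depend on $n$. A secondary point is the combinatorics of Wick's formula with coinciding indices, which is handled by the multiplicity count above.
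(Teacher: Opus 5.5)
Your proposal is correct and follows essentially the paper's route: Isserlis/Wick pairings for the second-moment identity, then the hypercontractive bound $\|F\|_{L^p}\le (p-1)^{n/2}\|F\|_{L^2}$ combined with Chebyshev/Markov, an optimized choice $p\sim A^{2/n}$ with the range $p<2$ handled trivially, and approximation by finite sums. One small correction: a homogeneous degree-$n$ polynomial in $\Re g_k,\Im g_k$ need not lie in the $n$-th Wiener chaos (e.g.\ $(\Re g)^2$). This does not affect your argument, since $F$ contains no conjugates, and in any case the $(p-1)^{n/2}$ bound holds on the whole sum of chaoses of order $\le n$, which is the form the paper invokes.
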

\begin{proof}
  The first inequality comes from Isserlis' theorem for complex standard Gaussians (see~~\cite{Isserlistheorem}) which states that if $(X_1,...,X_n)$ is a zero-mean multivariate normal random vector, then~~:
  \begin{align*}
      \E[\prod_{i=1}^n X_i] =\sum_{p\in P_n}\prod_{\{i,j\}\in p}\text{Cov}(X_i,X_j)
  \end{align*}
  where $P_n$ is the set of pairings in $\{1,...,n\}$. \\
  In our case, this gives :
  \begin{align*}
       \E\big[|F(w)|^2\big] &=\E\Big[ \sum_{k_1,...,k_n\in \Z^d}a_{k_1}...a_{k_n} g_{k_1}...g_{k_n}\cj{\sum_{k_1',...,k_n'\in \Z^d}a_{k_1'}...a_{k_n'} g_{k_1'}...g_{k_n'}}\Big] \\
       &= \sum_{k_1,...,k_n\in \Z^d}\sum_{k_1',...,k_n'\in \Z^d}a_{k_1}...a_{k_n} \cj{a_{k_1'}...a_{k_n'}}\E\Big[g_{k_1}...g_{k_n} \cj{g_{k_1'}}...\cj{g_{k_n'}}\Big] \, .
       \end{align*}
       So , if we note $X:=(g_{k_1},...,g_{k_n},\cj{g_{k_1'}},...,\cj{g_{k_n'}})$ indexed by $1\le i \le 2n$, we get :
       \begin{align*}
       \E\big[|F(w)|^2\big] &=\sum_{k_1,...,k_n\in \Z^d}\sum_{k_1',...,k_n'\in \Z^d}a_{k_1}...a_{k_n} \cj{a_{k_1'}...a_{k_n'}}\sum_{p\in P_{2n}}\prod_{\{i,j\}\in p}\text{Cov}(X_i,X_j) \\ 
       &=\sum_{k_1,...,k_n\in \Z^d}\sum_{k_1',...,k_n'\in \Z^d}a_{k_1}...a_{k_n} \cj{a_{k_1'}...a_{k_n'}}\sum_{p\in P_{2n}}\prod_{\{i,j\}\in p}\delta_{k_i=k'_{j-n}} \\
       &=\sum_{k_1,...,k_n\in \Z^d}\sum_{\sigma \in S_n}a_{k_1}...a_{k_n} \cj{a_{k_{\sigma(1)}}...a_{k_{\sigma(n)}}} \, .
  \end{align*}
  
  \noi For the second inequality we use hypercontractivity of the Ornstein-Uhlenbeck semigroup as in \cite{hypercontractivity}. We recall hypercontractivity states that for $(g_i)_{i\in \N}$ i.i.d centered normal Gaussians (real or complex valued), for $J$ a finite set and $\big(P_n^{(j)}\big)_{j\in J}$ finite sequence of polynomials of degree less than $n$, for $p\ge 2$, we have (see for example \cite{tzvetkovhypercontract} for a proof):
    \begin{align}\label{boundhyper}
        \big\|\sum_{j\in \N}P_n^{(j)}(g)\big\|_{L^p(\O)} \le (p-1)^{n/2}\big\|\sum_{j\in \N}P_n^{(j)}(g)\big\|_{L^2(\O)}
    \end{align}
    where the valuation in $g$ of $P_n^{(j)}$ means one can choose any $n$-tuple of Gaussians in the family $g$ to evaluate each $P_n^{(j)}$. \\
    
    \noi Now to prove Lemma \ref{hypercontractivity}, we can first assume that $\E\big[|F|^2\big]<+\infty$ as otherwise, there is nothing to show. \\
    \noi Now set $(F_k)_{k\in \N}$ a sequence of finite sums of polynomials of degree $\le n$ in $g$ such that $F_k(g(\o))\xrightarrow[k\rightarrow +\infty]{a.s} F(\omega)$. As $F_k$ is a finite sum of polynomials, $F_k(g)$ has finite moments. On can assume up to renormalization that $\E\big[|F_k(g)|^2\big]=1$ for all $k\in \N$. Note that by Fatou's lemma, $\E\big[|F|^2\big]\le \lim \inf_{k\in \N} \E\big[|F_k(g)|^2\big] = 1$. \\
    \noi This approximation will work because bound \eqref{boundhyper} does not depend on the number of different i.i.d Gaussians involved, just on the maximum degree of the monomials.\\
    \noi We then write by Chebyshev inequality :
    \begin{align*}
        \P\Big(|F_k(g(\o))|\ge A \E\big[|F_k(g(\o))|^2\big]^{1/2} \Big)&\le \frac{\|F_k(g)\|_{L_\omega^p}^p}{A^p} \\
        & \le (p-1)^{pn/2}/A^p
    \end{align*}
    Choose $p$ so that $\frac{(p-1)^{n/2}}{A}=e^{-1}$ i.e. $p=1+(e^{-1}A)^{2/n}$ which is larger than 2 for $A\ge e$. Then :
    \begin{align*}
        \P\Big(|F_k(g(\o))|\ge A \E\big[|F_k(g(\o))|^2\big]^{1/2} \Big) &\le e^{-(1+(e^{-1}A)^{2/n})} \\
        & \le e^{1}e^{-e^{-2}A^{2/n}} \, .
    \end{align*}
    Now we only have to treat the case where $0\le A \le e$. In that case for all $n\ge 1$ :
    \begin{align*}
        e^{-A^{2/n}} \ge e^{-e^{2/n}} \ge e^{-e^2} \ge C 1 \ge C\P\Big(|F_k(g(\o))|\ge A \E\big[|F_k(g(\o))|^2\big]^{1/2} \Big) \, .
    \end{align*}
    Hence the result for $F_k(g)$ for $A\ge 0$. \\

\noi Then we take $k\rightarrow +\infty$ and get :
\begin{align*}
    \P\Big(|F(\o)|\ge A \E\big[|F(\o)|^2\big]^{1/2} \Big) &= \P\Big(\liminf_{k \in \N}|F_k(g(\o))|\ge A \E\big[|F(\o)|^2\big]^{1/2} \Big) \\
    & \le  \P\Big(\liminf_{k \in \N}|F_k(g(\o))|\ge \liminf_{k \in \N}A \E\big[|F_k(g(\o))|^2\big]^{1/2} \Big) \, .
\end{align*}
Then we use the fact convergence almost surely implies convergence in probability and the inequality right above to get :
\begin{align*}
    \P\Big(|F(\o)|\ge A \E\big[|F(\o)|^2\big]^{1/2} \Big) 
    & \le \P\Big(\liminf_{k \in \N}|F_k(g(\o))|\ge A \Big) \\
    &=\lim_{k\in \N} \P\Big(|F_k(g(\o))|\ge A \Big) \\
    & \le Ce^{-cA^{2/n}} \, .
\end{align*}
Hence the result.
\end{proof}

    \noi We now prove Proposition \ref{bound1}. 
    
    \subsection{Case of Theorem \ref{theorem1}}\label{subsection-1} In this case, we can conclude by showing a weaker result than Proposition \ref{bound1}, namely bound \eqref{weakerbound} below. This result will be enough to finish our proof. Alternatively,  we can prove Proposition \ref{bound1} by using the method in Subsection \ref{subsection0} below and applying only trivial counting estimates. \\ 
    
    \noi We prove directly a bound on the $\Tilde{h}^{s,b}$ norm of the terms $J_\mathcal{T}$, namely bound \eqref{weakerbound} below. This bound is implied by Proposition \ref{bound1} and is all we need for the rest of the proof. 
    We show by induction that there exists $c>0$ such that for a tree of scale $n$, for every $\o \in \O$~~:
    \begin{equation*}
        \|J_\mathcal{T}(\o)\|_{\Tilde{h}^{s,b}} \le c^n \rho^n \|J_\cdot(\o)\|_{\Tilde{h}^{s,b}}^{n+1} \, .
    \end{equation*}
    For the trivial tree $\cdot$, this is obvious.
    Now let us suppose the result is true for all trees of scale $\le n-1$. Let $\mathcal{T}$ be a tree of scale $n$ non trivial made from the concatenation of the trees $\mathcal{T}_1$ and $\mathcal{T}_2$. For some fixed $\o \in \O$, we use a variant of Proposition \ref{duhamel} below with suitable parameters and the fact that $\tilde{h}^s$ is an algebra for $s>d/2$ (or a variant of Proposition \ref{multilinBourgain estimate} below) and get :
    \begin{align*}
        \|J_\mathcal
        T\|_{\Tilde{h}^{s,b}} &= \|IB(J_{\mathcal{T}_1}(\o),J_{\mathcal{T}_2}(\o))\|_{\Tilde{h}^{s,b}} \\
        & \le c L^{d/2} \frac{\al T_{max}}{L^{d/2}}\|J_{\mathcal
        T_1}\|_{\Tilde{h}^{s,b}}\|J_{\mathcal
        T_2}\|_{\Tilde{h}^{s,b}} \\
        &=\rho \|J_{\mathcal
        T_1}\|_{\Tilde{h}^{s,b}}\|J_{\mathcal
        T_2}\|_{\Tilde{h}^{s,b}}
    \end{align*}
hence the result by induction hypothesis. \\

\noi Now recall that $$J_{\cdot}(t,k)=\chi(t)\sqrt{n_{in}(k)}g_{Lk}(w) \, .$$

\noi For $\tau$ fixed, we apply Lemma \ref{hypercontractivity} with $A=L^{\theta}$. \\
    \noi We get $L-$certainly :
    \begin{align*}
        \jb{k}^{4s}\left|\Ft_{t}(J_{\cdot})(\tau,k)\right|^2 & \les\jb{k}^{4s} L^\theta \E[\left|\Ft_{t}(J_{\cdot})(\tau,k)\right|^2] \\
        &=\jb{k}^{4s}L^\theta \Ft_t(\chi)(\tau)^2 n_{in}(k)
    \end{align*}
    Hence, once we sum in $k$ and integrate in $\tau$, we get that $L-$certainly for a tree of scale $n$ :
    \begin{align} \label{weakerbound}
        \|J_\mathcal
        T\|_{\Tilde{h}^{s,b}}  \le c^n\rho^n L^{\theta(n+1)}
    \end{align}
    where the constant $c$ has been modified. \\
     However $c$ is independent of $L$ and $n\le N$ hence we get our result for $L$ large enough.

    \subsection{Case of Theorem \ref{theorem2}} \label{subsection0}

    Let $\mathcal{T}$ be a tree of scale $0\le n\le 2N$. We will denote by $k_l : l \in \mathcal{L}$ admissible decorations of $\mathcal{T}$. \\

    \noi For $\tau$ fixed, we apply Lemma \ref{hypercontractivity} with $A=L^{\theta}$. \\
    \noi We will denote by $\mathcal{T}'$ a copy of the tree $\mathcal{T}$ where the wave numbers of the leaves have been permuted by some $\s \in S_{n+1}$ (we recall the entirety of the decoration of $\mathcal{T'}$ is determined by the decoration of its leaves) ; we will use the abusive notation for the tree $\mathcal{T}'$ : $ \mathcal{N}' ~=:~\{\s(n), \ n~\in~\mathcal{N}~\}$, $\mathcal{L}'=:\{\s(l), \ l\in \mathcal{L}\}$, $\s(r)$ its root. This tree will be used to represent the conjugate term of $\Ft_{t}(J_{\mathcal{T}})(\tau,k)$ when we compute its $L^2(\O)$ norm. \\
    \noi We get $L-$certainly for a tree $\mathcal{T}$ of scale $n$ :
    \begin{align*}
        \jb{k}^{4s}\left|\Ft_{t}(J_{\mathcal{T}})(\tau,k)\right|^2 &\les \sum_{\s \in S_{n+1}}L^\theta \left(\frac{\al T_{max}}{L^{d/2}}\right)^{2n}\sum_{\substack{k_l, \ l \in \mathcal{L}\\}}\left|K_\mathcal{T}(\tau, \ k_n:n\in \mathcal{T})\right|\left|K_\mathcal{T}(\tau, \ k_{\s(n)}:n\in \mathcal{T})\right| \\
        & \times \sqrt{n_{in}(k_l)n_{in}(k_{\s(l)})} \, .
    \end{align*}
     \noi We can assume that the Fourier modes of $u_0$ satisfy $|k_l|\le L^\theta$ thanks to the decay of $\sqrt{n_{in}}$. Indeed, otherwise, if there exists $l_0 \in \mathcal{L}$ such that $|k_{l_0}| \ge L^\theta$, then for any arbitrary $r>0$, we can bound the sum using rough bounds and Lemma \ref{lemmaK} :
    \begin{align*}
\sum_{\substack{k_l, \ l \in \mathcal{L}\\|k_{l_0 }|\ge L^\theta}}\left|K_\mathcal{T}(\tau, \ k_n:n\in \mathcal{T})\right|&\left|K_\mathcal{T}(\tau, \ k_{\s(n)}:n\in \mathcal{T})\right| \sqrt{n_{in}(k_l)n_{in}(k_{\s(l)})} \\ &\les_r L^{-\theta r} 4^n \sum_{\substack{k_l, \ l \in \mathcal{L}-\{l_0\}\\}}\sqrt{n_{in}(k_l)n_{in}(k_{\s(l)})} \\
& \les_r L^\theta L^{dn}L^{-\theta r} \\
& \les_r L^{\theta+dN-r\theta} \, .
    \end{align*}
    This implies, for an admissible decoration, that we can assume that $|k_n|\le L^\theta $ for all $n\in \mathcal{N}$ as $L^\theta$ absorbs the constant depending on $n$. \\
    
    \noi We get $L-$certainly for a tree $\mathcal{T}$ of scale $n$ :
    \begin{align*}
        \jb{k}^{4s}\left|\Ft_{t}(J_{\mathcal{T}})(\tau,k)\right|^2 \les \sum_{\s \in S_{n+1}}L^\theta \left(\frac{\al T_{max}}{L^{d/2}}\right)^{2n}\sum_{\substack{k_l, \ l \in \mathcal{L}\\ |k_l|\le L^\theta}}\left|K_\mathcal{T}(\tau, \ k_n:n\in \mathcal{T})\right|\left|K_\mathcal{T}(\tau, \ k_{\s(n)}:n\in \mathcal{T})\right| \, .
    \end{align*}
    \noi We take the supremum on the permutations $\s$, we bound the cardinal of permutations i.e. $\#S_{n+1}=(n+1)!$ by $L^\theta$ and get 
    \begin{align}\label{boundfirst}
        \jb{k}^{4s}\left|\Ft_{t}(J_{\mathcal{T}})(\tau,k)\right|^2 \les L^\theta \sup_{\s \in S_{n+1}}\left(\frac{\al T_{max}}{L^{d/2}}\right)^{2n}\sum_{\substack{k_l, \ l \in \mathcal{L}\\ |k_l|\le L^\theta}}\left|K_\mathcal{T}(\tau, \ k_n:n\in \mathcal{T})\right|\left|K_\mathcal{T}(\tau, \ k_{\s(n)}:n\in \mathcal{T})\right| \, .
    \end{align}
    We now fix $\s \in S_{n+1}$. We want to integrate in $\tau$ and bound the quantity :
    \begin{align*}
        \int_\R \jb{\tau}^{2b}\sum_{\substack{k_l, \ l \in \mathcal{L}\\ |k_l|\le L^\theta}}\left|K_\mathcal{T}(\tau, \ k_n:n\in \mathcal{T})\right|\left|K_\mathcal{T}(\tau, \ k_{\sigma(n)}:n\in \mathcal{T})\right|d\tau \, .
    \end{align*}
    \noi The issue is that we only have a bound on $K_\mathcal{T}$ for a fixed $\tau$ $L$-certainly. We will use the decay of $K_\mathcal{T}$ to get from a bound on a fixed $\tau$ to abound on all $\tau$ $L$-certainly. \\

    \noi We recall that $(q_n)_{n\in \mathcal{N}}$ is defined in Definition \ref{definitiontrees}. Because for an admissible decoration, for all $n\in \mathcal{N}$, $|k_n|\le L^\theta$, we have that $|q_n| \le L^{2\theta}$ for all $n \in \mathcal{N}$. We then argue that if $\tau > L^{d+\theta}T_{max}$ (we choose this arbitrarily but any condition $\tau \gg L^{2\theta}T_{max}$ would yield the same type of computation), then $|\tau-T_{max}j_rq_r| \ges \tau/2$ for $L$ large enough and so using Lemma~~\ref{lemmaK}, we get :
   
    \begin{align*}
        \int_{|\tau|>L^{d+\theta}T_{max}}\sum_{\substack{k_l, \ l \in \mathcal{L}\\ |k_l|\le L^\theta}}\left|K_\mathcal{T}(\tau, \ k_n:n\in \mathcal{T})\right|\left|K_\mathcal{T}(\tau, \ k_{\s(n)}:n\in \mathcal{T})\right|d\tau &\les_A L^{(n+1)d(1+\theta)}\int_{|\tau|>L^{d+\theta}T_{max}} \jb{\tau}^{-A}d\tau  \\
        &\les L^{(n+1)d(1+\theta)+(1-A)(d+\theta)}T_{max}^{1-A}
    \end{align*}
    for any $A>0$. Thus we can consider only $|\tau| \le L^{d+\theta}T_{max}$ up to a negligible error. \\
    
    \noi  Then we use the bound of Lemma \ref{lemmaK} on the differential of $K$ to get :
    \begin{align*}
        \left|K_\mathcal{T}(\tau, \ k_n:n\in \mathcal{T})-K_\mathcal{T}(\tau', \ k_n:n\in \mathcal{T})\right| \les L^\theta|\tau-\tau'| \, .
    \end{align*}
    We also notice that if we denote by $g(\tau,k_n  :  n\in \mathcal{T})$ the right hand side of inequality \eqref{estimateonK} in Lemma~~\ref{lemmaK}, then we have that :
    \begin{equation} \label{estimateong}
        |g(\tau,k_n  :  n\in \mathcal{T})|, \ |\partial_\tau g(\tau,k_n  :  n\in \mathcal{T})| \le L^\theta \, .
    \end{equation}
    Now we take the interval $[-L^{d+\theta}T_{max},L^{d+\theta}T_{max}]$ and split it into $2L^{d+\theta + D}T_{max}$ intervals of length $L^{-D}$ for some $D>0$. We fix one $\tau_j$ for each of these intervals and apply the bound~~$\eqref{boundfirst}$. Now for some $\tau \in [-L^{d+\theta}T_{max},L^{d+\theta}T_{max}]$, we pick $\tau_j$ in the same interval as $\tau$ and write~~:
    \begin{align*}
        \jb{k}^{4s}\left|(\Tilde{J_{\mathcal{T}}})_k(\tau)\right|^2 &\les L^\theta \left(\frac{\al T_{max}}{L^{d/2}}\right)^{2n}\sum_{\substack{k_l, \ l \in \mathcal{L}\\ |k_l|\le L^\theta}}\left|K_\mathcal{T}(\tau, \ k_n:n\in \mathcal{T})\right|\left|K_\mathcal{T}(\tau, \ k_{\s(n)}:n\in \mathcal{T})\right| \\
        & \le L^\theta \left(\frac{\al T_{max}}{L^{d/2}}\right)^{2n}\sum_{\substack{k_l, \ l \in \mathcal{L}\\ |k_l|\le L^\theta}}\Big(\left|K_\mathcal{T}(\tau_j, \ k_n:n\in \mathcal{T})\right|\left|K_\mathcal{T}(\tau_j, \ k_{\s(n)}:n\in \mathcal{T})\right| +L^\theta L^{-D}\Big) \, .
        \end{align*}
        We use the bounds \eqref{estimateong} on $g$ and get :
        \begin{align*}
        \jb{k}^{4s}\left|(\Tilde{J_{\mathcal{T}}})_k(\tau)\right|^2 & \les  L^\theta \left(\frac{\al T_{max}}{L^{d/2}}\right)^{2n}\sum_{\substack{k_l, \ l \in \mathcal{L}\\ |k_l|\le L^\theta}}\Big(|g(\tau_j,k_n  :  n\in \mathcal{T})||g(\tau_j,k_{\s(n)}  :  n\in \mathcal{T})| +L^\theta L^{-D}\Big) \\
        & \les  L^\theta \left(\frac{\al T_{max}}{L^{d/2}}\right)^{2n}\sum_{\substack{k_l, \ l \in \mathcal{L}\\ |k_l|\le L^\theta}}\Big(|g(\tau,k_n  :  n\in \mathcal{T})||g(\tau,k_{\s(n)}  :  n\in \mathcal{T})| +L^\theta L^{-D}\Big) \, .
    \end{align*}
    Now we recall that $T_{max} \le L^{C(\gamma,d)}$. By the preceding computations, we know that for $D$ fixed large enough, if we show that inequality 
    \begin{equation}\label{boundforallt}
        \jb{k}^{4s}\left|(\Tilde{J_{\mathcal{T}}})_k(\tau)\right|^2  \les  L^\theta \left(\frac{\al T_{max}}{L^{d/2}}\right)^{2n}\sum_{\substack{k_l, \ l \in \mathcal{L}\\ |k_l|\le L^\theta}}|g(\tau,k_n  :  n\in \mathcal{T})||g(\tau,k_{\s(n)}  :  n\in \mathcal{T})|
    \end{equation}| 
    is true for all $\tau_j$ L-certainly on an event $E_j \subset \O$, then it will be true for all $\tau\in [-L^{d+\theta}T_{max},L^{d+\theta}T_{max}]$ up to a negligible error on the event $\underset{j}{\bigcap}E_j$. We notice that for $D(\gamma,d)$ large, there are at most $L^{2D}T_{max}$ intervals and so the bound \eqref{boundforallt} is true for all $\tau_j$ on an event of probability $\ge 1-L^{2D}T_{max}Ke^{-cL^{\theta}}\ge 1-Ke^{-cL^\theta}$ by modifying the constants $\theta$ and $K$. \\

    \noi We can now bound the quantity :
    \begin{align*}
        \Lambda:=\int_\R \jb{\tau}^{2b}\sum_{\substack{k_l, \ l \in \mathcal{L}\\ |k_l|\le L^\theta}}\left|K_\mathcal{T}(\tau, \ k_n:n\in \mathcal{T})\right|\left|K_\mathcal{T}(\tau, \ k_{\sigma(n)}:n\in \mathcal{T})\right|d\tau 
    \end{align*}
    by integrating in $\tau \in \R$, $|\tau|\le L^{d+\theta}T_{max}$ the expression :
    \begin{multline*}
        \jb{\tau}^{2b}\sum_{\substack{k_l, \ l \in \mathcal{L}\\ |k_l|\le L^\theta}}|g(\tau,k_n  :  n\in \mathcal{T})||g(\tau,k_{\s(n)}  :  n\in \mathcal{T})| \\ \les \sum_{j_r,j_{\s(r)}\in \{0,1\}}\Big|\left\{\text{choices for $j_n,j_{\s(n)}$}, \ n\in \mathcal{N}-\{r\}\right\}\Big| 
         \sup_{j_n,j_{\s(n)}, \ n\in \mathcal{N}-\{r\}}\sum_{\substack{k_l, \ l \in \mathcal{L}\\ |k_l|\le L^\theta}}\jb{\tau}^{2b} \\ \times \jb{\tau - T_{max}j_rq_r}^{-10}\jb{\tau-T_{max}j_{\s(r)}q_{\s(r)}}^{-10}\prod_{n\in \mathcal{N}}\jb{T_{max}q_n}^{-1}\jb{T_{max}q_{\sigma(n)}}^{-1} \, .
    \end{multline*}
    So once we integrate, we get up to a negligible error :
    \begin{align}
         \Lambda &\les \int_{|\tau|\le L^{d+\theta}T_{max}} \jb{\tau}^{2b}\sum_{\substack{k_l, \ l \in \mathcal{L}\\ |k_l|\le L^\theta}}|g(\tau,k_n  :  n\in \mathcal{T})||g(\tau,k_{\s(n)}  :  n\in \mathcal{T})|d\tau \notag \\
         & \les L^\theta \int_\R \sum_{j_r,j_{\s(r)}\in \{0,1\}}\sup_{j_n,j_{\s(n)}, \ n\in \mathcal{N}-\{r\}}\sum_{\substack{k_l, \ l \in \mathcal{L}\\ |k_l|\le L^\theta}}\frac{\jb{\tau}^{2b}}{\jb{T_{max}q_r}\jb{T_{max}q_{\s(r)}}} \jb{\tau - T_{max}j_rq_r}^{-10} \notag \\
        & \hspace{5 cm} \times \jb{\tau-T_{max}j_{\s(r)}q_{\s(r)}}^{-10}\prod_{\substack{n\in \mathcal{N} \\ n \neq r}}\jb{T_{max}q_n}^{-1}\jb{T_{max}q_{\sigma(n)}}^{-1} d\tau \, .  \label{technicaluse}
        \end{align}
        We use a technique similar to the proof of Lemma \ref{lemmaK}. More precisely, we use Lemma \ref{technical} below with $b>1/2$, $n=10$, $\al=T_{max}j_rq_r$, $\beta=T_{max}j_{\s(r)}q_{\s(r)}$. We obtain, using additionally the triangular inequality  :
        \begin{align}
         \Lambda
        & \les L^\theta \sum_{j_r,j_{\s(r)}\in \{0,1\}} \sup_{j_n,j_{\s(n)}, \ n\in \mathcal{N}-\{r\}}\sum_{\substack{k_l, \ l \in \mathcal{L}\\ |k_l|\le L^\theta}} \left[\max \left(\jb{T_{max}q_r},\jb{T_{max}q_{\s(r)}}\right)^{-2+2b}\jb{T_{max}(j_rq_r-j_{\s(r)}q_{\s(r)})}^{-5}\right] \notag \\
        & \hspace{9 cm} \times \prod_{\substack{n\in \mathcal{N} \\ n \neq r}}\jb{T_{max}q_n}^{-1}\jb{T_{max}q_{\sigma(n)}}^{-1} \, .
    \end{align}
    \noi Now we have to bound :
    \begin{multline*}
       \sup_{j_n,j_{\s(n)}, \ n\in \mathcal{N}-\{r\}} \sum_{j_r,j_{\s(r)}\in \{0,1\}}\sum_{\substack{k_l, \ l \in \mathcal{L}\\ |k_l|\le L^\theta}}\left[\max \left(\jb{T_{max}q_r},\jb{T_{max}q_{\s(r)}}\right)^{-2+2b}\jb{T_{max}(j_rq_r-j_{\s(r)}q_{\s(r)})}^{-5}\right]\\
        \times \prod_{\substack{n\in \mathcal{N} \\ n \neq r}}\jb{T_{max}q_n}^{-1}\jb{T_{max}q_{\sigma(n)}}^{-1}
    \end{multline*}
    where we recall $T_{max}\le L^{C(\gamma,d)}$ and $\forall \, n\in \mathcal{N}, \ q_n \le L^{2\theta}$. \\
    \noi Let us denote by $[.]$ the integer part function. Now we fix the integer part of $T_{max}q_n, T_{max}q_{\s(n)}$ for all $n\in \mathcal{N}$ at the price of $L^{C(b-1/2)}$. This is justified by the following computation :
    \begin{align*}
        &\sup_{j_n,j_{\s(n)}, \ n\in \mathcal{N}-\{r\}}\sum_{j_r,j_{\s(r)}\in \{0,1\}} \sum_{\substack{k_l, \ l \in \mathcal{L}\\ |k_l|\le L^\theta}} \left[\max \left(\jb{T_{max}q_r},\jb{T_{max}q_{\s(r)}}\right)^{-2+2b}\jb{T_{max}(j_rq_r-j_{\s(r)}q_{\s(r)})}^{-5}\right] \\
        & \hspace{10 cm} \times \prod_{\substack{n\in \mathcal{N} \\ n \neq r}}\jb{T_{max}q_n}^{-1}\jb{T_{max}q_{\sigma(n)}}^{-1} \\
        &\les \sup_{j_n,j_{\s(n)}, \ n\in \mathcal{N}-\{r\}} \sum_{\substack{(u_n)_{n\in \mathcal{N}},(v_n)_{n\in \mathcal{N}} \in \Z^{\mathcal{N}}\\\forall n \in \mathcal{N}, |u_n|,|v_n|\le L^\theta L^{C(\gamma,d)}}}\sum_{\substack{k_l, \ l \in \mathcal{L}\\ |k_l|\le L^\theta}} \sum_{j_r,j_{\s(r)}\in \{0,1\}}\ind([T_{max}q_n]=u_n)\ind([T_{max}q_{\s(n)}]=v_n)\\
        & \hspace{5cm}\times \left[\max \left(\jb{u_r},\jb{v_r}\right)^{-2+2b}\jb{j_ru_r-j_{\s(r)}v_r}^{-5}\right]  \prod_{\substack{n\in \mathcal{N} \\ n \neq r}}\jb{u_n}^{-1}\jb{v_n}^{-1} 
        \end{align*}
        which is smaller by using the elementary property $\sum_j a_jb_j \le \sup_ja_j \sum_jb_j$ than
        \begin{multline*}
         L^\theta  \sup_{\substack{(u_n)_{n\in \mathcal{N}},(v_n)_{n\in \mathcal{N}} \in \Z^\mathcal{N}\\ \forall \, n \in \mathcal{N},|u_n|,|v_n|\le L^{C(\gamma,d)+\theta}}} \sum_{\substack{k_l, \ l \in \mathcal{L}\\ |k_l|\le L^\theta}} \ind([T_{max}q_n]=u_n)\ind([T_{max}q_{\s(n)}]=v_n)\\
         \times \sum_{\substack{(u_n)_{n\in \mathcal{N}},(v_n)_{n\in \mathcal{N}} \in \Z^{\mathcal{N}}\\\forall n \in \mathcal{N}, |u_n|,|v_n|\le L^\theta L^{C(\gamma,d)}}}\left[\max \left(\jb{u_r},\jb{v_r}\right)^{-2+2b}\jb{j_ru_r-j_{\s(r)}v_r}^{-5}\right]  \prod_{\substack{n\in \mathcal{N} \\ n \neq r}}\jb{u_n}^{-1}\jb{v_n}^{-1}  \, .
        \end{multline*}
        We compute the sum and obtain it to be smaller than :
        \begin{align*}
        &\les \Big(\log\big(L^\theta L^{C(\gamma,d)}\big)\Big)^{2n}L^\theta L^{ C(\gamma,d)(1-2b)}\sup_{\substack{(u_n)_{n\in \mathcal{N}},(v_n)_{n\in \mathcal{N}} \in \Z^\mathcal{N}\\ \forall \, n \in \mathcal{N},|u_n|,|v_n|\le L^\theta L^{C(\gamma,d)(\gamma,d)}}}\sum_{\substack{k_l, \ l \in \mathcal{L}\\ |k_l|\le L^\theta}} \ind([T_{max}q_n]=u_n)\ind([T_{max}q_{\s(n)}]=v_n) \\
        &\les L^{C(\gamma,d)(b-1/2)}\sup_{\substack{(u_n)_{n\in \mathcal{N}},(v_n)_{n\in \mathcal{N}} \in \Z^\mathcal{N}\\ \forall \, n \in \mathcal{N},|u_n|,|v_n|\le L^\theta L^{C(\gamma,d)}}}\sum_{n\in \mathcal{N}}\sum_{\substack{k_l, \ l \in \mathcal{L}\\ |k_l|\le L^\theta}} \ind([T_{max}q_n]=u_n)
    \end{align*}
    where we have ignored the admissible conditions relative to $\mathcal{T'}$. However, we lose nothing by ignoring them in the general case as one possible scenario is that $\mathcal{T}$ and $\mathcal{T'}$ have the same decorations and thus all conditions are the same for both trees. \\
    
    \noi Now we give a bound to the counting problem :
    \begin{equation} \label{initialcountingbound}
        \sup_{\substack{(u_n)_{n\in \mathcal{N}},(v_n)_{n\in \mathcal{N}} \in \Z^\mathcal{N}\\ \forall \, n \in \mathcal{N},|u_n|,|v_n|\le L^{C(\gamma,d)+\theta}}}\sum_{n\in \mathcal{N}}\sum_{\substack{k_l, \ l \in \mathcal{L}\\ |k_l|\le L^\theta}} \ind([T_{max}q_n]=u_n) \, .
    \end{equation}
    If we now denote by $M_{\mathcal{T}}$ the result of the counting \eqref{initialcountingbound} for a decorated tree $\mathcal{T}$ (we omit the dependence on the decoration here), we will get, putting all we have done so far together :
    \begin{align*}
        \int_\R \jb{\tau}^{2b}\jb{k}^{4s}\left|\Ft_{t}(J_{\mathcal{T}})(\tau,k)\right|^2 d\tau \le L^{\theta + C(b-1/2)}\left(\frac{\al T_{max}}{L^{d/2}}\right)^{2n}M_{\mathcal{T}} \, .
    \end{align*}
    Now we compute a bound on $M_\mathcal{T}$.
    \noi Let $(u_n)_{n \in \mathcal{N}} \in \Z^\mathcal{N}$ be fixed. We notice that fixing every integer part of $T_{max}q_n$ for $n \in \mathcal
    N$ is equivalent to fixing the integer part of every $T_{max}\O_n$ as $\O_n$ and $q_n$ are linear combinations one of another. Thus fixing $(u_n)_{n \in \mathcal{N}}$ is equivalent to fixing $(\s_n)_{n\in \mathcal{N}} \in \R^\mathcal{N}$ and looking at the situation where :
    \begin{align*}
         \forall \, n \in \mathcal{N}, \ |\sigma_n-\O_n|\le T_{max}^{-1} \, .
    \end{align*}
     We thus have that :
    \begin{align} \label{secondboundstatement}
        M_{\mathcal{T}}= \# \left\{(k_n)_{n\in \mathcal{N}} \ \text{admissible}, \, |k_l|\le L^\theta, \ \forall \, n \in \mathcal{N}, |\O_n-\sigma_n|\le T_{max}^{-1}, \ k_r=k  \right\}
    \end{align}
    where $\#A$ denotes the cardinal of a set $A$. \\

    \noi Our goal is to show that
    \begin{equation*}
        M_{\mathcal{T}} \le \left(L^dL^\theta T_{max}^{-1} \right)^n \, .
    \end{equation*}

    \noi We consider the simpler problem, corresponding to the counting for a single node :
    \begin{align*}
        m:=\# \left\{ k_1, \ k_2 \in \Z_L^d, \ k_1+k_2=k, \ |2k_1\cdot k_2-\sigma|\le T_{max}^{-1}, \ |k_i|\le L^\theta  \right\} \ \ \ \text{for $\sigma \in \R$, $k\in \Z_L^d$ fixed.}
    \end{align*}
    and we have
        \begin{align*}
            m = \# \left\{ k_1 \in \Z^d, \ |2k_1\cdot (k-k_1)-\sigma L^{2}|\le L^{2}T_{max}^{-1}, \ |k_1|,|k|\le L^{1+\theta}  \right\} \, .
        \end{align*}
    
     \noi \textbf{We first treat the more complicated case of Theorem \ref{theorem2} where we use divisor bounds.} Skip to \eqref{skip1} for the case of Theorem \ref{theorem1}. \\
    
    \noi We fix the first $d-2$ coordinates and get :
    \begin{align*} 
       & m  \le L^{(d-2)(1+\theta)}\# \big\{ N_1, N_2 \in \Z, \ |2N_1 (k_1-N_1)+2N_2 (k_2-N_2)-\sigma L^{2}|\le L^{2}T_{max}^{-1}, \\
       & \hspace{11cm}  |N_1|,|N_2|,|k_1|,|k_2|\le L^{1+\theta}  \big\} \,.
       \end{align*}
       For $N_1, N_2,k_1,k_2 \in \Z$ such that $|N_1|,|N_2|,|k_1|,|k_2|\le L^{1+\theta}$, we rearrange the expression to make a sum of squares appear :
       \begin{align*}
       |2N_1 (k_1-N_1)+2N_2 (k_2-N_2)-\sigma L^{2}| & =   |-2(N_1-\frac{k_1}{2})^2+k_1^2/2-2(N_2-\frac{k_2}{2})^2+k_2^2/2-\sigma L^{2}| \\
        &=  \frac{1}{2}|-(2N_1-k_1)^2+k_1^2-(2N_2-k_2)^2+k_2^2-2\sigma L^{2}| \, .
    \end{align*}
    We have thus :
     \begin{align*}
        m &\le  L^{(d-2)(1+\theta)} \# \big\{ N_1, N_2 \in \Z, \ |-(2N_1-k_1)^2+k_1^2-(2N_2-k_2)^2+k_2^2-2\sigma L^{2}|\le 2L^{2}T_{max}^{-1}, \\
        & \hspace{11 cm}|N_1|,|N_2|,|k_1|,|k_2|\le L^{1+\theta} \big\} \, .
    \end{align*}
    Now, we will use a divisor bound argument (see the bound of line (4.6) in \cite{countingTzvetkov} for reference). \\
    \noi Set $r\in \left(-k_1^2-k_2^2+2\sigma L^{2}+[-2L^2T_{max}^{-1},2L^2T_{max}^{-1}]\right)\cap \Z$. We consider :
    \begin{align*}
        \# \left\{ N_1, N_2 \in \Z, \ -(2N_1-k_1)^2-(2N_2-k_2)^2=r, \ |N_1|,|N_2|,|k_1|,|k_2|\le L^{1+\theta} \right\} \, .
    \end{align*}
    For $r=0$, it is bounded by $1$. \\

    \begin{itemize}
        \item  First we treat the case where $0<|r|\le L^{(1+\theta)40000}$. \\
    \noi The usual bound by divisors in $\Z[i]$ is : $$\les_\eps L^{(1+\theta)\eps} \, .$$ 
    
    \item Now we treat the case when $|r|> L^{(1+\theta)40000}$. We can suppose without loss of generality that $|2N_1-k_1|\le |2N_2-k_2|$. \\
    \noi First we have that $|k_1|\le L^{1+\theta}$ so $|2N_1-k_1|\le CL^{1+\theta}$. \\
    \noi If $2N_2-k_2 \ge 0$, then we have : $2N_2 \in k_2 +\left[\sqrt{|r|-C(L^{1+\theta})^2},\sqrt{|r|}\right]$ and $\#\left(\left[\sqrt{|r|-C(L^{1+\theta})^2},\sqrt{|r|}\right]\cap \Z\right) \les \sqrt{|r|}-\sqrt{|r|-C(L^{1+\theta})^2} \les \frac{L^{2(1+\theta)}}{L^{(1+\theta)40000}} $ and so we get the bound $$L^{1+\theta}\frac{L^{2(1+\theta)}}{L^{(1+\theta)40000}}\les 1 \, .$$ \\
    \noi Then we take the case where $2N_2-k_2 <0$ and count the $N_2$ such that $2N_2 \in k_2 +\left[-\sqrt{|r|},-\sqrt{|r|-c(L^{1+\theta})^2}\right]$ which yields the same bound. \\
    \end{itemize}
    \noi In summary, we get :
    \begin{align*}
        \# \left\{ N_1, N_2 \in \Z, \ -(2N_1-k_1)^2-(2N_2-k_2)^2=r, \ |N_1|,|N_2|,|k_1|,|k_2|\le L^{1+\theta}  \right\} \le L^{(1+\theta)\eps}
    \end{align*}
    which yields by summing in $r$ :
    \begin{align*}
        m\les L^{(d-2)(1+\theta)}L^2 T_{max}^{-1}L^{0+} \les L^\theta L^dT_{max}^{-1} \, .
    \end{align*}
    
    \noi Now we can bound $M_{\mathcal{T}}$ by finite induction. \\
    If a tree $\mathcal{T}'$ with root decorated by $k$ has $ \mathcal{T}_1$ and $\mathcal{T}_2$ as children with respective root decorated with $k_1$ and $k_2$ and if we denote by $M_{\mathcal{T}'}$ the counting bound \eqref{secondboundstatement} corresponding to a decorated tree $\mathcal{T}'$, then :
    \begin{align*}
        M_{\mathcal{T}'} \les \sum_{\substack{k_1,k_2\in \Z_L^d \\ k_1+k_2=k \\ |k_1|,|k_2|\les L^\theta}}M_{\mathcal{T}_1}M_{\mathcal{T}_2} \le \left(L^dL^\theta T_{max}^{-1} \right)\sup_{\substack{k_1,k_2\in \Z_L^d \\  |k_1|,|k_2|\les L^\theta}} M_{\mathcal{T}_1}M_{\mathcal{T}_2} \, .
    \end{align*}
    By iterating this at each branching nodes, that is to say $n$ times, we thus get :
    \begin{align*}
        M_\mathcal{T} \le \left(L^dL^\theta T_{max}^{-1} \right)^n \, .
    \end{align*}
     \textbf{In the context of Theorem \ref{theorem1}, one would only use the trivial bound :}
     \begin{equation} \label{skip1}
         m \le L^dL^\theta
     \end{equation}
     which implies by the same iteration
     \begin{equation*}
          M_{\mathcal{T}} \le (L^d L^\theta)^n \, .
     \end{equation*}
     In both cases we get the result with $\rho$ defined in \eqref{rhodefinition} :
    \begin{equation*}
        \sup_{k\in \Z_L^d}\jb{k}^{4s}\|J_\mathcal{T}\|_{h^b}^2\le L^{\theta + C(b-1/2)}\rho^{2n} \, .
    \end{equation*}

\noi Note that this implies for $s>d/2$ :
    \begin{align*}
        \|J_{\mathcal{T}}\|_{\tilde{h}^{s,b}}^2 &\le L^{\theta + C(b-1/2)}\left(\frac{\al T_{max}}{L^{d/2}}\right)^{2n}\left(L^dL^\theta T_{max}^{-1} \right)^n \\
        &\le L^{\theta + C(b-1/2)}\rho^{2n } \, .
    \end{align*}

\section{Proof of second bound} \label{proofbound2}
\noi We prove the operator bound stated in Proposition \ref{bound2}. Let $\mathcal{T}$ be a tree of scale $0\le n\le N$. The dependence on $\mathcal{T}$ of all the operators below will be omitted to get simpler notations. \\

\noi Let $v \in \tilde{h}^{s,b}$. We have :
\begin{equation*}
    \mathcal{P}(v)(t,k)=\chi(t)\int_0^t \chi(s)\left(\frac{-i \al T_{max}}{L^{d/2}}\right)\sum_{\substack{k_1,k_2 \in \Z_L^d \\ k_1+k_2=k}} (J_\mathcal{T})_{k_1}(s)v_{k_2}(s)e^{-2\pi i s 2T_{max} k_1 \cdot k_2} ds \, .
\end{equation*}
\subsection{Case of Theorem \ref{theorem1}}
In this case, we can conclude using only Bourgain methods and the bounds stated in Proposition \ref{bound1} or bound \eqref{weakerbound} we have already proved in Section~~\ref{proofbound1} and more precisely Subsection \ref{subsection-1}. We obtain the same result when we use the method in Subsection \ref{subsection2} below and apply only trivial counting estimates. \\
We write, using a variant of Proposition \ref{duhamel} below with suitable parameters and the fact that $\tilde{h}^s$ is an algebra for $s>d/2$ (or a variant of Proposition \ref{multilinBourgain estimate} below) :
\begin{align*}
    \|\mathcal{P}(v)\|_{\Tilde{h}^{s,b}} &= \|IB(J_{\mathcal{T}},v)\|_{\Tilde{h}^{s,b}} \\
    & \le \frac{\al T_{max}}{L^{d/2}}L^{d/2} \|J_{\mathcal{T}}\|_{\Tilde{h}^{s,b}}\|v\|_{\Tilde{h}^{s,b}} \\
    & \le \rho^{n+1}\|v\|_{\Tilde{h}^{s,b}} \, .
\end{align*}
Hence our result.
\subsection{Case of Theorem \ref{theorem2}} \label{subsection2}
In this case, we have to use other techniques. \\
\noi First we do reductions as in Section~\ref{proofbound1}. By decay of $n_{in}$, one may assume $|k_l|\le L^\theta$ for all $l \in \mathcal{L}$. This implies $ |k_1|\le L^\theta$ . Hence $$L^{-\theta} \les \frac{\jb{k}^s}{\jb{k_2}^s}\les L^\theta$$ as $k=k_1+k_2$. This allows us to consider only the $\Tilde{h}^{0,b}$ norms. \\

\noi Let $\mathcal{I}_1$ as defined in Lemma \ref{FT2}. 
We wish to use this operator instead of $I$ in Lemma \ref{FT} as the formula is simpler since it does not involve choices of $(j_n)_{n\in \mathcal{N}}$. This is possible thanks to Lemma \ref{FT2} which gives us the bound : $$\|IF\|_{h^{b}}\les\|\mathcal{I}_1F\|_{h^{b}} $$
for $b>1/2$. \\
\noi From now on, we will consider we have replaced the operator $I$ with the operator $\mathcal{I}_1$ i.e. we consider we have replaced the operator $\mathcal{P}$ by :
\begin{align*}
         v \in \Tilde{h}^{s,b} \mapsto \mathcal{I}_1B(J_\mathcal{T},v) \in \Tilde{h}^{s,b} 
    \end{align*}
    to compute the $\|.\|_{\Tilde{h}^{s,b}\xrightarrow[]{}\Tilde{h}^{s,b}}$ norm. \\

\noi Now by Proposition \ref{duhamel} below (see \cite{Ginibre} for a reference) applied with $b=1$, $b'=0$ and $T=1$  and Young's inequality, we have for $v \in \Tilde{h}^{0,b}$ :
\begin{align*}
    \|\mathcal{P}(v)\|_{\Tilde{h}^{0,1}} & \les \|B(J_\mathcal{T,}v)\|_{\Tilde{h}^{0,0}} \\
    & \les \|v\|_{l_k^2L_t^2}L^{-d/2}\frac{\al T_{max}}{L^{d/2}}\sum_{k_1\in \Z_L^d}\|(J_\mathcal{T})(t,k_1)\|_{L_t^\infty} \, .
\end{align*}
\noi So we get as $b> 1/2$ and using Proposition \ref{bound1} :
\begin{align*}
    \|\mathcal{P}(v)\|_{\Tilde{h}^{0,1}} &\les \|v\|_{l_k^2L_t^2}L^{-d/2}\frac{\al T_{max}}{L^{d/2}}\sum_{k_1 \in \Z_L^d}\| (J_\mathcal{T})_{k_1}\|_{h^b} \\
    &\les \|v\|_{\tilde{h}^{0,b}} \frac{\al T_{max}}{L^{d/2}}\sum_{k_1\in \Z_L^d}L^{\theta +C(b-1/2)}\rho^n \jb{k_1}^{-2s} \\
    & \les \|v\|_{\Tilde{h}^{0,b}} \frac{\al T_{max}}{L^{d/2}}L^dL^{\theta +C(b-1/2)}\rho^n \, .
\end{align*}
We just bounded $\|\mathcal{P}\|_{\Tilde{h}^{0,b}\rightarrow \Tilde{h}^{0,1}}$. Now we want to bound another operator norm of $\mathcal{P}$ and use interpolation. More precisely, \textbf{we want to show that} :
\begin{align} \label{boundsecond}
    \|\mathcal{P}\|_{\Tilde{h}^{0,b}\xrightarrow[]{}\Tilde{h}^{0,1-b}} \les L^\theta \rho^{n+1} \, .
\end{align}
First, assume that we have proved inequality \eqref{boundsecond}. \\
\noi We use interpolation, writing $b=(1-t)1 +t(1-b)$ with $t=1/b-1$. We then get  for $b$ close to $1/2$ using $\rho \les L^{-\delta}$ and $T_{max}\le L^{C(\gamma,d)}$ :
\begin{align*}
     \|\mathcal{P}\|_{\tilde{h}^{0,b}\xrightarrow[]{}\Tilde{h}^{0,b}} & \les L^\theta \rho^{(n+1)(1/b-1)}(\rho^n)^{2-1/b} L^{C(b-1/2)(2-1/b)}\left(\frac{\al T_{max}L^d}{L^{d/2}}\right)^{2-1/b} \\
     & \les L^{\theta +C(b-1/2)(2-1/b)} \rho^{n+1/b-1}\left(\frac{\rho L^d T_{max}^{1/2}}{L^{d/2}}\right)^{2-1/b} \\
     &\les L^\theta \rho^{n+1/2}
\end{align*}
which is the desired result. \\

 \noi We now return to the proof of bound \eqref{boundsecond}. \\
\noi Define $H(\tau,\s):=\jb{\tau}I_1(\tau,\s)$. We write using the definition of $\mathcal{P}$ in Proposition \ref{bound2} :
\begin{align}\label{defoperatorP}
    \Ft_{t}(\mathcal{P}v)(\tau)=\frac{-i\al T_{max}}{L^{d/2}}\jb{\tau}^{-1}\sum_{\substack{k_1,k_2 \in \Z_L^d \\ k_1+k_2=k}}\int_{\R^2}H(\tau,\s_1+\s_2-2T_{max}k_1\cdot k_2)\Ft_{t}(J_\mathcal{T})(\s_1,k_1)\Ft_{t}(v)(\s_2,k_2)d\s_1 d\s_2
\end{align}
where $H(\tau,\eta)$ has all its derivatives bounded by $\jb{\tau-\eta}^{-10}$ by Lemma \ref{FT}. \\
\noi Now recall that :
\begin{align}
    &\cdot \ \ \ \|f\|_{L_\tau^1l_k^2} \les \|\jb{\tau}^{b}f\|_{L_\tau^2l_k^2} \ \ \ \text{by Cauchy-Schwarz inequality as $b>1/2$} \label{ligne1} \\
    &\cdot \ \ \ \|\jb{\tau}^{1-b} \jb{\tau}^{-1}\Ft_{t}(f)\|_{l_k^2L_\tau^2} \les \|\Ft_{t}(f)\|_{L_\tau^\infty l_k^2} \ \ \ \text{as $b>1/2$.} \label{ligne2}
\end{align}
We bound using inequality \eqref{ligne2} :
\begin{align*}
   \| \left(\mathcal{P}v\right)\|_{\tilde{h}^{0,1-b}}& \les \frac{\al T_{max}}{L^{d/2}} L^{-d/2} \\
   & \times \Bigg\| \sum_{\substack{k_1,k_2 \in \Z_L^d \\ k_1+k_2=k}} \int_{\R^2}H(\tau,\s_1+\s_2-2T_{max}k_1\cdot k_2)\Ft_{t}(J_\mathcal{T})(\s_1,k_1)\Ft_{t}(v)(\s_2,k_2)d\s_1 d\s_2\Bigg\|_{L_\tau^\infty l_k^2} \\
   &\les \sup_{\tau, \s_2 \in \R} \|\chi(\tau,\s_2)  \|_{ l_k^2\xrightarrow[]{}l_k^2} L^{-d/2}\|\Ft_{t}(v)\|_{L_\tau^1 l_k^2} 
   \end{align*}
   which is smaller by inequality \eqref{ligne1} than
   \begin{align*}
     \sup_{\tau, \s_2 \in \R} \|\chi(\tau,\s_2)  \|_{ l_k^2\xrightarrow[]{}l_k^2} \|v\|_{\Tilde{h}^{0,b}}
\end{align*}
where we defined the family of operators $(\tau,\s_2)\in \R^2 \mapsto \chi(\tau,\s_2)\in \mathcal{L}(l_k^2,l_k^2)$ by
\begin{equation}\label{formulaofchi}
    \big(\chi(\tau,\s_2) v\big)_k:= \frac{\al T_{max}}{L^{d/2}}\sum_{\substack{k_1,k_2 \in \Z_L^d \\ k_1+k_2=k}}v(k_2) \int_{\R}H(\tau,\s_1+\s_2-2T_{max}k_1\cdot k_2)\Ft_{t}(J_\mathcal{T})(\s_1,k_1)d\s_1 \, .
\end{equation}
We want to write the operator $\chi$ as a tree expansion and apply the same counting arguments as in Section \ref{proofbound1}. In order to do this, we see the $\Ft_{t}(J_\mathcal{T})$ term in equality \eqref{formulaofchi} as a tree.  The leaves will correspond to the Gaussians of our initial data. We still need to represent the argument $v$ that the operator is evaluated at in \eqref{formulaofchi}. We will represent the empty space for the argument by a special leaf called "placeholder leaf". Now to sum up, $\chi$ is an operator that sticks a placeholder leaf $r'$ (corresponding to where the argument of $\chi$ will be put) and the tree $\mathcal{T}$ to the root $r$ of wave number $k$ for any decoration. It can be represented by :
\begin{figure}[H]
    \centering

\begin{tikzpicture}[->,>=stealth',level/.style={sibling distance = 5cm/#1,
  level distance = 1.5cm}] 
  \node[arn_n, label=90:{$k$}] {$r$} 
  child{ node [arn_r, label=90:{$k_1$}] {$\mathcal{T}$} } 
  child{node [arn_r,label=90:{$k_2$}, label=270:{placeholder for $v$}] {$r'$}}
  ;
\end{tikzpicture}
\caption{Tree representation of the operator $\chi$. The leaf $r'$ is a placeholder that will receive the argument of the operator. The tree $\mathcal{T}$ contains the rest of the information of the operator}
\label{fig2}

\end{figure}

\noi We want to write this operator as :
\begin{align*}
    &(\chi(\tau,\s_2) v)_k  = \sum_{k'\in \Z_L^d}\chi_{kk'}(\tau,\s_2)v_{k'} \,.
    \end{align*}
    By setting :
    $$q_r=q_{\text{root of $\mathcal{T}$}}-2k_1\cdot k_2\, ,$$
    we get that the kernel is equal to :
    \begin{align}
    & \chi_{kk'}(\tau,\s_2)= \left(\frac{\al T_{max}}{L^{d/2}} \right)^{n+1} \sum_{k_n \ \text{admissible}}\mathcal{K}_\mathcal{T}(\tau,\s_2,(k-k'\cdot k'), \{k_l, l\neq r'\})\frac{1}{\jb{T_{max}q_r--\zeta}^5} \\
    & \hspace{11 cm}\times \prod_{n\in \mathcal{N}-\{r\}} \frac{1}{\jb{T_{max}q_n}}\prod_{l\in \mathcal{L}-\{r'\}}g_{k_l} \label{kernelofchi}
\end{align}
for some function $\mathcal{K}_\mathcal{T}$, where $(k_n)_n$ admissible if $k_r=k$, $k_{r'}=k'$ and $|k_n|\le L^\theta$ for $n\neq r, r'$. \\
\noi By Lemma \ref{lemmaK} we have that :
\begin{align*}
     \Ft_{t}(J_{\mathcal{T}})(\tau,k)=\left(\frac{-i\al T_{max}}{L^{d/2}}\right)^n \sum_{k_n, \ n\in \mathcal{T}}K_{\mathcal{T}}(\tau,k_n:n\in \mathcal{T})\prod_{l\in \mathcal{L}}\sqrt{n_{in}(k_l)}g_{k_l}(\o) \, .
\end{align*}
This yields~:
\begin{multline*}
    \mathcal{K}_\mathcal{T} (\tau,\s_2,(k-k'\cdot k'), \{k_l, l\neq r'\})\\ = i^n \jb{T_{max}q_r-\zeta}^5\prod_{n\in \mathcal{N}-\{r\}} \jb{T_{max}q_n} \int_{\R}H(\tau,\tau_1+\s_2-2T_{max}k_1\cdot k_2) K_\mathcal{T}(\tau_1,k_n \ : \ n \in \mathcal{T})d \tau_1 \, .
\end{multline*}
First, we make reductions as in the proof of the first bound in Section \ref{proofbound1}. \\

\noi We can assume for the rest of the proof that $|\tau|,|\s_2| \le L^{\theta^{-1}}$ as otherwise, one can gain some power of $L$ thanks to inequalities \eqref{ligne1} and \eqref{ligne2} and close the estimate. \\
\noi Indeed, if we localize in $|\tau|,|\s_2| >L^{\theta^{-1}}$, we can write :
\begin{align*}
     \|\ind_{|\tau|\ge L^{\theta^{-1}}} (\mathcal{P}v\ind_{|\s_2|\ge L^{\theta^{-1}}})\|_{\Tilde{h}^{0,1-b}} &= L^{-d/2}\Bigg\|\ind_{|\tau|\ge L^{\theta^{-1}}}\jb{\tau}^{-b} \jb{\tau}\Ft_{t}\left(\mathcal{P}v\ind_{|\s_2|\ge L^{\theta^{-1}}}\right)(\tau,k)\bigg\|_{l_k^2L_\tau^2} \\
     &\le L^{-d/2} \Bigg\| \jb{\tau}\Ft_{t}\left(\mathcal{P}v\ind_{|\s_2|\ge L^{\theta^{-1}}}\right)(\tau,k)\Bigg\|_{L_\tau^\infty l_k^2}\Big\|\jb{\tau}^{-b}\ind_{|\tau|\ge L^{\theta^{-1}}}\Big\|_{L_\tau^2} \, .
     \end{align*}
     We integrate in $\tau$, inject the expression of the Fourier transform of the operator $\mathcal{P}$ \eqref{defoperatorP} and use the formula of operator $\chi$ \eqref{formulaofchi} to get :
     \begin{align*}
     \| \ind_{|\tau|\ge L^{\theta^{-1}}}&(\mathcal{P}v\ind_{|\s_2|\ge L^{\theta^{-1}}})\|_{\Tilde{h}^{0,1-b}} \les \frac{\al T_{max}}{L^{d/2}} L^{-\theta^{-1}(b-1/2)} \\
     &\times \Bigg\| \sum_{\substack{k_1,k_2 \in \Z_L^d \\ k_1+k_2=k}} \int_{\R^2}H(\tau,\s_1+\s_2-2T_{max}k_1\cdot k_2)\Ft_{t}(J_\mathcal{T})(\s_1,k_1)\Ft_{t}(v)(\s_2,k_2)\ind_{|\s_2|\ge L^{\theta^{-1}}}d\s_1 d\s_2\Bigg\|_{L_\tau^\infty l_k^2} \\
   &\les  \sup_{\tau,\s_2 \in \R}\|\chi (\tau,\s_2)  \|_{l_k^2\xrightarrow[]{}l_k^2} \|\Ft_{t}(v)(\s_2,k)\ind_{|\s_2|\ge L^{\theta^{-1}}}\|_{L_{\s_2}^1 l_k^2}L^{-\theta^{-1}(b-1/2)} \\
   & \les L^{-2\theta^{-1}(b-1/2)}  \sup_{\tau,\s_2 \in \R}\|\chi (\tau,\s_2)  \|_{l_k^2\xrightarrow[]{}l_k^2} \|v\|_{\tilde{h}^{0,b}} \, .
\end{align*}
We use rough bounds and obtain by Cauchy-Scwharz inequality as $s>d/2$ and $b>1/2$ :
\begin{align*}
    \| \ind_{|\tau|\ge L^{\theta^{-1}}}(\mathcal{P}v\ind_{|\s_2|\ge L^{\theta^{-1}}})\|_{\Tilde{h}^{0,1-b}} &\les L^{-2\theta^{-1}(b-1/2)}  \|v\|_{\tilde{h}^{0,b}} \frac{\al T_{max}}{L^{d/2}} \sup_{\tau,\s_2 \in \R}\|\int_{\R}|\Ft_{t}(J_\mathcal{T})(\s_1,k_1)|d\s_1  \|_{l_k^1} \\
    & \les  L^{-2\theta^{-1}(b-1/2)}  \|v\|_{\tilde{h}^{0,b}} \frac{\al T_{max}}{L^{d/2}}L^{d/2} \|J_\mathcal{T} \|_{\Tilde{h}^{s,b}} \,.
\end{align*}
We finally use Proposition \ref{bound1} and the bound $T_{max}\le L^{C(\gamma,d)}$ to get :
\begin{align*}
     \| \ind_{|\tau|\ge L^{\theta^{-1}}}(\mathcal{P}v\ind_{|\s_2|\ge L^{\theta^{-1}}})\|_{\Tilde{h}^{0,1-b}} &\les  L^{-2\theta^{-1}(b-1/2)}  \|v\|_{\tilde{h}^{0,b}}  L^{C(\gamma,d)} L^{\theta+C(b-1/2)}L^{-\delta} \,.
\end{align*}
Similar bounds hold true if only $|\tau|$ or $|\s_2|$ is greater than $L^{\theta^{-1}}$. These bounds are better than the ones we will obtain when $|\tau|,|\s_2|\le L^{\theta^{-1}}$ as $\theta >0$ is arbitrarily small. Thus, we only need to bound $ \sup_{\tau,\s_2 \in \R}\|\ind_{|\tau|\le L^{\theta^{-1}}}\ind_{|\s_2|\le L^{\theta^{-1}}}\chi (\tau,\s_2)  \|_{l_k^2\xrightarrow[]{}l_k^2}$ to close the argument. \\

\noi To go from a point-wise L-certain estimate in $\tau,\s_2$ to a uniform L-certain estimate, we use the same technique as in Section \ref{proofbound1} and cut $[-L^{\theta^{-1}},L^{\theta^{-1}}]$ into smaller intervals, using the bounds on the differential of $H$, $\partial_1 H$ and $\partial_{2}H$ implied by the bounds on the differential of $I_1$ in Lemma \ref{FT}. \\
\noi Therefore, one can fix $\tau$ and $\s_2$. We then set $$\zeta:=\tau-\s_2 \, .$$ We will make the change of variable $$\chi(\tau,s_2)=\chi(\tau,\zeta) \, .$$ \\
Lemma \ref{lemmaK} also implies that $|\mathcal{K}_\mathcal{T}|\le L^\theta$ and $|\partial_\tau \mathcal{K}_\mathcal{T}|\le L^\theta T_{max}$. Indeed, one can write :
\begin{align*}
     &\left|\mathcal{K}_\mathcal{T} (\tau,\s_2,(k-k'\cdot k'), \{k_l, l\neq r'\})\right| \\
     &= \left| \jb{T_{max}q_r-\zeta}^5\prod_{n\in \mathcal{N}-\{r\}} \jb{T_{max}q_n} \int_{\R}H(\tau,\tau_1+\s_2-2T_{max}k_1\cdot k_2) K_\mathcal{T}(\tau_1,k_n \ : \ n \in \mathcal{T})d \tau_1 \right| \\
     &\les \jb{T_{max}q_r-\zeta}^5 \int_\R \jb{\zeta-\tau_1+2T_{max}k_1\cdot k_2}^{-10}\jb{\tau_1-T_{max}q_{\text{root of $\mathcal{T}$}}}^{-10}d\tau_1  \, .
     \end{align*}
     We do a change of variables and estimate the integral :
     \begin{align*}
     &\left|\mathcal{K}_\mathcal{T} (\tau,\s_2,(k-k'\cdot k'), \{k_l, l\neq r'\})\right| \\
     &\hspace{0.5cm} \les \jb{T_{max}q_r-\zeta}^5 \int_\R \jb{\zeta-\tau_1-T_{max}q_{\text{root of $\mathcal{T}$}}+2T_{max}k_1\cdot k_2}^{-10}\jb{\tau_1}^{-10}d\tau_1  \\
     & \hspace{0.5cm}\les \jb{T_{max}q_r-\zeta}^5  \jb{\zeta-T_{max}q_{\text{root of $\mathcal{T}$}}+2T_{max}k_1\cdot k_2}^{-5} \\
     & \hspace{0.5cm}= \jb{T_{max}q_r-\zeta}^5  \jb{\zeta-T_{max}q_r}^{-5}\\
     &\hspace{0.5cm}\les L^\theta \, .
     \end{align*}
  
\noi For $\partial_\tau \mathcal{K}_\mathcal{T}$, we proceed in a similar manner.  \\

\noi We make further reductions on \eqref{formulaofchi} (similarly to Section \ref{proofbound1}). \\

\noi We fix the integer part of $T_{max}q_n$ for $n\in \mathcal{N}-\{r\}$ and of $T_{max}q_r$ and assume them to be $\les L^{\theta^{-1}}$ by decay of $\jb{T_{max}q_r-\zeta}^{-5}$ and because of the bound $|\zeta|\les L^{\theta^{-1}}$. \\ We justify we can do this up to a small enough loss. We denote by $[x]$ the integer part of $x$. If $|[T_{max}q_r-\zeta] |> L^{\theta^{-1}}$, then we use Lemma \ref{hypercontractivity} with $A=L^\theta$, we denote by $\mathcal{T}'$ a copy of the tree $\mathcal{T}$ where the wave numbers of the leaves have been permuted by some $\s \in S_{n+1}$ ; we will use the following notations for the tree $\mathcal{T}'$ : $ \mathcal{N}'=:\{\s(n), \ n\in \mathcal{N}\}$, $ \mathcal{L}'=:\{\s(l), \ l\in \mathcal{L}\}$, $\s(r)$ its root, $\s(r')$ its placeholder leaf. \\
\noi We get then that L-certainly for every $\zeta$ and $\tau$ :
\begin{multline*}
    |\chi_{kk'}(\tau,\zeta)|^2 \les  \sum_{\s \in \mathcal{S}_{n+1}}\left(\frac{\al T_{max}}{L^{d/2}} \right)^{2(n+1)} \sum_{\substack{\nu \in \Z \\ |\nu| \ge L^{\theta^{-1}}}}\sum_{k_n, k_{\s(n)} \ \text{admissible}}\ind([T_{max}q_r-\zeta]=\nu)\\ \times \left|\mathcal{K}(\tau,\zeta,(k-k'\cdot k'), \{k_l, l\neq r'\})\right| 
       \left|\mathcal{K}(\tau,\zeta,(k-k'\cdot k'), \{k_{\s(l)}, l\neq r'\})\right|\frac{1}{\jb{T_{max}q_r-\zeta}^5} \\ \times \frac{1}{\jb{T_{max}q_{\s(r)}-\zeta}^5} \prod_{n\in \mathcal{N}-\{r\}} \frac{1}{\jb{T_{max}q_n}} \frac{1}{\jb{T_{max}q_{\s(n)}}} \, .
       \end{multline*}
       We use the indicator function $[T_{max}q_r-\zeta]=\nu$ and get this is smaller than :
       \begin{multline*}
      \sum_{\s \in \mathcal{S}_{n+1}}\left(\frac{\al T_{max}}{L^{d/2}} \right)^{2(n+1)} \sum_{\substack{\nu \in \Z \\ |\nu| \ge L^{\theta^{-1}}}}\frac{1}{\jb{\nu}^5}\sum_{k_n, k_{\s(n)} \ \text{admissible}}\ind([T_{max}q_r-\zeta]=\nu) \\
     \times \left|\mathcal{K}(\tau, \zeta,(k-k'\cdot k'), \{k_l, l\neq r'\})\right| \left|\mathcal{K}(\tau, \zeta,(k-k'\cdot k'), \{k_{\s(l)}, l\neq r'\})\right| \frac{1}{\jb{T_{max}q_{\s(r)}-\zeta}^5} \\
     \times \prod_{n\in \mathcal{N}-\{r\}} \frac{1}{\jb{T_{max}q_n}} \frac{1}{\jb{T_{max}q_{\s(n)}}} \, .
     \end{multline*}
     We sum on $\nu$ and bound the previous sum by :
     \begin{multline*}
     \sum_{\s \in \mathcal{S}_{n+1}}\left(\frac{\al T_{max}}{L^{d/2}} \right)^{2(n+1)} \log(L)\sup_{\nu \in \Z}\sum_{k_n, k_{\s(n)} \ \text{admissible}}\ind([T_{max}q_r-\zeta]=\nu) \\
     \times \left|\mathcal{K}(\tau,\zeta,(k-k'\cdot k'), \{k_l, l\neq r'\})\right| \left|\mathcal{K}(\tau,\zeta,(k-k'\cdot k'), \{k_{\s(l)}, l\neq r'\})\right|\frac{1}{\jb{T_{max}q_{\s(r)}-\zeta}^5} \\
     \times \prod_{n\in \mathcal{N}-\{r\}} \frac{1}{\jb{T_{max}q_n}}\prod_{n\in \mathcal{N}-\{r\}} \frac{1}{\jb{T_{max}q_{\s(n)}}} \, .
\end{multline*}
We have the same result if we impose $|[T_{max}q_n]|\ge L^{\theta^{-1}}$ for some $n \in \mathcal{N}-\{r\}$. Thus, the part of the operator where such conditions are imposed can be bound in the same way as the rest of the operator (see below) and using rough bounds, up to a loss of $\log(L)^c$. \\

\noi If we try to bound $ \|\chi (\tau,\zeta)  \|_{l_k^2\xrightarrow[]{}l_k^2}$ directly, the bound will be too large for us to close the argument. So we use a $TT^*$ argument and try to bound $\|(\chi\chi^*)^D  \|_{ l_k^2\xrightarrow[]{}l_k^2}$ to some high power $D\in \N$ where we omit the dependence on $\tau$ and $\zeta$. \\

\begin{remark}
    One could also try to bound the spectral radius $\rho(\chi)$ but this would not work with the current structure of the proof as using this information would require the cutoff in the Feynmann tree expansions $N$ to depend on $L$.
\end{remark}

\noi Now we compute the adjoint of $\chi$. Let $w\in l_k^2$.
\begin{align*}
    \left( \chi v,w\right)&=\sum_{k\in \Z_L^d}\frac{\al T_{max}}{L^{d/2}}\sum_{\substack{k_1,k_2 \in \Z_L^d\\k_1+k_2=k}}v_{k_2}(\s_2) \int_{\R}H(\tau,\s_1+\s_2-2T_{max}k_1\cdot k_2)\Ft_{t}(J_\mathcal{T})(\s_1,k_1)d\s_1 \cj{w_k} \\
    &=\sum_{k_2\in \Z_L^d}v_{k_2}(\s_2)\cj{\frac{\al T_{max}}{L^{d/2}}\sum_{\substack{k,k_1 \in \Z_L^d \\k_1+k_2=k}} \int_{\R}\cj{H(\tau,\s_1+\s_2-2T_{max}k_1\cdot k_2)\Ft_{t}(J_\mathcal{T})(\s_1,k_1)}d\s_1 w_k} \, .
\end{align*}
So changing the indexation in the sum :
\begin{align}
    (\chi^*w)(k') &= \frac{\al T_{max}}{L^{d/2}}\sum_{\substack{k,k_1 \in \Z_L^d \\k'=k-k_1}} \int_{\R}\cj{H(\tau,\s_1+\s_2-2T_{max}k_1\cdot (k-k_1))\Ft_{t}(J_\mathcal{T})(\s_1,k_1)}d\s_1 w_k \\ \label{chistar}
    & =\frac{\al T_{max}}{L^{d/2}}\sum_{\substack{k,k_1 \in \Z_L^d \\k'=k-k_1}} \int_{\R}\cj{H(\tau,\s_1+\s_2-2T_{max}k'\cdot (k-k'))\Ft_{t}(J_\mathcal{T})(\s_1,k_1)}d\s_1 w_k \, . \\ \notag 
    \end{align}
We want to compute $(\chi \chi^*)^{D}$. 

\noi We recall that $(q_n)_{n\in \mathcal 
N}$ and $(\O_n)_{n\in \mathcal{N}}$ are linear combinations one of the other. Thus we can assume we have fixed $ (\s_n)_{n\in \mathcal{N}} \in \R^\mathcal{N}$ and look at the restriction on the sum when $|\O_n-\sigma_n|\le T_{max}^{-1}$ for all $n\in \mathcal{N}$ and $\s_n~=~O(L^{\theta^{-1}})$ (this last assumption, which was not present in Section \ref{proofbound1}, is needed to apply Lemma \ref{test} below). We henceforth assume the operator $\chi$  takes into account these restrictions.\\
\noi We will see that the operator $(\chi\chi^*)^D$ corresponds to a concatenation of trees. However, because of the complex conjugate in the formula of $\chi^*$ \eqref{chistar}, some leaves corresponding to complex Gaussians will be conjugated. We will use the notation $z^+:=z$ and $z^-:=\cj{z}$ for $z\in \C$. \\
\noi To illustrate this, we choose to represent the tree corresponding to $(\chi \chi^*)^2$  as an example : 
\begin{figure}[H]
\centering
    \begin{tikzpicture} [->,>=stealth',level/.style={sibling distance = 5cm/#1,
  level distance = 1.5cm}] 
  \node[arn_r, label=90:{$k=k_{r_0}$}, label=0:{$\O=(k_{r_0}-k_{r_1})\cdot k_{r_1}$}] {$r$} 
  child{ node [arn_r, label=90:{$k_1$}] {$\mathcal{T}$} } 
  child{node [arn_b,label=0:{$\Omega=k_{r_1}\cdot(k_{r_2}-k_{r_1})$},label=90:{$k_{r_1}$}] {$\chi*$}
    child{node [arn_b,label=90:{$k_2$}] {$\mathcal{T}$}}
    child{node [arn_r, label=0:{$\Omega=(k_{r_2}-k_{r_3})\cdot k_{r_3}$},label=90:{$k_{r_2}$}] {$\chi$}
        child{node [arn_r, label=90:{$k_4$}] {$\mathcal{T}$}}
        child{node [arn_b, label=90:{$k_{r_3}$}, label=0:{$\O=k_{r_3}\cdot(k_{r_4}-k_{r_3})$}] {$\chi^*$}
            child{node [arn_b] {$\mathcal{T}$}}
            child{node [arn_r, label=45:{$k'=k_{r_4}$}] {$r'$}}
        } 
    }
  }
  ;
\end{tikzpicture}
\caption{Tree representation of $(\chi\chi^*)^2$. Colours correspond to different kind of nodes : blue for $\chi^*$ and red for $\chi$, each having a different resonance relation represented by $\Omega$. The trees in blue have conjugated leaves.}
\label{tree}
\end{figure}
\noi In the end, the operator depends on :
\begin{align*}
    K_{set} :=\left\{ (k_{r_{2j}}-k_{r_{2j+1}})\cdot k_{r_{2j+1}}, \ k_{r_{2j+1}}\cdot (k_{r_{2j+2}}-k_{r_{2j+1}}) \ \ for \ 0 \le j \le D-1 \right\}
\end{align*}
and so we write :
\begin{align*}
   \Big( (\chi\chi^*)^D(\tau,\zeta)\Big)_{kk'}=\sum_{k_n \ \text{admissible}, \ n \in \mathcal{T}^D}\mathcal{K}^{(D)}(\tau,\zeta,K_{set},\ \{k_l, \ l\neq r'\})\left(\frac{\al T_{max}}{L^{d/2}}\right)^{2D(n+1)}\prod_{\substack{l\in \mathcal{L}^D\\ l\neq r'}}g_{k_l}^{i_l}
\end{align*}
where $\mathcal{T}^{D}$ is the tree formed on the model of Figure \ref{tree} with $D$ concatenations and leaves in the blue trees conjugated (i.e. $i_l=-$ iff $l\in \text{blue tree}$), $\mathcal{N}^D$ its set of branching nodes, $\mathcal{L}^D$ its set of leaves and with decoration verifying : $|k_l|\le L^\theta$ for $l\neq r'$, $|\O_{n}-\s_{n}|\le T_{max}^{-1}$ and $\s_n = O(L^{\theta^{-1}})$ ; where $\mathcal{K}^{(D)}$ is some function with $|\mathcal{K}^{(D)}|\le L^\theta$, $|\partial\mathcal{K}^{(D)}|\le L^\theta T_{max}$ (these properties are a direct consequence of the bounds on $\mathcal{K}_\mathcal{T}$) and where the restrictions on $\chi$ still apply ; where $k_r=k$ and $k_{r'}=k'$. \\

\noi We use the fact that $k'+\sum_{l\in \mathcal{L}^D}k_l=k$ in the tree $\mathcal{T}^D$. First, let $\mathcal{O}$ be an operator with the same support as $(\chi\chi^*)^D$. By Schur test estimate :
\begin{align*}
    \|\mathcal{O}\|_{l^2\xrightarrow[]{}l^2}^2 &\les \sup_{k'}(\sum_k |\mathcal{O}_{kk'}|)\sup_{k}(\sum_{k'} |\mathcal{O}_{kk'}|)  \, .
    \end{align*}
     And for $k$ (respectively $k'$) fixed there are at most $L^{d+\theta}$ values for $k'$ (respectively $k$). This yields :
    \begin{align*}
    \|\mathcal{O}\|_{l^2\xrightarrow[]{}l^2}^2& \les L^{d+\theta} \sup_{k,k'}(|\mathcal{O}_{kk'}|)L^{d+\theta} \sup_{k,k'}(|\mathcal{O}_{kk'}|) \, .
\end{align*} 

\noi Hence :
\begin{align*}
    \|\mathcal{O}\|_{l^2\xrightarrow[]{}l^2} \les L^{d+\theta}\|\mathcal{O}\|_{l^\infty} \, .
\end{align*}
So we can bound :
\begin{align}
    \|(\chi\chi^*)^D\|_{l^2\xrightarrow{}l^2} \les L^{d+\theta}\|(\chi\chi^*)^D\|_{l^\infty} \label{Schur} \, .
\end{align}
 Now we only to prove bounds $L$-certainly and uniformly in $k$ and $k'$. Let us fix $k$ and $k'$. \\
\noi In order to bound $ \left|(\chi\chi^*)_{kk'}^D\right|^2 $, we write a conjugated copy of $\mathcal{T}^D$, which we denote by $\mathcal{T}^{D-}$. We will use the notations for the tree $\mathcal{T}^{D-}$ : $\mathcal{N}^{D-}$, $\mathcal{L}^{D-}$, $r^-$ root, $r'^-$ placeholder. This tree will be used to represent the conjugate term when we compute the $L^2(\O)$ norm. This copy has all the same constraints for its decoration, as it copies the structure of $\mathcal{T}^D$, its only difference being the conjugation i.e. $i_{l^-}=-i_l$ for all $l\in \mathcal{L}^D$. \\
\noi Just as in Section \ref{proofbound1}, we will need to make pairs between conjugated and non-conjugated leaves to compute the norm. We denote by $\s \in Bij\big((\mathcal{L}^D-\{r'\} )\cup(\mathcal{L}^{D-}-\{r'^-\})\big)$ the bijective involution which to each leaf in $(\mathcal{L}^D-\{r'\} )\cup(\mathcal{L}^{D-}-\{r'^-\})$ gives its paired leaf meaning $ i_{\s(l)}=-i_l$ and $k_l=k_{\s(l)}$. We will call this condition the $\s$-pairing condition. We denote by $S^D$ the set of bijections verifying these properties.    \\
\noi We use a variant of Lemma \ref{hypercontractivity} with $A=L^\theta$ and notice there are at most $(2(n+1)D)!$ possible involutions in $S^D$, and get L-certainly :
\begin{multline*}
    \left|(\chi\chi^*)_{kk'}^D\right|^2 \les L^\theta  \left( \frac{\al T_{max}}{L^{d/2}}\right)^{4D(n+1)}  \sum_{\s \in S^D} \sum_{k_n,  \ n\in \mathcal{T}^D\cup\mathcal{T}^{D-}}\ind (k_n \ \text{admissible}) \ind(\s-\text{pairing condition}) \, .
\end{multline*}
\noi First we notice that the counting problems associated to the roots of the $\chi^*$ operators in the tree representation of Figure \ref{tree} (labeled by $k_{r_{2j-1}}$ for $1\le j \le D$) are different as their associated resonance relation is different. In particular, if for instance $k_{r_1}=0$ in Figure \ref{tree}, one cannot hope to gain anything from the condition $|\O-\s|\le T_{max}^{-1}$ for some $\s \in \R$. Not only that, but $\O$ is not quadratic in $k_{r_2}$ which means we cannot use the divisor method previously used in Section \ref{proofbound1}. This comes from the fact that these counting problems are in a way associated to the dispersion relation of equation $i\dt u -\Dl u=|u|^2$. \\

\noi \textbf{We first treat the more difficult case of Theorem \ref{theorem2} where we use divisor bounds}. For the simpler case of Theorem \ref{theorem1}, see \eqref{skip2} below. \\

\noi We will use the following counting lemma :
\begin{lemma} \label{countinglemmasimple}
    Let $k\in \Z^d-\{0\}$, $\s\in\R$ then :
    \begin{equation*}
        \#\{N\in \Z^d, \ |N|\le L^{1+\theta}, \ |k\cdot(k-N)-\s|\le L^{\theta}L^2T_{max}^{-1}\} \le \begin{cases}
            L^\theta L^{d+1}T_{max}^{-1}\|k\|_\infty^{-1} \ \text{if} \ \|k\|_\infty \ge LT_{max}^{-1}\, . \\
            L^\theta L^d \ \text{otherwise}  \, .
        \end{cases}
    \end{equation*}
\end{lemma}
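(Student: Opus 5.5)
The plan is to exploit that the constraint is affine in $N$: since $k\cdot(k-N)=|k|^2-k\cdot N$, the condition $|k\cdot(k-N)-\s|\le L^\theta L^2T_{max}^{-1}$ says that $k\cdot N$ lies in an interval $I$ of length $2L^{\theta}L^2T_{max}^{-1}$ centred at $|k|^2-\s$. Without loss of generality (permuting coordinates and changing signs, which preserves both the ball $|N|\le L^{1+\theta}$ and the form of the constraint) I assume $\|k\|_\infty=|k^{(1)}|$, where $k^{(1)}$ denotes the first coordinate of $k$; since $k\neq0$, $|k^{(1)}|\ge1$.

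The count proceeds by fixing the last $d-1$ coordinates $N^{(2)},\dots,N^{(d)}$ of $N$, which gives at most $(2L^{1+\theta}+1)^{d-1}\les L^{(d-1)(1+\theta)}$ choices. For each such choice, the constraint reads $k^{(1)}N^{(1)}\in I-\sum_{i\ge2}k^{(i)}N^{(i)}$, a translate of $I$. Hence $N^{(1)}$ ranges over the integers of an interval of length $|I|/|k^{(1)}|=2L^{\theta}L^2T_{max}^{-1}\|k\|_\infty^{-1}$. The number of such integers is at most $1+2L^{\theta}L^2T_{max}^{-1}\|k\|_\infty^{-1}$. This is independent of $\s$, which is why no divisor-type argument is needed here.

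Multiplying the two counts gives
\[
\#\{\dots\}\les L^{(d-1)(1+\theta)}\Big(1+L^{\theta}L^{2}T_{max}^{-1}\|k\|_\infty^{-1}\Big).
\]
I then split into the two regimes of the statement.
\begin{itemize}
\item If $\|k\|_\infty\ge LT_{max}^{-1}$, there are two subcases. When the fraction term dominates, the bound is $\les L^{\theta'}L^{d+1}T_{max}^{-1}\|k\|_\infty^{-1}$. When the $1$ dominates, the count is $\les L^{(d-1)(1+\theta)}$. In both subcases I compare against the claimed $L^\theta L^{d+1}T_{max}^{-1}\|k\|_\infty^{-1}$, using the fact that $\|k\|_\infty\les L^{1+\theta}$ in the admissible range (the lemma is applied to wave numbers rescaled to $\Z^d$ with $|k|\le L^{1+\theta}$). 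The remaining $L^{C\theta}$ losses are absorbed by renaming $\theta$, as is done throughout the paper.
\item Otherwise, I use only the trivial bound $\#\{|N|\le L^{1+\theta}\}\les L^{d(1+\theta)}$, which gives $L^\theta L^d$ after renaming $\theta$.
\end{itemize}

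The main obstacle is the "$+1$" term in the first regime. To bound $L^{d-1}$ by $L^{d+1}T_{max}^{-1}\|k\|_\infty^{-1}$ requires $\|k\|_\infty T_{max}\les L^{2}$ up to $L^{C\theta}$. This holds since $\|k\|_\infty\le L^{1+\theta}$ and the regimes considered have $T_{max}\le L^{1+C\theta}$ essentially, and in the worst case it follows from the $T_{max}\le L^2$ condition recalled in Remark \ref{CRcondition}. If this comparison fails in some range, I would state the bound as the maximum of the two expressions, since that is all that the subsequent counting needs.
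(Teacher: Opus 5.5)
Your argument is the paper's: fix the $d-1$ coordinates other than the one realizing $\|k\|_\infty$, and read the affine condition as confining the remaining coordinate to an interval of length $\les L^{\theta}L^2T_{max}^{-1}\|k\|_\infty^{-1}$. The paper simply asserts that this coordinate takes $\les L^2T_{max}^{-1}|k_d|^{-1}$ values, silently dropping the $+1$ that you correctly flag.

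\textbf{The gap is in how you dispose of that $+1$.} First, the lemma places no bound on $k$. Nor does its application supply one: there $k$ is the label of a $\chi^*$ root, lying within $O(L^{1+\theta})$ of the external frequency, which the paper says is not bounded a priori. Second, even granting $\|k\|_\infty\le L^{1+\theta}$, absorbing $L^{(d-1)(1+\theta)}$ requires $\|k\|_\infty T_{max}\les L^{2+C\theta}$, hence $T_{max}\les L^{1+C\theta}$. But in Theorem~\ref{theorem2} $T_{max}$ goes up to $L^{d/(2-1/d)}$, as recalled in Section~\ref{proofbound2}. The bound $T_{max}\le L^2$ of Remark~\ref{CRcondition} only gives $\|k\|_\infty T_{max}\le L^{3+\theta}$.

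No argument can repair this step, because the inequality is false there. For $d\ge2$, take $k=(K,0,\dots,0)$ with $K$ an integer in $[L,L^{1+\theta}]$, and $\s=K^2$. Then $k\cdot(k-N)-\s=-KN_1$, so every $N$ with $N_1=0$ and $|N|\le L^{1+\theta}$ is counted: that is $\ges L^{(d-1)(1+\theta)}$ points. We are in the first case, since $K\ge L\ge LT_{max}^{-1}$, and the claimed bound is at most $L^{\theta}L^{d}T_{max}^{-1}$. This is far smaller once $T_{max}\ge L^{1+\eps}$ with $\theta\ll\eps$, and renaming $\theta$ does not help.

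What your computation actually proves is $\#\les L^{C\theta}\big(L^{d-1}+L^{d+1}T_{max}^{-1}\|k\|_\infty^{-1}\big)$ in the first regime. Equivalently, it proves the stated bound under the extra hypothesis $\|k\|_\infty\le L^{2}T_{max}^{-1}$. Your fallback of stating the maximum is exactly this, but it is a different statement, and your claim that it is all the later counting needs is unverified. It is not harmless. Insert it into the paper's sum over the $\chi^*$-root labels with $\|k_{r_{2j-1}}\|_\infty=\kappa\le L^{\theta/d}LT_{max}^{-1/d}$. The extra $L^{d-1}$ term then contributes about $L^{2d-1}T_{max}^{-1}$. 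This exceeds the $L^{2d}T_{max}^{-2+1/d}$ obtained there as soon as $T_{max}>L^{d/(d-1)}$, a range reached in Theorem~\ref{theorem2} for $d\ge3$ and in Theorem~\ref{wavetheorem}. So one of two fixes is needed: either the lemma carries the $L^{d-1}$ term and the numerology of Section~\ref{proofbound2} is redone, or the hypothesis $\|k\|_\infty T_{max}\les L^{2}$ is added and checked wherever the lemma is used.
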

\begin{proof}
    If $ \|k\|_\infty \le LT_{max}^{-1} $, we do not use the condition on the dispersion relation and the proof is straightforward. \\
    \noi Otherwise, one can assume $k_d=\|k\|_{\infty} \neq 0$ up to permuting the coordinates as $k\neq 0$.\\
    We fix the first $d-1$ coordinates and then use the condition $$|k_d(k_d-N_d)-\s'|\le L^2T_{max}^{-1}$$ to get a bound of size $$\les L^2T_{max}^{-1}|k_d|^{-1}$$ on the last coordinate.
\end{proof}
\noi \textbf{In the case of Theorem \ref{theorem1}, we only use the trivial counting bound $L^{d+\theta}$}. \\

\noi We also need to use a variant of the counting lemma previously used in Section \ref{proofbound1} as one difference is that $k$ and $k'$ are not bounded a priori. This prevents us from using the same divisor bound estimate, as the estimate obtained would not be uniform in $k$ and $k'$. Instead, we use the following lemma, taken as Lemma 4.4 from \cite{countinglemmapaper} :
\begin{lemma} \label{countinglemmahard}
Let $R=\Z$ or $\Z[i]$, $c_1,c_2 \in \C$, $r\in R-\{0\}$, $B_1,B_2 \ge 0$. Then we have for $\nu>0$~~:
\begin{equation*}
    \# \left\{ N_1,N_2 \in R , \ r=N_1N_2, \ |N_1-c_1|\le B_1, |N_2-c_2|\le B_2\right\} \le C(\nu)B_1^\nu B_2^\nu \, .
\end{equation*}
\end{lemma}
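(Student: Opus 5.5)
I would prove this by splitting according to the size of $|r|$ relative to $B_1B_2$. I treat $R=\Z$ and $R=\Z[i]$ simultaneously, using only that $R$ is a Euclidean domain (hence a UFD) with finitely many units, and that the classical divisor bound $\#\{N\in R : N \mid r\}\les_\nu |r|^{\nu}$ holds in both rings. I will also assume $B_1,B_2\ge 1$. If $B_i<1$, the disc $|N_i-c_i|\le B_i$ contains $O(1)$ lattice points, and $N_1$ determines $N_2=r/N_1$, so the count is $O(1)$; this is the only sensible reading of the statement in that regime, and it is how the lemma is applied, since there the $B_i$ are powers of $L$.

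\emph{Small $r$.} Suppose first that $|r|\le (B_1B_2)^{K}$ for a fixed large $K$, say $K=10$. Every solution has $N_1\mid r$, and $N_1$ determines $N_2$. The divisor bound then gives at most $C(\nu)|r|^{\nu/K}\le C(\nu)(B_1B_2)^{\nu}$ solutions, after renaming $\nu$. This step is routine.

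\emph{Large $r$.} Now suppose $|r|>(B_1B_2)^K$. Here I want to show that the number of solutions is $O(1)$, or at worst bounded by a divisor count of a quantity polynomial in $B_1B_2$. Fix a reference solution $(N_1^0,N_2^0)$ and let $(N_1,N_2)$ be any other solution. Set $g=\gcd(N_1^0,N_1)$ and write $N_1^0=ga$, $N_1=ga'$ with $a,a'$ coprime. From $N_1^0N_2^0=N_1N_2$ we get $aN_2^0=a'N_2$, hence $N_2=at$ and $N_2^0=a't$ for some $t\in R$. Writing $\delta:=a'-a\neq 0$, this gives
\begin{equation*}
N_1-N_1^0=g\delta,\qquad N_2-N_2^0=-t\delta .
\end{equation*}
The left-hand sides are bounded by $2B_1$ and $2B_2$ respectively. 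Therefore $|g|\le 2B_1$ and $|t|\le 2B_2$, and so $|g t|\le 4B_1B_2$. On the other hand,
\begin{equation*}
|r|=|N_1^0N_2^0|=|g|\,|t|\,|a|\,|a'|,
\end{equation*}
which forces $|a|\,|a'|\ge |r|/(4B_1B_2)$ to be huge, while $|a-a'|=|\delta|\le 2B_1/|g|$ stays small. The pair $(g,t)$ then determines the solution: $g$ is a divisor of $N_1^0$ of norm $\le 2B_1$, $t$ is a divisor of $N_2^0$ of norm $\le 2B_2$, and from $(g,t)$ one recovers $a=N_1^0/g$, then $a'=N_2^0/t$, and hence $N_1=ga'$.

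\emph{Main obstacle.} What remains is to count the admissible pairs $(g,t)$ with a bound polynomial in $B_1B_2$ raised to the power $\nu$, and not in $|r|$. The divisor bound applied to $N_1^0$ would give $|N_1^0|^{\nu}$, which is useless when $|c_1|$ is huge. I would first try to exploit the hyperbola geometry. In the $\Z$ case, the curve $N_1N_2=r$ restricted to the box is a short arc with very small curvature when $|r|\gg (B_1B_2)^K$. A Jarník-type argument shows that three lattice points on it would have to be collinear, and a line meets the hyperbola in at most two points, so the count is $\le 2$. For $\Z[i]$ I would instead use the parametrization above: the relation $a'=a+\delta$ with $|\delta|$ small and $|a|$ huge, together with $N_2^0=a't$ and $N_2^0=N_2+t\delta$, should pin $\delta$ down to a divisor of a quantity of size $\les (B_1B_2)^{O(1)}$. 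That would give the bound $C(\nu)(B_1B_2)^{\nu}$ via the divisor bound once more. Making this last reduction rigorous is the step I expect to be hardest; failing that, I would simply invoke Lemma 4.4 of \cite{countinglemmapaper}, as the paper does.
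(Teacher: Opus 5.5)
Your reduction to $B_1,B_2\ge 1$ is correct; for $B_1=0$ the stated bound is literally false. Your small-$|r|$ case via the divisor bound in $R$ is also correct. However, the large-$|r|$ case is not proved: for $R=\Z[i]$ you leave it open, and for $R=\Z$ the Jarník step is only sketched. The gcd parametrization is correct but does not reduce anything. Counting $g$ among the divisors of $N_1^0$ of modulus $\le 2B_1$ cannot give $B_1^{\nu}$ uniformly in $|r|$, since a highly composite $N_1^0$ has at least $cB_1$ such divisors. The proposed pinning of $\delta$ has no candidate quantity to divide: the bounded numbers that $\delta$ visibly divides, $N_1-N_1^0$ and $N_2-N_2^0$, depend on the unknown solution, so fixing them costs as much as fixing the solution. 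Citing Lemma 4.4 of \cite{countinglemmapaper} is exactly what the paper does, but then what precedes it is not a proof.

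The missing idea is that your Jarník intuition transfers verbatim to $\Z[i]$. Read ``collinear'' as ``on a complex line'', and replace the triangle area by a determinant with entries in $R$. Take three distinct solutions $(a_j,b_j)$; their first coordinates are distinct and nonzero since $b_j=r/a_j$. Set $D:=(a_2-a_1)(b_3-b_1)-(a_3-a_1)(b_2-b_1)\in R$. Substituting $b_j=r/a_j$ gives
\begin{equation*}
D=\frac{r\,(a_2-a_1)(a_3-a_1)(a_3-a_2)}{a_1a_2a_3}\neq 0, \qquad |D|=\frac{|b_3-b_1|\,|a_2-a_1|\,|a_3-a_2|}{|a_2|}\le\frac{8B_1^2B_2}{|a_2|},
\end{equation*}
while $|D|\ge 1$ because it is a nonzero element of $R$. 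Now let $|r|>(16B_1B_2)^4$.
\begin{itemize}
\item If some solution has $|a|\ge |r|^{1/2}$, then every solution has $|a|\ge |r|^{1/2}-2B_1\ge |r|^{1/2}/2>8B_1^2B_2$. Hence $|D|<1$ for any three solutions, a contradiction.
\item Otherwise every solution has $|b|>|r|^{1/2}$. The symmetric identity $|D|=|a_3-a_1|\,|b_2-b_1|\,|b_3-b_2|/|b_2|\le 8B_1B_2^2/|b_2|$ then gives the same contradiction.
\end{itemize}
So there are at most two solutions when $|r|>(16B_1B_2)^4$, and your divisor-bound step covers $|r|\le(16B_1B_2)^4$. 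This proves the lemma for $\Z$ and $\Z[i]$ at once, without the gcd parametrization.
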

\begin{proof}See the proof of Lemma 4.4 from \cite{countinglemmapaper}. \\
\end{proof}
Applying this lemma with $R=\C$, to our case, we get the following estimate for some $r\in \Z$, $r\neq 0$ (the estimate is trivial for $r=0$) :
\begin{multline*}
    \# \left\{ N_1, N_2 \in \Z, \ -(2N_1-k_1)^2-(2N_2-k_2)^2=r, \ |N_1|,|N_2|\le L^{1+\theta} \right\} \\\le \# \left\{ N_1, N_2 \in \Z, \ N_1^2+N_2^2=r, \ |N_1+k_1|,|N_2+k_2|\le L^{1+\theta}  \right\}\les_\nu L^{(1+\theta)2\nu}
\end{multline*}
for any $\nu>0$ small. \\
\noi One notices that the estimate from Lemma \ref{countinglemmahard} is better than the estimate from Lemma \ref{countinglemmasimple} which is good news as they are more nodes corresponding to counting problems associated to Lemma \ref{countinglemmahard} than to Lemma \ref{countinglemmasimple}. \\

\noi We first write :
\begin{multline*}
    \sum_{\s \in S^D} \sum_{k_n, \ n\in \mathcal{T}^D\cup\mathcal{T}^{D-}}\ind (k_n \ \text{admissible}) \ind(\s-\text{pairing  condition}) \\\les (2(n+1)D)!\sup_{\s \in S^D}\sum_{k_n, \ n\in \mathcal{T}^D\cup\mathcal{T}^{D-}}\ind (k_n \ \text{admissible}) \ind(\s-\text{pairing condition}) \, .
\end{multline*}
Let us fix $\s \in S^D$. We will proceed as in Section \ref{proofbound1}. One difference is that self-coupling is now possible i.e. some leaves in the tree $\mathcal{T}^D$ might be paired together. Let us denote by $0\le p\le 2D(n+1)$ the amount of pairs within the tree $\mathcal{T}^D$. Since all leaves must be paired, this means that $2D(n+1)-2p$ leaves in $\mathcal{T}^{D-}$ are paired with leaves in $\mathcal{T}^D$ and therefore that there are $p$ pairs in $\mathcal{T}^{D-}$ as well. \\
\noi We will use Lemma \ref{countinglemmahard} and Lemma \ref{countinglemmasimple} depending on the situation. We can use Lemma \ref{countinglemmahard} whenever the counting problem is associated to a node within a copy of the tree $\mathcal{T}$ or the root of a $\chi$ operator (see Figure \ref{tree}). We will use Lemma \ref{countinglemmasimple} for counting problems associated to the roots of the $\chi^*$ operator in Figure \ref{tree}. Finally, one notices that the leaf $k'$ corresponding to the placeholder is fixed which means one can avoid performing the last $\chi^*$ related counting problem, i.e. the one labeled by $k_{r_{2D-1}}$.\\
\noi We will proceed by using an algorithm that goes down the tree $\mathcal{T}^D$, doing counting estimates on the left hand side children which are copies of $\mathcal{T}$ first. The algorithm will then fix the values of the leaves paired to the leaves present in the child tree treated by the algorithm each time. The key fact is that if $r$ leaves are fixed for a tree with $n$ nodes, then one has to do only $n-r$ counting steps of the algorithm. \\
Let us denote the global counting problem by :
\begin{align*}
    M:= \sum_{k_n, \ n\in \mathcal{T}^D\cup\mathcal{T}^{D-}}\ind (k_n \ \text{admissible}) \ind(\s-\text{pairing condition}) \, .
\end{align*}
For a generic tree $\mathcal{T}_0$, we will denote by 
\begin{align*}
    M_{\mathcal{T}_0}
\end{align*}
the counting problem associated to the tree $\mathcal{T}_0$. If furthermore, we impose a set of conditions $E$ on the counting, we will denote its result by
$$M_{\mathcal{T}_0,E} \, .$$
We apply our algorithm which will first treat the root $r$ of the tree $\mathcal{T}^D$, then treat the nodes of the copy of the tree $\mathcal{T}$ which is the left hand child of the root $r$. Then we will treat the root of the operator $\chi^*$ which is the right hand child of the root $r$ and then the nodes of the copy of the tree which is the left hand child of the root of $\chi^*$. Then we can repeat this process going down the tree $\mathcal{T}^D$. Each time we treat a counting problem associated with some leaves, we fix the leaves paired with the leaves we have treated and keep in mind these conditions for the rest of the counting. \\
 Now let us consider that we have done our algorithm up to the root of a tree $\mathcal{T}_0$ with some conditions imposed we will not write for the sake of brevity. Then if $\mathcal{T}_0$ has children $\mathcal{T}_1$ and $\mathcal{T}_2$ (left and right hand respectively) with respective labels for their roots $k_0$, $k_1$ and $k_2$ and with $p$ pairs between the leaves of $\mathcal{T}_1$ and $\mathcal{T}_2$, three cases arise. \\
\begin{itemize}

\item If the root is not the root of a $\chi^*$ operator, we use Lemma \ref{countinglemmahard}. We get :
\begin{equation}\label{firstcasebound}
    M_{\mathcal{T}_{0}}\le L^\theta L^{d}T_{max}^{-1}\sup_{\substack{k_1}}M_{\mathcal{T}_1,\text{root is indexed by }k_1}\sup_{k_2, \ p \ \text{fixed leaves}}M_{\mathcal{T}_2,\text{root is indexed by }k_2, p \text{ fixed leaves}} \, .
\end{equation}

\item If the root is the root of  a $\chi^*$ operator i.e. is labeled by $k_{r_{2j-1}} \neq 0$ for $1\le j\le D$  then it corresponds to the counting problem treated in Lemma \ref{countinglemmasimple}. We get :
\begin{multline*}
    M_{\mathcal{T}_0}\le L^\theta \min(L^d,L^{d+1}T_{max}^{-1}\|k_{r_{2j-1}}\|_\infty^{-1})\sup_{\substack{k_1}}M_{\mathcal{T}_1, \text{root is indexed by }k_1} \\
    \times \sup_{k_2, \ p \ \text{fixed leaves}}M_{\mathcal{T}_2,\text{root is indexed by }k_2,p \ \text{fixed leaves}} \, .
\end{multline*}
Now we can use the fact that every root of a $\chi^*$ operator is the child of a $\chi$ operator. From the first case \eqref{firstcasebound}, we know $k_{r_{2j-1}}$ can take up to $L^\theta L^d T_{max}^{-1}$ values. The worst case is when these values are the smallest. We denote by $M_{\mathcal{T}_{-1}}$ the counting problem associated to the parent of the root of $\chi^*$ and $M_{\mathcal{T}_{0'}}$ the counting problem associated to the left hand sibling of the root of operator $\chi^*$. Thus, we get, omitting the conditions relative to the fixed leaves :
\begin{align*}
    M_{\mathcal{T}_{-1}}\les \sup_{k_{0'}}M_{\mathcal{T}_{0'}, \text{root is indexed by $k_{0'}$}}\sum_{\kappa=1}^{L^{\theta/d}LT_{max}^{-1/d}}\sum_{\kappa_2,...,\kappa_d=1}^{|\kappa|}M_{\mathcal{T}_0, (k_{r_{2j-1}}=(\kappa,\kappa_2,...,\kappa_d))} \, .
\end{align*}
 Hence we get that the double sum is less than :
\begin{multline*}
     \sum_{\kappa=1}^{L^{\theta/d}LT_{max}^{-1/d}}\sum_{\kappa_2,...,\kappa_d=1}^{|\kappa|}L^\theta \min(L^d,L^{d+1}T_{max}^{-1}\|k_{r_{2j-1}}\|_\infty^{-1})\sup_{\substack{k_1}}M_{\mathcal{T}_1, \text{root is indexed by $k_1$}} \\\times\sup_{k_2,\ p \ \text{fixed leaves}}M_{\mathcal{T}_2,\text{root is indexed by $k_2$}} \, .
     \end{multline*}
     We factorize by the terms depending on $\mathcal{T}_1$ and $\mathcal{T}_2$ and look at the remaining double sum :
     \begin{align*}
     \sum_{\kappa=1}^{L^{\theta/d}LT_{max}^{-1/d}}\sum_{\kappa_2,...,\kappa_d=1}^{|\kappa|}L^\theta \min(L^d,L^{d+1}T_{max}^{-1}\|k_{r_{2j-1}}\|_\infty^{-1})& = \sum_{\kappa=1}^{L^{\theta/d}LT_{max}^{-1/d}}|\kappa|^{d-1} \min(L^d,L^{d+1}T_{max}^{-1}\kappa^{-1}) \\
     & \les L^\theta(L^{2d}T_{max}^{-d}+L^{2d}T_{max}^{-2+1/d}) \\& \les L^\theta L^{2d}T_{max}^{-2+1/d} \, .
\end{align*}
Hence, we get :
\begin{multline*}
     M_{\mathcal{T}_{-1}}\les \sup_{k_{0'}}M_{\mathcal{T}_{0'}, \text{root is indexed by $k_{0'}$}}\sup_{\substack{k_1}}M_{\mathcal{T}_1, \text{root is indexed by $k_1$}} \\
     \times \sup_{k_2,\ p \ \text{fixed leaves}}M_{\mathcal{T}_2,\text{root is indexed by $k_2$}}L^\theta L^{2d}T_{max}^{-2+1/d}  \, .
\end{multline*}
And recalling the bound in the first case \eqref{firstcasebound}, it is as if the counting problem of $\chi^*$ had yielded the bound
\begin{equation} \label{secondcasebound}
L^dT_{max}^{-1+1/d} \, .
\end{equation}
For the rest of the counting, we can do all our computations as though it were the case.

 \item Another case is when the root is the root of a $\chi^*$ operator is labeled by $0$. Recall that $1~\le~ T_{max}~ \le~ L^{\frac{d}{2-1/d}}$. We want to use Lemma \ref{countinglemmasimple} but in this case, one can only get the trivial bound $L^{d+\theta}$. However, one can split the whole counting problem for the whole tree $\mathcal{T}^D$ in two problems : one where one (or more) labels of roots of $\chi^*$ operators are set to $0$ and one where they are not $0$. \\
 In the case where they are not $0$, one can refer to the two cases above. \\
 In the case where at least one of these labels is $0$, let us say $k_{r_1}=0$ for simplicity, one has that the first counting problem of the tree $\mathcal{T}^D$ is trivial as two labels are fixed among the three in the counting problem. We will denote by $M_\mathcal{T}$ the counting problem associated to the tree $\mathcal{T}$ linked to the left hand side child of the root in Figure \ref{tree} and $M_{r_1}$ the counting problem associated to the right hand side child of the root with fixed label for the root $0$. Thus, one gets :
\begin{equation*}
    M\le 1 \sup_{\text{root of }\mathcal{T} } M_{\mathcal{T}, k_{r_1}=0}\sup_{p \ \text{fixed leaves}}M_{r_{1}}
\end{equation*}
Now one can perform the counting from Lemma \ref{countinglemmasimple} on the root of the right hand side child and get the trivial bound $L^{d+\theta}$, i.e. one has taking similar notations and denoting by $p'$ the number of pairs between the left hand and right hand children of $r_1$ and $p_1$ the amount of leaves among the $p$ fixed that were on the left hand side child of $r_1$ (i.e. the blue tree), setting $p_2:=p-p_1$  :
\begin{equation*}
    M\le L^{d+\theta}\sup_{\text{root of }\mathcal{T}, \ k_{r_1}=0} M_{\mathcal{T}} \sup_{\text{root of blue }\mathcal{T}, \ p_1 \ \text{fixed leaves}} M_{blue \ \mathcal{T}}\sup_{k_{r_2}, \ p_2+p' \ \text{fixed leaves}}M_{r_{2}}
\end{equation*}
where $M_{blue \ \mathcal{T}}$ is the counting problem of the left hand side child of $r_1$ (i.e. the rest of the blue in tree in Figure \ref{tree}) and $M_{r_2}$ is the counting problem of the right hand side child of $r_1$.\\
Without the condition $k_{r_1}=0$, one would get :
\begin{equation*}
    M\le L^{d+\theta}T_{max}^{-1}L^{d+\theta}T_{max}^{-1+1/d}\sup_{\text{root of }\mathcal{T}} M_{\mathcal{T}} \sup_{\text{root of blue }\mathcal{T}, \ p_1 \ \text{fixed leaves}} M_{blue \ \mathcal{T}}\sup_{k_{r_2}, \ p_2+p' \ \text{fixed leaves}}M_{r_{2}}
\end{equation*}
but one notices that :
\begin{equation*}
    L^d \le L^{2d}T_{max}^{-2+1/d} \iff T_{max}\le L^{\frac{d}{2-1/d}}
\end{equation*}
which is verified. This reasoning can be applied to any root of a $\chi^*$ tree at any step of the overall counting algorithm as they are always a child of a root of a $\chi$ tree. This means that the worst case is when none of the roots of the $\chi^*$ operators are $0$.

\end{itemize}

\noi Now we just have to assess how many times we have to apply each counting estimate. \\

\noi First we justify that if a tree $\mathcal{T}$ has $p$ fixed leaves, $n$ branching nodes and a fixed root, then one has to perform $n-p$ counting lemmas to count its admissible decorations. To prove this we consider the following algorithms. \\
Whenever the two leaves of a node are fixed, we erase the two leaves and transform the node into a fixed leaf (whose wave number is determined by the wave numbers of its two children leaves). This transformation does not change the counting problem and diminishes by one the total amount of fixed leaves and branching nodes. We do this operation as long as there are such fixed pairs of leaves. Let us say we have done this operation $w$ times. Then one has in the end a tree with no pairs of fixed leaves, with $n-w$ branching nodes and $p-w$ fixed leaves. For such trees with no pairs, one proceeds by induction, the proof being trivial for size 1 trees and propagating to larger trees. \\

\noi Now let us go back to the estimate of $M$. We first apply our counting algorithms going down from the root to the nodes $r_i$ as in Figure \ref{tree}. We first count the left hand size child trees of these nodes, each time fixing more leaves on the right hand size until the counting potentially becomes trivial on the right of $\mathcal{T}^D$. In total, for each pair of leaves, one of the leaf will eventually be fixed by the algorithm (the one the most on the right in the pair), once its paired leaf has been counted. This means that the sum of counting estimates avoided by the pairs is equal to $p$. Now these avoided counting estimates can concern regular nodes or roots of $\chi^*$ operators. However, the worst case is when they concern regular  branching nodes since we can apply better counting estimates to them. This means one will apply in the worst case the estimate \eqref{secondcasebound} $D-1$ times for each node of $\chi^*$ trees (i.e. blue tress) except the last and the estimate \eqref{firstcasebound} $2DN+D-p$ times. This concludes the counting on $\mathcal{T}^D$. \\
Now remains the counting on $\mathcal{T}^{D-}$ but recall that all of its leaves with pairs in $\mathcal{T}^D$ are now fixed and it has $p$ pairs. Applying the same algorithm as for $\mathcal{T}^D$, we get that we have to apply estimate \eqref{secondcasebound} $D-1$ times and estimate \eqref{firstcasebound} $2nD+D-(2D(n+1)-2p)-p=p-D$ if $p\ge D$ times. Otherwise, one applies estimate \eqref{secondcasebound} $p$ times if $p\le D$.
\noi In the end, we have~~: 
\begin{align*}
    M&\le L^\theta (L^dT_{max}^{-1+1/d})^D (L^dT_{max}^{-1})^{2nD+D-p} \\
    & \hspace{2.5cm} \times \Big(\ind(p< D)(L^dT_{max}^{-1+1/d})^{p}+\ind(p\ge D)(L^dT_{max}^{-1})^{p-D}(L^dT_{max}^{-1+1/d})^D\Big)  \, .
    \end{align*}
    We simplify the terms between the parenthesis and get
    \begin{align*}
    M&\le L^\theta (L^dT_{max}^{-1+1/d})^D (L^dT_{max}^{-1})^{2nD+D}\Big(\ind(p<D)(T_{max}^{1/d})^{p}+ \ind(p\ge D)(T_{max}^{1/d})^D\Big) \\
    & \le L^\theta (T_{max}^{1/d})^D (L^dT_{max}^{-1})^{2(n+1)D}\Big(\ind(p<D)(T_{max}^{1/d})^{D}+ \ind(p\ge D)(T_{max}^{1/d})^D\Big) \\
    &=L^\theta (L^dT_{max}^{-1})^{2(n+1)D}(T_{max}^{1/d})^{2D}\, . 
\end{align*}

\noi Hence :
\begin{align*}
    \left|(\chi\chi^*)_{kk'}^D\right|^2 &\les L^\theta(2(n+1)D)! \left( \frac{\al T_{max}}{L^{d/2}}\right)^{4D(n+1)}  (L^dT_{max}^{-1})^{2(n+1)D}(T_{max}^{1/d})^{2D} \\
    &= L^\theta(2(n+1)D)! (\al T_{max}^{1/2})^{4D(n+1)}(T_{max}^{1/d})^{2D} \, .
    \end{align*}
    Here, we notice that because of the affine estimates on the nodes of $\chi^*$, the worst case is when the tree $\mathcal{T}$ is the smallest i.e. $n=0$. To get a geometric bound in $n$, we have to write :
    \begin{align*}
    \left|(\chi\chi^*)_{kk'}^D\right|^2& \le L^{\theta} (2(n+1)D)! (\al T_{max}^{1/2}(T_{max}^{1/d})^{\frac{1}{2(n+1)}})^{4D(n+1)} \\
    &\le L^\theta (2(n+1)D)! (\al T_{max}^{1/2}(T_{max}^{1/d})^{1/2})^{4D(n+1)} \\
    &=L^\theta (2(n+1)D)! \rho^{4D(n+1)} \, .
\end{align*}


\noi \textbf{In the case of Theorem \ref{theorem1}, we proceed similarly to Section \ref{proofbound1}, as self-coupling does not change the amount of counting we have to do, and apply $4D(n+1)$ times the trivial counting bound $L^\theta L^d$.} This yields :
\begin{equation} \label{skip2}
    M \le L^\theta L^{d4D(n+1)}
\end{equation}
hence :
\begin{align*}
    \left|(\chi\chi^*)_{kk'}^D\right|^2 &\les L^\theta(2(n+1)D)! \left( \frac{\al T_{max}}{L^{d/2}}\right)^{4D(n+1)} (L^d)^{2(n+1)D} \\
    &= L^\theta(2(n+1)D)! (\al T_{max})^{4D(n+1)} \\
    & \le L^\theta(2(n+1)D)! \rho^{4D(n+1)} \, .
\end{align*}

\noi But now we have to justify that this is L-certain uniformly in $k$ and $k'$ which requires us, a priori, to remove an infinite number of L-negligible probability sets. We will use Lemma \ref{test} below to go back to a finite controlled number of removed sets. We notice that $\mathcal{K}^{(D)}$ only depends on $k$ and $k'$ through the expressions $(k-k_{r_1})\cdot k_{r_1}$ and $k_{r_{2D-1}}\cdot (k'-k_{r_{2D-1}})$.
Now we state the aforementioned Lemma taken as Claim 3.7  from \cite{Deng_2021} :
\begin{lemma} \label{test}
    Define for $k \in \_L^d$ :
    \begin{equation*}
        f_{(k)}:m \in \Z_L^d \mapsto m\cdot (k+ m) \in \Z_{L^2}^d
    \end{equation*}
    and
    \begin{align*}
        Dom(f_{(k)}):= \left\{ m \in \Z_L^d, \ |m|\le L^\theta , \ |f_{(k)}(m)| \le L^{\theta^{-1}} \right\} \, .
    \end{align*}
    Then there exists $C>0$, $A\in \N$ and finitely many functions $f_1, ...,f_A\in\{f_{(k)},\ k\in \Z_L^d\}$ with $A \le L^{C\theta^{-1}}$ such that :
    \begin{align*}
        \forall \, k \in \Z_L^d, \ \exists \, 1 \le j \le A \ s.t \ |f_{(k)}-f_j| \le L^{-\theta^{-1}} \ \text{on $Dom(f_{(k)})$}.
    \end{align*}
\end{lemma}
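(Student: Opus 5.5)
The plan is to prove an exact version of the statement: the restriction of $f_{(k)}$ to $Dom(f_{(k)})$ is determined by a finite amount of discrete data, and this data takes at most $L^{C\theta^{-1}}$ values. The starting point is the identity $f_{(k)}(m)-f_{(k')}(m)=m\cdot(k-k')$. Hence on the set of $m$ with $|m|\le L^\theta$, the function $f_{(k)}$ is $|m|^2$ plus the restriction of the linear form $\ell_k:m\mapsto m\cdot k$. So it suffices to encode $\ell_k$ on $Dom(f_{(k)})$.

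\textbf{Step 1 (canonical basis).} The ball $B:=\{m\in\Z_L^d,\ |m|\le L^\theta\}$ has at most $L^{C}$ points (with $C$ depending on $d$), and we fix an ordering of it once and for all. For each $k$, let $m_1,\dots,m_r$ ($r\le d$) be the lexicographically first maximal linearly independent subfamily of $Dom(f_{(k)})$. Every $m\in Dom(f_{(k)})$ is then a linear combination $m=\sum_j c_jm_j$. By Cramer's rule, applied to an $r\times r$ minor of the matrix of $L m_1,\dots,Lm_r$ (integer entries of size $\le L^{1+\theta}$), the coefficients $c_j$ are rationals with denominator and numerator bounded by $L^{C}$. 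In particular, $c_j$ depends only on $m$ and on the tuple $(m_j)$, not on $k$. Consequently
\[
f_{(k)}(m)=|m|^2+\sum_j c_j\, a_j ,\qquad a_j:=m_j\cdot k .
\]

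\textbf{Step 2 (counting the data).} The data consists of the tuple $(m_1,\dots,m_r)$ and the values $(a_1,\dots,a_r)$. There are at most $L^{Cd}$ possible tuples. Since $m_j\in Dom(f_{(k)})$, we have $|a_j|\le |f_{(k)}(m_j)|+|m_j|^2\le L^{\theta^{-1}}+L^{2\theta}$, and $a_j\in L^{-2}\Z$. So each $a_j$ takes at most $L^{C\theta^{-1}}$ values. The total number of data is therefore $A\le L^{C\theta^{-1}}$.

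\textbf{Step 3 (representatives).} For each realized datum, pick one $k_j$ producing it and set $f_j:=f_{(k_j)}$. If $k$ produces the same datum as $k_j$, then for every $m\in Dom(f_{(k)})$ the coefficients $c_i$ are the same and $m_i\cdot k=m_i\cdot k_j$. By Step 1, $f_{(k)}(m)=f_j(m)$ exactly, which is stronger than the required $L^{-\theta^{-1}}$ closeness.

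The main obstacle is that the domain depends on $k$. It must not be assumed to coincide with $Dom(f_j)$, and it would be hopeless to record $\ell_k$ value by value on all $L^{C}$ points of $B$, since that gives $(L^{C\theta^{-1}})^{L^{C}}$ possibilities. The canonical basis choice resolves both issues. The linearity of $\ell_k$ reduces the data to $r\le d$ numbers. The conclusion is only needed on $Dom(f_{(k)})$, which lies in the span of the recorded $m_j$, so no information about $Dom(f_j)$ is required.
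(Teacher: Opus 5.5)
Your proof is correct, and it takes a different route from the paper, which gives no argument of its own and simply imports Claim 3.7 of \cite{Deng_2021}.

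You write $f_{(k)}(m)=|m|^2+m\cdot k$ and reduce to recording the linear form $m\mapsto m\cdot k$ on a basis $m_1,\dots,m_r$ of $\mathrm{span}\,Dom(f_{(k)})$ chosen inside $Dom(f_{(k)})$. You then use that $a_i=m_i\cdot k$ lies in $L^{-2}\Z\cap[-L^{\theta^{-1}}-L^{2\theta},L^{\theta^{-1}}+L^{2\theta}]$. Pigeonholing gives $A\le L^{C\theta^{-1}}$ and even exact equality $f_{(k)}=f_j$ on $Dom(f_{(k)})$, with no need to compare $Dom(f_{(k)})$ and $Dom(f_j)$.

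This exactness relies on the rationality of the square torus $\T_L^d$ used here. The tolerance $L^{-\theta^{-1}}$ in the statement is presumably inherited from \cite{Deng_2021}. There, general aspect ratios are allowed, and the analogue of $m\cdot k$ no longer takes values in a discrete set.

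Your argument also covers that case, and this is where your Cramer's rule bound actually matters. In Step 3 you only need uniqueness of the coordinates $c_i$. Uniqueness is also the correct reason they are independent of $k$; the ``in particular'' after the size bound is a non sequitur, though harmless. If the $a_i$ are only known up to $\varepsilon$, the bound $|c_i|\le L^{C}$ gives $|f_{(k)}-f_j|\le dL^{C}\varepsilon$ on $Dom(f_{(k)})$. So binning the $a_i$ at scale $L^{-C'-\theta^{-1}}$ yields the approximate statement with the same count.

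For the present paper the exact version is somewhat more convenient when the lemma is applied to $(\chi\chi^*)^D$. One then does not have to control how $\mathcal{K}^{(D)}$ and the sharp localizations $|\Omega_n-\sigma_n|\le T_{max}^{-1}$ respond to an $L^{-\theta^{-1}}$ perturbation.
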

\begin{proof} See Claim 3.7 from \cite{Deng_2021}.

\end{proof}
\noi This lemma allows us to get uniformity in $k$ and $k'$ up to losing $L^{C\theta^{-1}}$ $L$-negligible sets. We thus get in the end L-certainly :
\begin{align*}
     \|(\chi\chi^*)^D\|_{l^2\xrightarrow{}l^2} &\les L^{(d+\theta)} L^\theta(2(n+1)D)!^{1/2}  \rho^{2D(n+1)} \, . \\
\end{align*}
So :
\begin{align*}
    \|\chi\|_{l^2\xrightarrow{}l^2} &\les L^{\frac{d+\theta}{2D}} (2(n+1)D)!^{1/2D} \rho^{(n+1)}  \\
    & \le L^{\frac{d+\theta}{2D}} (4ND)^N \rho^{(n+1)} 
\end{align*}
hence we get our result taking $D$ large enough and then $L$ large enough. \\

\section{Sum to integral}
\noi This section aims at proving the following lemma :
\begin{lemma} \label{sumtointegral}
Let $B(x)\in \mathcal{S}(\R), \ g(x):=\left|\frac{\sin(x)}{x}\right|^2$, $\gamma \in (0,1)$, $d\ge 5$, $0\le t\le L^{2\gamma}$, $\Tilde{k}~\in~\Z^d$ fixed and define $k:=\Tilde{k}/L$ and :
\begin{align*}
    S_k&:=\frac{t}{L^d}\sum_{x\in \Z_L^d}B(x)g(tx\cdot(k-x)) \\
    I_k &:= t \int_{\R^d}B(x)g(tx\cdot(k-x))dx \, .
\end{align*}
Then we have :
\begin{align*}
    S_k=I_k+o_L(1) \, .
\end{align*}
\end{lemma}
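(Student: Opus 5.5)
Write $\Phi(x):=x\cdot(k-x)$. The plan is to compare $S_k$ and $I_k$ after separating the resonant structure from the smooth weight $B$. I read $B$ as a Schwartz function on $\R^d$, since it is evaluated at $x\in\Z_L^d$.

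The main tool is the Fourier representation of the Fejér-type kernel. Since $g(y)=|\sin y/y|^2$ has compactly supported Fourier transform, $\widehat g(\xi)=c\,(1-|\xi|/\pi)_+$ up to normalization, we can write
\begin{equation*}
tg(t\Phi(x))=c\int_{|\lambda|\le 2}(2-|\lambda|)\,t\,e^{i\lambda t\Phi(x)}\,d\lambda=c'\int_{|s|\le 2t}\Big(2-\frac{|s|}{t}\Big)e^{is\Phi(x)}\,ds .
\end{equation*}
Then
\begin{equation*}
S_k-I_k=c'\int_{|s|\le 2t}\Big(2-\frac{|s|}{t}\Big)\Big[L^{-d}\sum_{x\in\Z_L^d}B(x)e^{is\Phi(x)}-\int_{\R^d}B(x)e^{is\Phi(x)}dx\Big]ds .
\end{equation*}
The bracket is a Riemann-sum error for the oscillatory Gaussian integrand $B(x)e^{is x\cdot(k-x)}$. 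Apply Poisson summation in $x$ over the lattice $\Z_L^d=L^{-1}\Z^d$. The bracket equals $\sum_{m\in\Z^d\setminus\{0\}}\widehat{F_s}(Lm)$, where $F_s(x)=B(x)e^{is\Phi(x)}$. The quadratic phase $-s|x|^2+s\,x\cdot k$ is non-degenerate, so the Fourier transform of $B e^{is\Phi}$ at frequency $\xi$ is essentially $|s|^{-d/2}e^{i|\xi-sk|^2/(4s)}$ convolved with $\widehat B$. Two bounds then matter: a decay bound $|\widehat{F_s}(\xi)|\lesssim \min(1,|s|^{-d/2})$ uniformly, and rapid decay in $\xi$ once $|\xi|\gg |s|(1+|k|)+1$ (by nonstationary phase, using the Schwartz decay of $B$).

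I would split the $s$-integral into two regions.

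In the region $|s|\le L^{1-\epsilon}$, every frequency $Lm$ with $m\neq0$ is far outside the stationary region $|\xi|\lesssim |s|$. The localization of $B$ confines $x$ to $|x|\lesssim L^{\epsilon'}$, so the relevant $k$ can be taken bounded up to Schwartz tails. Repeated integration by parts then gives $O(L^{-A})$ per $m$, and summing over $m$ and over $|s|\le L$ is harmless.

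In the region $L^{1-\epsilon}\le|s|\le 2t\le 2L^{2\gamma}$, I would give up on cancellation and bound both terms separately. For the integral, stationary phase gives $|s|^{-d/2}$. For the sum, I would use Weyl differencing, or a Gauss-sum bound, on $L^{-d}\sum_x B(x)e^{is\Phi(x)}$. The phase is $s|x|^2$ with $x=n/L$, i.e. $(s/L^2)|n|^2$. Since $s/L^2\le L^{2\gamma-2}$ is small, the sum behaves like the integral by the van der Corput B-process: its main term is the integral, and the remaining dual terms are of size $(L^2/s)^{d/2}L^{-d}\cdot$(number of dual terms), which is again $O(|s|^{-d/2})$ up to $L^{0+}$ losses. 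This is essentially the same Poisson-based comparison, now with the stationary frequencies $Lm\approx sk$ genuinely present. Here $\gamma<1$ ensures that only $O(1)$ values of $m$ are stationary. Both pieces are then $O(|s|^{-d/2}L^{0+})$, and integrating over $|s|\ge L^{1-\epsilon}$ gives $O(L^{(1-\epsilon)(1-d/2)+0+})=o(1)$ since $d\ge5>2$. The condition $d\ge 5$ leaves room for the $L^{0+}$ and $\epsilon$ losses and for the degenerate directions noted below.

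The hardest step is the uniform bound on the discrete exponential sum in the intermediate range of $s$, uniformly in $k$. The dual frequencies $Lm$ can coincide with the stationary point $s(k/2)$-shift. Each such term contributes a Gaussian of size $|s|^{-d/2}$ rather than being negligible. I would handle it by explicitly counting the $m$ with $|Lm-sk|\lesssim |s|L^{\epsilon}$. There are $O((1+|s|/L)^dL^{0+})=O(L^{0+})$ of them because $|s|\le L^{2\gamma}$. The resulting extra factor $(|s|/L)^{d}$ in the worst case is exactly where $\gamma<1$ and $d\ge5$ must be balanced: the total is $\int |s|^{-d/2}(1+|s|/L)^d\,ds$. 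If this crude count does not suffice, I would refine it using the Gauss-sum structure of each stationary term.
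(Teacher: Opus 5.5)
The skeleton of your proof is the paper's: your Fej\'er representation of $g$ is its Fourier inversion (with $\tau=s/L^2$), your region $|s|\le L^{1-\epsilon}$ is its term $A$ (Poisson summation and non-stationary phase), and your stationary-phase bound $|s|^{-d/2}$ for the integral is its term $C$. The proof breaks on the discrete sum $\Sigma(s):=L^{-d}\sum_{x\in\Z_L^d}B(x)e^{is\Phi(x)}$ for $L^{1-\epsilon}\le|s|\le 2t$.

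The stationary dual frequencies are the $m$ with $|Lm-sk|\les |s|L^{0+}$, and there are about $(1+|s|/L)^dL^{0+}$ of them. Since $|s|$ reaches $L^{2\gamma}$, this is about $L^{(2\gamma-1)d}$ as soon as $\gamma>1/2$. The hypothesis $\gamma<1$ only gives $s/L^2<1$, not $|s|\les L$, so your claims that only $O(1)$ (or $O(L^{0+})$) values of $m$ are stationary are false. Your fallback integral is then, up to $L^{0+}$, $\int_L^{L^{2\gamma}}s^{d/2}L^{-d}\,ds\approx L^{(d+2)\gamma-d}$. This is $o(1)$ only for $\gamma<d/(d+2)$, whatever $d$ is, so your argument proves the lemma only in that range. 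Theorem \ref{wavetheorem} needs $t$ up to $T_{max}$, i.e.\ $\gamma$ close to $1$.

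A sharper count or a closer look at the individual stationary terms will not fix this. Take $s=2\pi L^2/q$ with $q$ odd and $L^{2-2\gamma}\les q\ll L$. The phase $e^{is\Phi(x)}$ is then $q$-periodic in each coordinate of $Lx$, while $B$ varies slowly on that scale. So the sum factorises into complete Gauss sums of modulus $\sqrt q$, giving $|\Sigma(s)|\approx q^{-d/2}\bigl|\int_{\R^d}B\bigr|\approx |s|^{-d/2}(|s|/L)^d$. The loss $(|s|/L)^d$ is real and cannot be removed by any bound that sees only the size of $s$.

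What is missing is to exploit the $s$-integration. The $s$ for which $s/L^2$ lies near a rational with small denominator form a set of small measure, so you need a mean-value estimate in $s$, not a pointwise one. The paper sums by parts in each coordinate to reduce to one-dimensional quadratic Weyl sums $G(\tau,x_j)$, then applies H\"older in $\tau$:
\begin{itemize}
\item four factors are bounded in $L^4_\tau$ by Hua's lemma, $\|G\|_{L^4}^4\les L^{2+}$;
\item the remaining $d-4$ factors are bounded in $L^\infty_\tau$ by Weyl's inequality with Dirichlet approximation, saving $L^{-(1-\gamma)}$ each, because $\tau\le t/L^2\le L^{2\gamma-2}$ forces $q\ges L^{2-2\gamma}$ whenever $a\neq 0$.
\end{itemize}
The outcome is $L^{0+}L^{-(d-4)(1-\gamma)}=o(1)$, and this is exactly where $\gamma<1$ and $d\ge 5$ enter. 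A major/minor arc decomposition of $\Sigma(s)$, integrated in $s$, would be another way to supply the same averaging. Some such averaging is indispensable for $\gamma\ge d/(d+2)$.
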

\begin{proof} We set $\delta,  \eps >0$, two small parameters we will determine later.
We use a smooth non-negative cut-off function $\chi(x)$ verifying $\chi = 1$ on $[-1,1]$ and $\chi =0$ on $[-2,2]^c$ and get by decay of $B$ and because $g$ is bounded :
\begin{align*}
    S_k&=\frac{t}{L^d}\sum_{x\in \Z^d}\chi(x/L^{1+\delta})B(x/L)g(\mu x\cdot(k-x))+o_L(1) \\
    I_k&=\frac{t}{L^d} \int_{\R^d}\chi(x/L^{1+\delta})B(x/L)g(\mu x\cdot(k-x))dx+o_L(1)
\end{align*}
where $\mu:=\frac{t}{L^2}$. From now on, we will consider the truncated version of $S_k$ and $I_k$ but keep their original notations.\\
By inverse Fourier transform,
\begin{align*}
    S_k-I_k=\int_{-\mu}^\mu S_k(\tau)-I_k(\tau)d\tau
\end{align*}
where :
\begin{align*}
    S_k(\tau)&:=\frac{t}{\mu L^d}\sum_{x\in \Z^d}\chi(x/L^{1+\delta})B(x/L)\Ft(g)(\tau /\mu)e^{i\tau x \cdot (k-x)} \\
    I_k(\tau)&:=t\mu^{-1}L^{-d} \int_{\R^d}B(x/L)\chi(x/L^{1+\delta})\Ft(g)(\tau/\mu)e^{i\tau x\cdot (k-x)} \, .
\end{align*}
So :
\begin{align*}
    \left|S_k-I_k \right| & \le \int_{|\tau|<\frac{1}{4}L^{-1-\delta}}|S_k(\tau)-I_k(\tau)|d\tau + \int_{\frac{1}{4}L^{-1-\delta}\le |\tau| <\mu}|S_k(\tau)|d\tau + \int_{\frac{1}{4}L^{-1-\delta}\le |\tau| <\mu}|I_k(\tau)|d\tau \\
    &=: A+B+C \, .
\end{align*}
\begin{itemize}
    \item \textbf{A term :} By Poisson summation :
    \begin{align*}
        \int_{|\tau|<\frac{1}{4}L^{-1-\delta}}S_k(\tau)d\tau=L^{-d}t\mu^{-1}\int_{|\tau|<\frac{1}{4}L^{-1-\delta}} \sum_{c\in\Z^d}\int_{\R^d}B(z/L)\chi(z/L^{1+\delta})\Ft(g)(\tau/\mu)e^{-ic\cdot z+i\tau z \cdot (k-z)}dzd\tau \, .
    \end{align*}
    For $c=0$, we get :
    \begin{align*}
        L^{-d}t\mu^{-1}\int_{|\tau|<\frac{1}{4}L^{-1-\delta}} \int_{\R^d}B(z/L)\chi(z/L^{1+\delta})\Ft(g)(\tau/\mu)e^{i\tau z \cdot (k-z)}dzd\tau&=\int_{|\tau|<\frac{1}{4}L^{-1-\delta}}I_k(\tau)d\tau \, .
    \end{align*}
    For $c\neq 0$, we get :
    \begin{align*}
        L^{-d}t\mu^{-1}&\int_{|\tau|<\frac{1}{4}L^{-1-\delta}} \sum_{c\in\Z^d-\{0\}}\int_{\R^d}B(z/L)\chi(z/L^{1+\delta})\Ft(g)(\tau/\mu)e^{-ic\cdot z+i\tau z \cdot (k-z)}dzd\tau\\ & = t\mu^{-1}\int_{|\tau|<\frac{1}{4}L^{-1-\delta}} \sum_{c\in\Z^d-\{0\}}\int_{\R^d}B(z)\chi(z/L^{\delta})\Ft(g)(\tau/\mu)e^{-i\phi_L(z)}dzd\tau 
    \end{align*}
    where $\phi_L(z)=Lc\cdot z-L^2\tau z \cdot (k-z)$, $\nabla \phi_L(z)=Lc-L^2\tau (k-2z)$, $\text{Hess} \phi_L(z)=2\tau L^2Id$. Now for  $L$ large enough, we have that $|k-z|\le 2L^\delta$ and so if $\tau L^{1+\delta}\le 1/4$, then $\left|\nabla \phi_L(z)\right|\ge L$ as $c \neq 0$. Furthermore, one has $\left| \frac{\text{Hess} \phi_L(x)}{\left|\nabla \phi_L(x)\right|^2}\right|\les \tau \les L^{-1-\delta}$ where we integrate, so by the non-stationary phase argument, one has :
    \begin{align*}
        &\left|t\mu^{-1}\int_{|\tau|<\frac{1}{4}L^{-1-\delta}} \sum_{c\in\Z^d-\{0\}}\int_{\R^d}B(z)\chi(z/L^{\delta})\Ft(g)(\tau/\mu)e^{-i\phi_L(z)}dzd\tau \right| \\ 
        & \hspace{1cm} \les t\mu^{-1}\int_{|\tau|<\frac{1}{4}L^{-1-\delta}}|\Ft(g)(\tau/\mu) |\sum_{c\in\Z^d-\{0\}}\left|\int_{\R^d}f(z)e^{-ic\cdot z}dz\right|d\tau  \\
        & \hspace{1cm} \text{where $f(z):=ie^{i\tau z\cdot(k-z)}\left[\frac{\sum_{j=1}^d\partial_j(B\chi(\cdot/L^{\delta}))(z)}{\sum_{j=1}^d\partial_j \phi(z)}-\frac{B(z)\chi(z/L^{\delta})\sum_{j=1}\sum_{l=1}^d\partial_{j,l}\phi(z)}{|\sum_{j=1}^d\partial_j \phi(z)|^2}\right]$} \\
        &\hspace{1cm} \les t\mu^{-1}\int_{|\tau|<\frac{1}{4}L^{-1-\delta}}|\Ft(g)(\tau/\mu) |L^{-1}\sum_{c\in\Z^d-\{0\}}\left|L\Ft(f)(c)\right|d\tau \\
        &\hspace{1cm} \les L^2 L^{-1-\delta}L^{-1} \ \ \ \text{because $Lf\in \mathcal{S}(\R)$} \\
        &\hspace{1cm}=o_L(1) \, .
    \end{align*}
    \item \textbf{C term :} We have :
    \begin{align*}
        C=\int_{\frac{1}{4}L^{-1-\delta}\le |\tau| <\mu}\left|t\mu^{-1} \int_{\R^d}B(x)\chi(x/L^{\delta})\Ft(g)(\tau/\mu)e^{i\psi_L(x)}dx\right|d\tau
    \end{align*}
    where $\psi_L(x)=\tau L^2x\cdot(k-x)$, $\nabla \psi_L(x)=\tau L^2 (k-2x)$, $\text{Hess} \psi_L(x)=-2\tau L^2 Id$. So, $\nabla\psi_L(x)=0 \iff x=k/2$ which is compatible with $|x|\les L^\delta$ for $L$ large enough (remember that $Lk$ is fixed). So by the stationary phase argument, we have :
    \begin{align*}
        \left|t\mu^{-1} \int_{\R^d}B(x)\chi(x/L^{\delta})\Ft(g)(\tau/\mu)e^{i\psi_L(x)}dx\right| \les t\mu^{-1} \left|\tau L^2\right|^{-d/2}
    \end{align*}
    hence :
    \begin{align*}
        C&\les\int_{\frac{1}{4}L^{-1-\delta}\le |\tau| <\mu}\left|\Ft(g)(\tau/\mu)\right|t\mu^{-1} \left|\tau L^2\right|^{-d/2}d\tau \\
        &\les \begin{cases}
            L^2 L^{-d}L^{(-1-\delta)(-d/2+1)}=L^{d(-1/2+\delta/2)}L^{1-\delta}\les L^{-1/2+d\delta} \ \ \text{for $d\ge3$}. \\
            L^2 L^{-d}\log\left(\frac{tL^\delta}{L}\right)=\log\left(\frac{tL^\delta}{L}\right) \ \ \text{for $d=2$}. \\
            L^2 L^{-d}t^{1/2}L^{-1}=t^{1/2} \ \ \text{for $d=1$}.
        \end{cases}
    \end{align*}
    For this technique to work in the case $d=2$, we would need $t=L^{1-\delta}+o(L^{1-\delta})$. \\
    
    \item \textbf{B term :} We have first by forgetting about the absolute value :
    \begin{align*}
        B=\mu^{-1}\int_{\frac{1}{4}L^{-1-\delta}<|\tau|<\mu}\Ft(g)(\tau/\mu)F(\tau)d\tau
    \end{align*}
    where
    \begin{align*}
        F(\tau)=L^{-d}t\sum_{x\in \Z^d}B(x/L)\chi(x/L^{1+\delta})e^{i\tau x \cdot (k-x)}\, .
    \end{align*}
    We set $p_i(x):=x_i(k_i-x_i)$ for $1\le i \le d$ and $G(s,n):=\sum_{q=0}^n e^{isq(k-q)}$ for $n\in \N$, $G(s,x):=G(s,[x])$ for $x\in \R$ where $[.]$ denotes the integer part function. \\
    Now by integration by parts, we get, forgetting about negligible boundary terms (B.T) (each boundary is evaluated at $0$ for some of the variables $(x_i)_{1\le i\le d}$ so it involves less integration and thus is smaller), and considering only the case $x\ge 0$ since the other cases are similar :
    \begin{align}
        F(\tau)=\frac{t}{L^d}\int_{x_i \ge 0}\partial_{x_1}...\partial_{x_d}\left[B(x/L)\chi(x/L^{1+\delta})\right]\prod_{j=1}^d G(\tau,x_j)dx_j + \textbf{B.T} \, .
    \end{align}
    We first estimate G. \\
    By Hua's lemma (\cite{Hualemma}) :
    \begin{align*}
        \|G(\tau,x_j)\|_{L_\tau^4([0,1])}, \ \|G(\tau,x_j)\|_{L_\tau^3([0,1])} \les |x_j|^{1/2+\delta}\les L^{(1+\delta)(1/2+\delta)} \, .
    \end{align*}
    Let $s\in [0,1]$ and $0\le a < q \le n$ be integers such that $(a,q)=1$ and $\left|s-\frac{a}{q}\right|<\frac{1}{qn}$ which exist by Dirichlet's approximation theorem. Then by Weyl's sum estimate :
    \begin{align*}
        |G(s,n)|\le \frac{n}{\sqrt{q}(1+n\|s-a/q\|^{1/2})}\le \frac{n}{\sqrt{q}} \, .
    \end{align*}
    In our case, $s\in [L^{-1-\delta},L^{-\eps}]$ where $\eps:=2-2\gamma$ as $t\le L^{2\gamma}$. So either $|n|<L^{2\eps}$ or $a/q \les L^{-\eps} $. \\
    \noi If $a=0$, then $|G(s,n)|\les |s|^{-1/2}\les L^{3/4}$. \\
    \noi If $a\neq 0$, then $|G(s,n)|\les L^{1+\delta-\eps/2}$ in both cases listed above, using $|n|\les L^{1+\delta}$. \\
    We finally use Hölder in $\tau$ and get for $d\ge 5$, 
    \begin{align*}
        |B| & \les \|G\|_{L^4}^{4}\|G\|_{L^\infty}^{d-4}L^{-d}\mu^{-1}t\int_{x_i \ge 0}\left|\partial_{x_1}...\partial_{x_d}\left[B(x/L)\chi(x/L^{1+\delta})\right]\right|dx_j \\ 
        & \les L^{4(1+\delta)(1/2+\delta)}\|G\|_{L^\infty}^{d-4}L^{-d}\mu^{-1}t\int_{x_i \ge 0}L^{-d}\left|\partial_{x_1}...\partial_{x_d}\left[B(x)\chi(x/L^{\delta})\right]\right|L^ddx_j \\ 
        & \les  L^{4(1+\delta)(1/2+\delta)}L^{(d-4)(1+\delta-\eps/2)}L^{-d}L^2 \\
        &\les L^{2\delta+4\delta^2+2\eps}L^{d(\delta - \eps/2)} = o_L(1) \ \ \ \text{for $\delta$ and $\eps$ small enough, well chosen.}
    \end{align*}
    For $d=4$, we obtain $L^{6\delta +4\delta^2}\neq o_L(1)$. \\
    
\end{itemize}
Hence our result for $d\ge 5$.
\end{proof}
\section{Proof of Theorem \ref{wavetheorem}}
We present the proof of the kinetic derivation in Theorem \ref{wavetheorem}.
We will denote by $E_L$ the set as in Theorem \ref{wavetheorem} where Propositions \ref{bound1} and \ref{bound2} hold true. We use the fact that for $0\le n\neq n' \le 2N$, we have :
\begin{equation*}
    \E[J_n\cj{J_{n'}}]=0
\end{equation*}
as there are not the same amount of conjugated and non conjugated Gaussian random variables. \\
\noi Then we have for $0\le t=sT_{max} \le T_{max}$ and $k\in \Z_L^d$ :
\begin{multline*}
    \E\left[|\Ft_{x}(u)(t,k)|^2 \ind_{E_L}\right]  = \E\left[|J_0(t/T_{max},k)|^2 \ind_{E_L}\right] + \E\left[|J_1(t/T_{max},k)|^2 \ind_{E_L}\right] \\  + \sum_{\substack{2 \le n \le N}}\E\left[|J_n(t/T_{max},k)|^2 \ind_{E_L}\right] + 2\sum_{0 \le n \le N}\E\left[\Re \left(\cj{R_{N+1}(t/T_{max},k)}J_n(t/T_{max},k)\right)\ind_{E_L}\right]  \\
     + \E\left[|R_{N+1}(t/T_{max},k)|^2 \ind_{E_L}\right] \, .
\end{multline*}
We set for $0\le t \le T_{max}$ and $k\in \Z^d$ :
\begin{multline*}
    r_1(k,\frac{t}{T_{kin}},L):=  \sum_{\substack{2 \le n \le 2N}}\E\left[|J_n(t/T_{max},k/L)|^2 \ind_{E_L}\right] \\+ 2\sum_{0 \le n \le N}\E\left[\Re \left(\cj{R_{N+1}(t/T_{max},k/L)}J_n(t/T_{max},k/L)\right)\ind_{E_L}\right]  + \E\left[|R_{N+1}(t/T_{max},k/L)|^2 \ind_{E_L}\right] \, .
\end{multline*}
We use Propositions \ref{bound1} and \ref{bound2}, Cauchy-Schwarz inequality and Sobolev injections to get for $s$ large enough, $b>1/2$ :
\begin{align*}
    \lim_{L\xrightarrow{}+\infty}\sup_{k\in \Z^d}\big|r_1(k,\frac{t}{T_{kin}},L)\big|L^{0+}=0 \, .
\end{align*}
\noi We set for $0\le t \le T_{max}$ and $k\in \Z^d$ :
\begin{align*}
    r_2(k,\frac{t}{T_{kin}},L)&:=\E\left[|J_0(t/T_{max},k/L)|^2 \ind_{E_L}\right] - \E\left[|J_0(t/T_{max},k/L)|^2 \right] 
\end{align*}
\noi and get by similar arguments for $0\le t \le T_{max}$ and $k\in \Z_L^d$ :
\begin{align*}
    \E\left[|J_0(t/T_{max},k)|^2 \ind_{E_L}\right] = \E\left[|J_0(t/T_{max},k)|^2 \right] + r_2(Lk,\frac{t}{T_{kin}},L) = n_{in}(k)+r_2(Lk,\frac{t}{T_{kin}},L)
\end{align*}
where $r_2(Lk,\frac{t}{T_{kin}},L)= O_L(e^{-cL^{\theta}})$. \\

\noi We define $S_k$ and $I_k$ as in Lemma \ref{sumtointegral}. We write for $0\le t \le T_{max}$ and $k\in \Z_L^d$ :
\begin{align*}
    \E\left[|J_1(t/T_{max},k)|^2 \ind_{E_L}\right]& =2\frac{\al^2 t^2}{L^d}\sum_{\substack{k_1+k_2=k \\ k_1,k_2 \in \Z_L^d}}n_{in}(k_1)n_{in}(k_2)\left(\frac{\sin(t\pi 2k_1 \cdot k_2)}{t\pi 2 k_1 \cdot k_2} \right)^2 \\
    &= 2 \al^2 t S_k   \, .
    \end{align*}
    We set for $0\le t \le T_{max}$ and $k\in \Z^d$ :
    \begin{align*}
        r_3(k,\frac{t}{T_{kin}},L):=2 \al^2 t S_{k/L} -2 \al^2 t I_{k/L} \, ,
    \end{align*}
    and get for $0\le t \le T_{max}$ and $k\in \Z_L^d$ :
    \begin{align*}
    \E\left[|J_1(t/T_{max},k)|^2 \ind_{E_L}\right]& =2 \al^2 t I_k +r_3(Lk,\frac{t}{T_{kin}},L) 
    \end{align*}
    where $r_3(Lk,\frac{t}{T_{kin}},L)=o_L(\al^2 t)$ by Lemma \ref{sumtointegral}. \\
    We finally set for $0\le t \le T_{max}$ and $k\in \Z^d$ :
    \begin{multline*}
        r_4(k,\frac{t}{T_{kin}},L):=2 \al^2 t \int_{\R^d}n_{in}(x)n_{in}(k/L-x)\left(\frac{\sin(t\pi 2x \cdot (k/L-x))}{t\pi 2 x \cdot (k/L-x)} \right)^2dx \\- 2 \al^2t \int_{\R^d}n_{in}(x)n_{in}(k/L-x)\delta(x \cdot (k/L-x))dx \,.
    \end{multline*}
    We finally have for $0\le t \le T_{max}$ and $k\in \Z_L^d$ :
    \begin{align*}
    \E\left[|J_1(t/T_{max},k)|^2 \ind_{E_L}\right]&=2 \al^2 t \int_{\R^d}n_{in}(x)n_{in}(k-x)\left(\frac{\sin(t\pi 2x \cdot (k-x))}{t\pi 2 x \cdot (k-x)} \right)^2dx + r_3(Lk,\frac{t}{T_{kin}},L)  \\
    &= 2 \al^2t \int_{\R^d}n_{in}(x)n_{in}(k-x)\delta(x \cdot (k-x))dx + r_3(Lk,\frac{t}{T_{kin}},L) \\
    &+r_4(Lk,\frac{t}{T_{kin}},L)
\end{align*}
where $r_4(Lk,\frac{t}{T_{kin}},L)=o_L(\al^2)$ for $t\ge L^{0+}\xrightarrow[]{}+\infty$ by a classical approximate identity result as the test function is a Schwartz function.  \\

\noi Hence our result as $\al^2=T_{kin}^{-1}$, $\al^2t \le T_{max}/T_{kin}\le L^{-\delta}$ and setting $r~:=~r_1~+~r_2~+~r_4$.

\appendix
\section{Empirical measure interpretation}\label{empirical measure}

First we give the heuristic equivalent of the convergence of an empirical measure. This approach is inspired by the many particles case. 
Set for $k\in \Z^d$ :
\begin{align*}
    \mu_k(t) := \frac{1}{L^d}\sum_{k\in \Z^d}|\Ft_{x}(u)(t,k/L)|^2\delta_{k/L}
\end{align*}
where $\delta_x$ denotes the Dirac measure supported on $x\in \R$. \\

\noi Then, we wish to show for some class of $g$ :
\begin{align*}
    \left(\E[\mu_k(t)],g \right)&=\frac{1}{L^d}\sum_{k\in \Z_L^d}\E\big[|\Ft_{x}(u)(t,k)|^2\big]g(k) \xrightarrow[L\xrightarrow{}+\infty]{}\int_{\R^d}g(k)f(t/T_{kin},k)dk
\end{align*}
meaning we would get a convergence :
\begin{align*}
    \E[\mu_k(t)]\xrightarrow[L\xrightarrow{}+\infty]{\text{in some weak sense}}f(t/T_{kin},k)dk
\end{align*}
where $f$ is the solution to \eqref{limitequation}. \\
\noi This can be somewhat justified by the following proposition :
\begin{proposition} Let $f$ be the solution to \eqref{limitequation} given by Proposition \ref{LWP} up to a maximal time of existence $T^*$. Let $0<\delta < T^*$ such that we have a stronger convergence result i.e.~\eqref{strongerconvergence}. Then, we have for $0 \le t\le \delta$ :
    \begin{align*}
    \E[\mu_k(t)]\xrightarrow[L\xrightarrow{}+\infty]{\text{tempered distribution}}f(t,k)dk \, .
\end{align*}
\end{proposition}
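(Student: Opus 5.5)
The plan is to test the measure $\E[\mu_k(t)]$ against a Schwartz function $g\in\mathcal{S}(\R^d)$ and reduce to a Riemann sum. By definition,
\begin{align*}
\big(\E[\mu_k(t)],g\big)=\frac{1}{L^d}\sum_{k\in\Z_L^d}\E\big[|\Ft_x(u)(t,k)|^2\big]g(k).
\end{align*}
I will write $\E[|\Ft_x(u)(t,k)|^2]=f(t,k)+e_L(t,k)$, where
\begin{align*}
e_L(t,k):=\E[|\Ft_x(u)(t,k)|^2]-f(t,k).
\end{align*}
The assumed stronger convergence \eqref{strongerconvergence}, read on the kinetic timescale (the proposition uses rescaled time $t$ in $f$), gives $\varepsilon_L:=\sup_{t\le\delta}\sup_{k}|e_L(t,k)|\to0$. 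The pairing then splits into a main Riemann sum $\frac{1}{L^d}\sum_k f(t,k)g(k)$ and an error $\frac{1}{L^d}\sum_k e_L(t,k)g(k)$.

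For the error term, the bound is
\begin{align*}
\Big|\frac{1}{L^d}\sum_k e_L(t,k)g(k)\Big|\le \varepsilon_L\,\frac{1}{L^d}\sum_{k\in\Z_L^d}|g(k)|.
\end{align*}
Since $g$ is Schwartz, $\frac{1}{L^d}\sum_{k\in\Z_L^d}|g(k)|\les\int\jb{x}^{-d-1}dx+o(1)$ is bounded uniformly in $L$. Hence the error tends to $0$ uniformly in $t\in[0,\delta]$.

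For the main term, I need $\frac{1}{L^d}\sum_{k\in\Z_L^d}f(t,k)g(k)\to\int f(t,k)g(k)\,dk$. This is the delicate step, because Proposition \ref{LWP} only gives $f(t)\in L^2_s$, which is too weak for Riemann sums to converge: pointwise values on the lattice are not even defined. I see two options:
\begin{itemize}
\item Add regularity. I would propagate regularity in the fixed point of Proposition \ref{LWP}: since $n_{in}\in\mathcal S$, one can rerun the contraction in $H^m\cap L^2_s$ with $m>d/2+1$, using the fractional Leibniz argument with derivatives distributed as for $\jb{x}^s$, and get $f(t)\in C^1$ with decay. Then the Riemann sum of $fg$ converges by the mean value theorem together with tail control from the weights.
\item Interpret \eqref{strongerconvergence} as already encoding that $f(t,\cdot)$ is continuous, with $|f(t,k)|\les\jb{k}^{-s}$ uniformly in $t\le\delta$. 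In that case the Riemann sum converges by dominated convergence for Riemann sums of continuous functions with integrable majorant.
\end{itemize}
I would take the first option, since it needs no extra hypothesis on the meaning of \eqref{strongerconvergence}.

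Combining the two terms gives convergence for every Schwartz $g$, which is convergence in $\mathcal S'$. As a preliminary, I would check that $\E[\mu_k(t)]$ is itself a tempered distribution. This holds because $\E|\Ft_x(u)|^2$ is bounded: it equals $f+e_L$, and $f$ is bounded once the added regularity is in place.
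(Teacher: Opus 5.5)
Your argument is the paper's. The paper also tests against $g\in\mathcal{S}(\R^d)$ and uses \eqref{strongerconvergence} to bound the gap between $\frac{1}{L^d}\sum_k \E|\Ft_x(u)(t,k)|^2 g(k)$ and $\frac{1}{L^d}\sum_k f g$ by $\sup_{t,k}\big|\E|\Ft_x(u)|^2-f\big|\cdot\frac{1}{L^d}\sum_k|g(k)|$. It then disposes of the remaining difference $\frac{1}{L^d}\sum_k fg-\int fg$ with the single phrase ``Riemann summation''. So your error term is handled identically. The only place you go beyond the paper is the Riemann-sum step, and you are right that $f(t)\in L^2_s$ alone would not justify it.

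On that step, your second option is what the paper implicitly uses, and it is not an extra hypothesis. The assumption \eqref{strongerconvergence} evaluates $f(t/T_{kin},\cdot)$ at the points of $\Z_L^d$, which has no meaning for an $L^2_s$ equivalence class. So the proposition already refers to a pointwise-defined, in practice continuous, representative. You also need less than you state there. Since $g$ is Schwartz, it suffices that $f(t,\cdot)$ is continuous with at most polynomial growth. Then $fg$ is continuous and dominated by $C\jb{k}^{-d-1}$, and uniform continuity on compacts plus tails that are small uniformly in $L$ give convergence of the Riemann sums. No decay $\jb{k}^{-s}$ of $f$ is needed.

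Your preferred first option is not the routine rerun you describe, for three reasons.
\begin{itemize}
\item The resonance constraint is not translation invariant. Writing $\delta(y\cdot(x-y))=2\delta(|x|^2-|y|^2-|x-y|^2)$, an $x$-derivative of $C(f,g)$ produces a $\delta'$ term besides $C(f,\nabla g)$.
\item In the representation used in the proof of Proposition~\ref{LWP}, this shows up as $\nabla_x$ falling on $e^{i|x|^2\tau}$ and creating a factor $2i\tau x$. After integrating by parts in $u$, you are left with $\tau$-weighted bilinear terms that the $L^2_\tau$ endpoint Strichartz bound does not control. The weight $\jb{x}^s$ was harmless precisely because multiplying by it never differentiates that phase.
\item A contraction in $H^m\cap L^2_s$ only gives regularity on a time interval governed by the stronger norm. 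Reaching every $t\le\delta$, with $\delta$ possibly close to $T^*$, would need a separate persistence-of-regularity estimate.
\end{itemize}
Even then you would still have to read \eqref{strongerconvergence} with the continuous representative, so option 1 does not remove the interpretive step. Keep option 2 and present it as the content of the hypothesis.
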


\begin{proof} \label{proofofempiricalmeasure}
Let $g\in \mathcal{S}(\R^d)$ and $t\in [0,\delta T_{kin}[$ :
    \begin{multline*}
    \Bigg| \frac{1}{L^d}\sum_{k\in \Z_L^d}|\E[\Ft_{x}(u)(t,k)|^2]g(k) -\int_{\R^d}g(k)f(t/T_{kin},k)dk\Bigg|  \\
    \le \Bigg| \frac{1}{L^d}\sum_{k\in \Z_L^d}|\E[\Ft_{x}(u)(t,k)|^2]g(k) -\frac{1}{L^d}\sum_{k\in \Z_L^d}f(t/T_{kin},k)g(k)\Bigg| \\ +\Bigg| \int_{\R^d}g(k)f(t/T_{kin},k)dk -\frac{1}{L^d}\sum_{k\in \Z_L^d}f(t/T_{kin},k)g(k)\Bigg| \, .
   \end{multline*}
   We use Riemann summation and the convergence assumption to show this is 
   \begin{align*}
    &\le \sup_{t \in [0,\delta ]} \sup_{k\in \Z_L^d}\big|\E[\Ft_{x}(u)(t,k)|^2] -f(t/T_{kin},k)\big|\frac{1}{L^d}\sum_{k\in \Z_L^d}|g(k)|  +o_L(1)  \\
   & = o_L(1) \, .
\end{align*}
Therefore, we have convergence at least in the tempered distribution sense i.e. in $\mathcal{S}'$.
\end{proof}
\begin{remark}
    Now formally, one could approach $\E[\mu_k]$ by a Law of Large Numbers. This would translate our result into a Law of Large Numbers. 
\end{remark}

\section{Proof of Lemma \ref{time of existence}}\label{proofoflemma}
\noi We first take $u_0$ as in Theorem \ref{theorem1} and compute :
\begin{align*}
    \|u_0\|_{H^s}^2&=\al^2 \int_{\T_L^d}\Big|\frac{1}{L^{d/2}}\sum_{k\in\Z^d}\jb{k/L}^s\sqrt{n_{in}(k/L)}g_ke^{i2\pi (k/L)\cdot x}\Big|^2dx \\
    &=\al^2 \frac{1}{L^d}\int_{\T_L^d}\Big(\sum_{k\in\Z^d}\jb{k/L}^s\sqrt{n_{in}(k/L)}g_ke^{i2\pi (k/L)\cdot x}\Big)\cj{\Big(\sum_{k\in\Z^d}\jb{k/L}^s\sqrt{n_{in}(k/L)}g_ke^{i2\pi (k/L)\cdot x}\Big)}dx\\
    &=\al^2 \sum_{k\in\Z^d}\jb{k/L}^{2s}n_{in}(k/L)|g_k|^2 
\end{align*}
hence
\begin{align*}
    \E\big[\|u_0\|_{H^s}^2\big]=\al^2 \sum_{k\in\Z_L^d}\jb{k}^{2s}n_{in}(k)
\end{align*}
 which yields $\E\big[\|u_0\|_{H^s}^2\big]^{1/2}\sim \al C_s^{1/2}L^{d/2}$ where $C_s=\int_\R n_{in}(x)\jb{x}^{2s}dx$ is independent of $L$.
By independence,
\begin{align*}
    V\big(\|u_0\|_{H^s}^2\big)&=\al^4 \sum_{k\in\Z^d}\jb{k/L}^{4s}n_{in}^2(k/L)V\big(|g_k|^2\big )  \\
    &=\al^4 \sum_{k\in\Z_L^d}\jb{k}^{4s}n_{in}^2(k) \\
    &\sim \al^4 L^d c_s \ \ \ \text{where} \ c_s:=\int_\R \jb{x}^{4s}n_{in}^2(x)dx \, .
\end{align*}
So
\begin{align*}
    \P\Big(\big|\|u_0\|_{H^s}^2-\E\big[\|u_0\|_{H^s}^2\big]\big|\ge \frac{\E\big[\|u_0\|_{H^s}^2\big]}{2}\Big) &\le \frac{4V\big(\|u_0\|_{H^s}^2\big)}{\E\big[\|u_0\|_{H^s}^2\big]^2} \\
    & \sim \frac{4c_s}{C_s^2}L^{-d} \, .
\end{align*}
So for $L$ large enough, on an event of probability $\ges 1-\frac{4c_s}{C_s^2}L^{-d}$, we have that $\|u_0\|_{H^s}\asymp\E\big[\|u_0\|_{H^s}^2\big]^{1/2}\asymp \al C_s^{1/2}L^{d/2}$. \\

\section{Results on the rescaled torus} \label{rescaledtorus}
\noi We recall the classical quadratic NLS equation on the torus of size one :
 \begin{equation}
   \begin{cases} \label{truenls}
     i\dt v - \Dl v =  v^2  \\
     v(0,x)=v_0(x) 
     \end{cases}
\end{equation}
where $(t,x)\in \R\times \T^d$ and $\Dl:=\sum_{i=1}^d \partial_{x_i}^2$.
Now we state a result derived directly from Theorems \ref{theorem1} and \ref{theorem2} :

\begin{corollary}\label{truetheorem}
    Let $d\ge 1$, $\delta >0$, $\gamma > \delta$, $L^{-\gamma} \le \al \le L^{-\delta}$ and the well prepared initial data
 \begin{align*}
     v_0(x)=\frac{\al L^2}{L^{d/2}}\sum_{k\in\Z^d}\sqrt{n_{in}(k/L)}g_k(\omega) e^{i2\pi (k/L)\cdot x}
 \end{align*} with $n_{in}\in \mathcal{S}(\R^d)$ non-negative, $(g_k(\omega))_{k\in \Z^d}$ family of i.i.d centered normalized complex Gaussians. Then we have that there exists $\theta>0$, $K(\theta,\gamma ),c(\theta,\gamma)>0$, $s>d/2$ and $b>1/2$ close to $1/2$ such that for all $L$ large enough, there exists an event $E_L\subset \O$ verifying  $\P(E_L^c)\le Ke^{-cL^\theta}$, such that the solution to \eqref{truenls} in distributional (or even fixed point) sense is defined up to time $T_{max}$ on $E_L$ in $h^{s,b}$ where :
 \begin{equation*}
     \begin{cases}
         T_{max} =L^{-2}\al^{-\frac{2}{1+1/d}}L^{-\frac{2\delta}{1+1/d}} \ \ \ \text{if $L^{-\frac{d+1}{4-2/d}-\delta}\le\al \le L^{-\delta}$} \\
          T_{max} =L^{-2}\al^{-1}L^{-\delta} \ \ \ \text{if $L^{-\gamma}\le  \al \le L^{-\frac{d+1}{4-2/d}-\delta} \, .$} 
     \end{cases}
 \end{equation*}
\end{corollary}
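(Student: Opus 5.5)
The plan is to deduce Corollary~\ref{truetheorem} from Theorems~\ref{theorem1} and~\ref{theorem2} by the parabolic scaling of \eqref{nls}. Let $u$ solve \eqref{nls} on $\R\times\T_L^d$ with the data of those theorems, and set
\[
v(t,x):=L^2\,u(L^2t,Lx), \qquad (t,x)\in\R\times\T^d .
\]
This $v$ is well defined on $\T^d$ because $x\mapsto Lx$ maps $\T^d=\R^d/\Z^d$ onto the torus on which $u$ lives; I will keep the paper's convention for $\T_L^d$ and check it against the exponentials $e^{i2\pi(k/L)\cdot x}$. The scaling computation is
\[
\partial_t v=L^4(\partial_t u)(L^2t,Lx), \qquad \Dl v=L^4(\Dl u)(L^2t,Lx), \qquad v^2=L^4u^2(L^2t,Lx),
\]
so $v$ solves \eqref{truenls}, and this holds in the distributional (or Duhamel) sense whenever $u$ does. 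Conversely, any solution of \eqref{truenls} rescales back to a solution of \eqref{nls}. Uniqueness in the fixed-point class therefore transfers, and the event $E_L$ together with the probability bound $Ke^{-cL^\theta}$ carry over unchanged.

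The next step is to match the initial data. We have $v_0(x)=L^2u_0(Lx)$, and $e^{i2\pi(k/L)\cdot Lx}=e^{i2\pi k\cdot x}$. This gives the stated form of $v_0$: prefactor $\alpha L^2/L^{d/2}$, frequencies $k\in\Z^d$ on $\T^d$, and amplitudes $\sqrt{n_{in}(k/L)}$. I should double-check that the exponent appearing in the corollary, written with $k/L$, is consistent after the substitution, or else read it as the rescaled frequencies.

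Times scale as $t_{\T^d}=L^{-2}t_{\T^d_L}$, so the existence time on $\T^d$ is $L^{-2}$ times the $T_{max}$ of the theorems. In the range $L^{-\frac{d(d+1)}{4d-2}-\delta}\le\alpha\le L^{-\delta}$ I use Theorem~\ref{theorem2}, which gives $L^{-2}\alpha^{-2d/(d+1)}L^{-2\delta d/(d+1)}$. Since $\frac{d(d+1)}{4d-2}=\frac{d+1}{4-2/d}$ and $\frac{2d}{d+1}=\frac{2}{1+1/d}$, this matches the stated formula. In the remaining range $L^{-\gamma}\le\alpha\le L^{-\frac{d+1}{4-2/d}-\delta}$ I use Theorem~\ref{theorem1}, which gives $L^{-2}\alpha^{-1}L^{-\delta}$.

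For the regularity claim, I will note that $h^{s,b}$ membership transfers under the scaling up to powers of $L$, which are harmless for each fixed $L$.

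The only real obstacle is bookkeeping: making the torus convention, the Fourier normalization and the time/space rescaling of the Bourgain norm consistent. No new estimate is needed.
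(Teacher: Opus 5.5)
Your proposal is correct and is the paper's argument: take $u$ from Theorem~\ref{theorem1} or~\ref{theorem2}, set $v(t,x)=L^2u(L^2t,Lx)$ so that $v$ solves \eqref{truenls}, and read off $L^{-2}$ times the corresponding $T_{max}$ in each range of $\alpha$. Your computation $e^{i2\pi(k/L)\cdot Lx}=e^{i2\pi k\cdot x}$ is also right, so the exponent in the stated $v_0$ should be read with integer frequencies $k$ on $\T^d$.
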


\noi We now prove that Theorems \ref{theorem1} and \ref{theorem2} imply Corollary \ref{truetheorem}.
\begin{proof}
    Take $u$ the solution as in Theorem \ref{theorem1} or \ref{theorem2}. Set $$v(t,x):= L^2 u(L^2t,Lx) \, .$$ Then $v$ is a solution of \eqref{truenls}.
\end{proof}
\noi Now to compare the time of existence of Corollary \ref{truetheorem} with Proposition \ref{basiclwp}, since $v(t,x)~:~=~L^2~u(L^2t,Lx)$, we get on an event of probability $\ges 1-\frac{4c_s}{C_s^2}L^{-d}$ that the time of existence given by Proposition \ref{basiclwp} on $\T^d$ is of order $L^{-2}\al^{-1}L^{-d/2}C_s^{-1/2}$ whereas on an event of probability $\ge 1-Ke^{-cL^\theta}$, the time of existence given by Corollary \ref{truetheorem} is of order $T_{max}$ hence an improvement as soon as 
$$\begin{cases}
         \al^{\frac{1-1/d}{1+1/d}}\le C_s^{1/2}L^{d/2-\frac{2\delta}{1+1/d}} \ \ \ \text{if $L^{-\frac{d+1}{4-2/d}-\delta}\le\al \le L^{-\delta}$} \\
          L^{d/2-\delta}\ge C_s^{-1/2} \ \ \ \text{if $ L^{-\gamma}\le \al \le L^{-\frac{d+1}{4-2/d}-\delta} \, .$} 
     \end{cases}$$ which is covered by our range of $\al$. \\

\noi But we can also assess the size of the initial data on $\T^d$ directly.
\noi Let the setting be the same as in Corollary \ref{truetheorem}. Then for $L$ large enough, on an event of probability $\ges 1-\frac{4c_s}{C_s^2}L^{-d}$, we have that
$$\|v_0\|_{\dot{H}^s}^2 \asymp C(s) L^{2s}L^4 \al^2 \, .$$
\begin{proof}
    We adapt the proof in Section \ref{proofoflemma} of the Appendix : \\
    If we take $v_0$ as in Corollary \ref{truetheorem} and compute $\E\big[\|v_0\|_{H^s}^2\big]$, one would get :
 \begin{align*}
     \E\big[\|v_0\|_{H^s}^2\big]&=\frac{L^4 \al^2}{L^d}\sum_{k \in \Z^d}\jb{k}^{2s}n_{in}(k/L) \ \ \ \text{and} \\
     \E\big[\|v_0\|_{\dot{H}^s}^2\big]&=\frac{L^4 \al^2}{L^d}\sum_{k \in \Z^d}|k|^{2s}n_{in}(k/L) \asymp K_s L^{2s}L^4 \al
 \end{align*}
 where $K_s:=\int_\R |x|^{2s}n_{in}(x)dx$ and we can do the same reasoning as above for the $\dot{H}^s$ norm of $v_0$. 

\end{proof}

  \noi This second estimate means that if we apply the result of Proposition \ref{basiclwp} directly on the rescaled torus $\T^d$, one would get a time of existence of order $C_s^{-1/2}L^{-s}L^{-2}\al^{-1}$ which is worse than if we apply Proposition \ref{basiclwp} directly on $\T_L^d$ without rescaling as long as $s>d/2$ . \\
  This means, that the initial condition has a peculiar property : we establish a better time of existence by scaling it on the torus $\T_L^d$, applying a classical well-posedness result and then rescaling it on $\T^d$ rather than simply applying a classical result directly.

\section{Technical lemmas}
\noi We present here some technical results used in the paper. The following lemma is used in Section \ref{proofbound1} at line \eqref{technicaluse}.
\begin{lemma}\label{technical}
    Let $\al, \beta \in \R$, $b>0$ and $n\in \R$ such that $n>b+1/2$. Then :
    \begin{equation*}
          \int_\R \frac{\jb{\tau}^{2b}}{\jb{\tau-\al}^n\jb{\tau-\beta}^n}  d\tau 
         \les\left[\max \left(\jb{\al},\jb{\beta}\right)^{2b}\jb{\al-\beta}^{-n+1}\right] \, .
    \end{equation*}
\end{lemma}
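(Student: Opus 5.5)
By the symmetry $\tau \mapsto \alpha+\beta-\tau$ (which exchanges the two denominators) and the fact that $\jb{\tau}\les \jb{\tau-\al}\jb{\al}$, we may assume without loss of generality that $|\al-\beta|$ is measured from the midpoint $m:=(\al+\beta)/2$. The plan is then to split $\R$ into the two half-lines $A:=\{\tau : |\tau-\al|\le|\tau-\beta|\}$ and $B:=\{\tau : |\tau-\beta|<|\tau-\al|\}$. On $A$ the triangle inequality gives $|\tau-\beta|\ge |\al-\beta|/2$, so $\jb{\tau-\beta}^{-n}\les \jb{\al-\beta}^{-n}$. On $B$, symmetrically, $\jb{\tau-\al}^{-n}\les\jb{\al-\beta}^{-n}$. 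It therefore suffices to bound
\begin{equation*}
\jb{\al-\beta}^{-n}\int_\R \frac{\jb{\tau}^{2b}}{\jb{\tau-\al}^{n}}\,d\tau
\end{equation*}
and the analogous integral with $\al$ replaced by $\beta$.

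For this one-point integral I would use Peetre's inequality $\jb{\tau}^{2b}\les \jb{\al}^{2b}\jb{\tau-\al}^{2b}$. This gives
\begin{equation*}
\int_\R \frac{\jb{\tau}^{2b}}{\jb{\tau-\al}^{n}}\,d\tau \les \jb{\al}^{2b}\int_\R \jb{\tau-\al}^{2b-n}\,d\tau .
\end{equation*}
The last integral is finite only if $n-2b>1$. The hypothesis $n>b+1/2$ is weaker than this, so this crude route loses too much, and the main obstacle is to avoid placing all of $\jb{\tau}^{2b}$ onto a single decaying factor.

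To get around this I would distribute the weight between the two factors instead of discarding one of them entirely. On $A$ I would write $\jb{\tau}^{2b}\les \jb{\al}^{2b}\jb{\tau-\al}^{b}\jb{\tau-\beta}^{b}$, using $\jb{\tau-\al}\le\jb{\tau-\beta}$ on $A$ to split the exponent $2b$ symmetrically. The integrand is then bounded by
\begin{equation*}
\jb{\al}^{2b}\,\jb{\tau-\al}^{b-n}\,\jb{\tau-\beta}^{b-n}.
\end{equation*}
Since $n-b>1/2$, the standard convolution estimate
\begin{equation*}
\int_\R \jb{\tau-\al}^{-p}\jb{\tau-\beta}^{-p}\,d\tau\les\jb{\al-\beta}^{-p}\quad (p>1/2, \text{ up to a logarithm when } p\le 1)
\end{equation*}
yields $\jb{\al}^{2b}\jb{\al-\beta}^{b-n}$, possibly with a harmless logarithmic loss. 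Treating $B$ in the same way gives the bound $\max(\jb{\al},\jb{\beta})^{2b}\jb{\al-\beta}^{-(n-b)+0}$.

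Finally I would reconcile this exponent with the claimed $-n+1$. When $n-b\ge n-1$, i.e. $b\le 1$, the obtained decay is at least as strong as the stated one, and this is the case used in the paper ($b$ close to $1/2$). When $b>1$, the extra decay can be recovered on $A$ by using $\jb{\tau-\beta}^{-n}\les\jb{\al-\beta}^{-n}$ away from a neighbourhood of the points, so the stated inequality follows by interpolating between the two regimes. I expect checking the exponent bookkeeping in the regime $1/2<p\le1$ of the convolution estimate to be the only delicate point.
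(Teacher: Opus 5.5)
There is a genuine gap, and it sits in the step you treated as routine: the convolution estimate is misquoted. For $1/2<p<1$ and $|\al-\beta|\ge1$ one has $\int_\R\jb{\tau-\al}^{-p}\jb{\tau-\beta}^{-p}\,d\tau\sim\jb{\al-\beta}^{1-2p}$; the tail $|\tau-\al|\ge2|\al-\beta|$ alone already gives this. That is a polynomial loss of $\jb{\al-\beta}^{1-p}$ compared with $\jb{\al-\beta}^{-p}$, not a logarithmic one. With $p=n-b$, your bound on $A$ is therefore $\jb{\al}^{2b}\jb{\al-\beta}^{1-2n+2b}$ whenever $n<b+1$. This is dominated by $\jb{\al-\beta}^{1-n}$ only if $n\ge2b$, so your reduction ``$b\le1$ suffices'' is false. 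Take $b=0.9$, $n=1.5$, $\al=R$, $\beta=-R$: your majorant integrates to $\sim R^{1.6}$, while the lemma asserts $\les R^{1.3}$. The lemma does hold there, since the integral is $\sim R^{0.3}$. At $b=1$, $n=2$ you also lose a logarithm. For $b>1$, the sentence about ``interpolating between the two regimes'' is not an argument. There the loss comes from moving $\jb{\tau-\al}^{b}$ onto $\jb{\tau-\beta}^{b}$ near $\al$, which costs $\jb{\al-\beta}^{b}$. Keeping $\jb{\tau-\beta}^{-n}\les\jb{\al-\beta}^{-n}$ on the part of $A$ near $\al$ would repair that, but not the tail problem below. (Separately, $\tau\mapsto\al+\beta-\tau$ is not a symmetry of the integral, since it does not preserve $\jb{\tau}$; this is harmless because you never use it.)

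The root cause is the multiplicative bound $\jb{\tau}^{2b}\les\jb{\al}^{2b}\jb{\tau-\al}^{2b}$, which overshoots by a factor $\jb{\al}^{2b}$ in the tail. In the same example, on the piece $\tau\ge3R$ of $A$ your majorant integrates to $\sim R^{2b}\int_{3R}^\infty\tau^{2b-2n}\,d\tau\sim R^{4b-2n+1}$. This exceeds the target $R^{2b+1-n}$ by $R^{2b-n}$. So whenever $n<2b$ (compatible with $n>b+1/2$ as soon as $b>1/2$), no redistribution of exponents after this step can close the estimate. The missing idea is the additive bound $\jb{\tau}^{2b}\les\jb{\al}^{2b}+\jb{\tau-\al}^{2b}$ on $A$, together with $\jb{\al-\beta}\les M:=\max(\jb{\al},\jb{\beta})$. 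The $\jb{\al}^{2b}$ part is handled by the correct convolution estimate with $p=n>1/2$. That gives at most $\jb{\al-\beta}^{\max(-n,1-2n)}$ (times a logarithm if $n=1$), which is $\les\jb{\al-\beta}^{1-n}$. For the $\jb{\tau-\al}^{2b}$ part, use $\jb{\tau-\beta}\ges\jb{\al-\beta}$ on $A\cap\{|\tau-\al|\le2|\al-\beta|\}$ and $\jb{\tau-\beta}\ges\jb{\tau-\al}$ beyond it. Since $2n-2b>1$, this gives $\les\jb{\al-\beta}^{2b-2n+1}+\jb{\al-\beta}^{-n}\log(2+|\al-\beta|)$. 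Using $\jb{\al-\beta}^{2b}\les M^{2b}$ on the first term, both are $\les M^{2b}\jb{\al-\beta}^{1-n}$. This is what the paper's identity $\tau=\frac{-\beta}{\al-\beta}(\tau-\al)+\frac{\al}{\al-\beta}(\tau-\beta)$ buys: for $|\al-\beta|\ge1$ one has $\jb{\beta/(\al-\beta)},\jb{\al/(\al-\beta)}\les M/\jb{\al-\beta}$. Hence $\jb{\tau-\al}^{2b}$ comes with a gain $\jb{\al-\beta}^{-2b}$ instead of your factor $\jb{\al}^{2b}$. (The paper's write-up is itself loose at one point: it integrates $\jb{\tau}^{2b-n}$ over all of $\R$, which needs $n>2b+1$. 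The mechanism is nonetheless the right one.) Your argument as written does cover the case actually used in the paper, $n=10$ with $b$ slightly above $1/2$, where $p>1$ and $b<1$. The gap concerns the lemma in the generality stated.
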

\begin{proof} 
\begin{itemize}
    \item If $|\al-\beta|\ge 1$. We write
\begin{equation*}
    \tau =\frac{-\beta}{\al-\beta}(\tau-\al)+\frac{\al}{\al-\beta}(\tau-\beta) \, .
\end{equation*}
By triangular inequality, we have :
    \begin{align*}
        \int_\R \frac{\jb{\tau}^{2b}}{\jb{\tau-\al}^n\jb{\tau-\beta}^n}  d\tau & \le \jb{\frac{\beta}{\al-\beta}}^{2b}\int_\R \frac{\jb{\tau-\al}^{2b}}{\jb{\tau-\al}^n\jb{\tau-\beta}^n}  d\tau + \jb{\frac{\al}{\al-\beta}}^{2b}\int_\R \frac{\jb{\tau-\beta}^{2b}}{\jb{\tau-\al}^n\jb{\tau-\beta}^n}  d\tau
    \end{align*}
    and using $|\al-\beta|\ge 1$, we have :
    \begin{align*}
        \jb{\frac{\beta}{\al-\beta}}^{2b}\int_\R \frac{\jb{\tau-\al}^{2b}}{\jb{\tau-\al}^n\jb{\tau-\beta}^n}  d\tau & \les \frac{\jb{\beta}^{2b}}{\jb{\al-\beta}^{2b}}\int_\R \frac{\jb{\tau}^{2b}}{\jb{\tau}^n\jb{\tau+\al-\beta}^n}  d\tau \, .
        \end{align*}
        We split the integral in two and get it is smaller than :
        \begin{align*}
        & \les \frac{\jb{\beta}^{2b}}{\jb{\al-\beta}^{2b}}\Big(\int_{|\tau+\al-\beta|\le |\al-\beta|} \frac{\jb{\tau}^{2b}}{\jb{\tau}^n\jb{\tau+\al-\beta}^n}  d\tau+\int_{|\tau+\al-\beta|\ge |\al-\beta|} \frac{\jb{\tau}^{2b}}{\jb{\tau}^n\jb{\tau+\al-\beta}^n}  d\tau\Big) \\
        & \les \frac{\jb{\beta}^{2b}}{\jb{\al-\beta}^{2b}}\Big(\int_{|\tau+\al-\beta|\le |\al-\beta|} \frac{\jb{\tau}^{2b}}{\jb{\tau}^n}  d\tau+\int_{\R} \frac{\jb{\tau}^{2b}}{\jb{\tau}^n\jb{\al-\beta}^n}  d\tau\Big) \\
        & \les \frac{\jb{\beta}^{2b}}{\jb{\al-\beta}^{2b}} (\jb{\al-\beta}^{2b-n+1}+\jb{\al-\beta}^{-n}) \\
        & \les \frac{\jb{\beta}^{2b}}{\jb{\al-\beta}^{n-1}} \, .
    \end{align*}
    We use the same reasoning for 
    $$\jb{\frac{\al}{\al-\beta}}^{2b}\int_\R \frac{\jb{\tau-\beta}^{2b}}{\jb{\tau-\al}^n\jb{\tau-\beta}^n}  d\tau$$
    and obtain :
    \begin{align*}
        \jb{\frac{\al}{\al-\beta}}^{2b}\int_\R \frac{\jb{\tau-\beta}^{2b}}{\jb{\tau-\al}^n\jb{\tau-\beta}^n}  d\tau \les \frac{\jb{\al}^{2b}}{\jb{\al-\beta}^{n-1}} \, .
    \end{align*}
    Hence our result.
    \item If $|\al-\beta|\le 1$. We have :
    \begin{equation*}
        1+|\tau-\al| \le 2 +|\tau-\beta| \le 3 + |\tau-\al|
    \end{equation*}
    hence there exists $C>0$ such that
    \begin{equation*}
        \jb{\tau-\al}\sim C\jb{\tau-\beta} \, .
    \end{equation*}
    This yields
    \begin{align*}
        \int_\R \frac{\jb{\tau}^{2b}}{\jb{\tau-\al}^n\jb{\tau-\beta}^n}  d\tau & \les \int_\R \frac{\jb{\tau}^{2b}}{\jb{\tau-\al}^{2n}}d\tau \\
        & \les \int_\R \frac{\jb{\tau}^{2b}}{\jb{\tau}^{2n}}d\tau + \int_\R \frac{\jb{\al}^{2b}}{\jb{\tau}^{2n}}d\tau \, .
    \end{align*}
    Using the conditions $n>b+1/2$, we get :
    \begin{align*}
        \int_\R \frac{\jb{\tau}^{2b}}{\jb{\tau-\al}^n\jb{\tau-\beta}^n}  d\tau & \les 1+\jb{\al}^{2b} \, .
    \end{align*}
    Hence the result as $\jb{\al-\beta}^{-n}\ges 1$.
\end{itemize}

\end{proof}

\section{Classical result of local well-posedness}
We present here the proof of Proposition \ref{basiclwp}. Our goal is to get a time of existence as large as possible. \\
More specifically, we will work with initial conditions as in Corollary \ref{truetheorem}. A computation similar to the one in Section \ref{proofoflemma} of the appendix yields :
\begin{align*}
    \|v_0\|_{H^s}^2\asymp C_s \al^2 L^4
\end{align*}
meaning the size of $v_0$ depends on the regime of $\al$. However for the most interesting case that is $\al$ not too small, the initial data is big.
\subsection{Classical proof}\label{classical proof}

 \label{basicprooflwp} First we recall the classical approach. \\
    \noi We use the fact that $H^s(\T_L^d)$ is an algebra for $s>d/2$. This means that for $u$ solution to \eqref{truenls}, we have formally :
    \begin{align*}
        \|\dt(e^{it\Dl}u)\|_{H^s(\T_L^d)} =\| e^{it\Dl}(u)^2\|_{H^s(\T_L^d)} \les  \|u\|_{H^s(\T_L^d)}^2 \, .
    \end{align*}
    More rigorously, we can run a fixed point argument on the Duhamel formulation in $C_t^0 ( H_x^s(\T_L^d))$.
    This shows that the equation is locally well-posed in $H^s(\T_L^d)$ with a time of existence of at least $T \sim \|u_0\|_{H^s(\T_L^d)}^{-1}$. 
  
    \subsection{Bourgain spaces technique}
   
    \noi We state the Proposition in $\T^d$ for the sake of simplicity, as nothing is modified if we look at the same well-posedness problem on $\T_L^d$.\\
    
    \noi In order to improve the regularity threshold given by Subsection \ref{classical proof}, we use the $X^{s,b}$ spaces as introduced by Bourgain in \cite{Bourgain}. From now on, the Fourier transform in space denotes the usual Fourier transform on the torus of size $1$. We set :
    \begin{align*}
        \| u\|_{X^{s,b}}^2&:=\sum_{n \in \Z}\int_\R \jb{n}^{2s} \jb{\tau-|n|^2}^{2b} |\Ft_{t,x}(u)(\tau,n)|^2 d\tau \\
        \| u\|_{X_T^{s,b}}&:=\inf\{\| v\|_{X^{s,b}}, \ v_{|]0,T[}= u_{|]0,T[} \ \}
    \end{align*}
    and use the same techniques as in \cite{TzvetkovLWP} or \cite{Ginibre}, for instance, to show that if $\psi$ is a $\mathcal{C}_c^\infty$ function with support in $[0,2]$ and such that $\psi \equiv 1$ on $[0,1]$ and $\psi_T:=\psi(\frac{.}{T})$ then
    \begin{align*}
        \Phi : u\in X_T^{s,b} \mapsto \psi_T(t)e^{-it\Dl}u_0 + \psi_T(t)\int_0^t e^{-i(t-s)\Dl}u^2(s)ds \in X_T^{s,b}
    \end{align*}
    is well defined and a contraction for suitable $s$ and $b>1/2$ we determine later. 
    \subsubsection{Key Propositions}
    The two key inequalities needed are : 
    \begin{itemize}
    \item the integral in time estimate :
    \begin{proposition} \label{duhamel} For $T\ge 1$, for any $0\le b'<1/2$, $b\ge b'$ such that $b+b' \le 1$, 
\begin{equation*}
    \Big\|\psi_T(t)\int_0^t f(s)ds\Big\|_{H^b}  \le C T^{1}\|f\|_{H^{-b'}} \, .
\end{equation*}
    \end{proposition}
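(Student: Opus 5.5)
The plan is the classical argument of \cite{Ginibre}, done on the Fourier side in time. For $f$ in $\mathcal{S}(\R)$ (and then by density), I start from the identity used in the proof of Lemma \ref{FT},
\begin{equation*}
\psi_T(t)\int_0^t f(s)ds=\psi_T(t)\int_\R \frac{e^{it\tau}-1}{i\tau}\Ft(f)(\tau)d\tau ,
\end{equation*}
and split the frequency integral into the region $|\tau|T\le 1$ and the region $|\tau|T>1$. Two elementary facts will be used throughout. First, for $T\ge1$ one has $\|\psi_T\|_{H^b}\les T^{1/2}$, by the change of variables $\sigma=T\tau$ and the bound $\jb{\sigma/T}\le\jb{\sigma}$. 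Second, multiplication by $\psi_T$ is bounded on $H^b$ uniformly in $T\ge1$.

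\textbf{Low frequencies, $|\tau|\le 1/T$.} Here I expand $\frac{e^{it\tau}-1}{i\tau}=\sum_{k\ge1}\frac{i^{k-1}t^k\tau^{k-1}}{k!}$, which produces the terms
\begin{equation*}
\frac{i^{k-1}}{k!}\,\psi_T(t)\,t^k\int_{|\tau|\le 1/T}\tau^{k-1}\Ft(f)(\tau)d\tau .
\end{equation*}
Scaling as in the first fact gives $\|\psi_T(t)t^k\|_{H^b}\le C^kT^{k+1/2}$. By Cauchy--Schwarz, the coefficient is bounded by
\begin{equation*}
\|f\|_{H^{-b'}}\Big(\int_{|\tau|\le1/T}|\tau|^{2k-2}\jb{\tau}^{2b'}d\tau\Big)^{1/2}\les \|f\|_{H^{-b'}}\,T^{-k+1/2}.
\end{equation*}
Each term is therefore $\les C^kT\|f\|_{H^{-b'}}/k!$, and summing over $k\ge1$ gives $O(T)\|f\|_{H^{-b'}}$.

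\textbf{High frequencies, $|\tau|>1/T$.} I write this part as $\psi_T(t)g(t)-c\,\psi_T(t)$, where $\Ft(g)=\ind_{|\tau|>1/T}\Ft(f)/(i\tau)$ and $c=\int_{|\tau|>1/T}\Ft(f)(\tau)/(i\tau)\,d\tau$.
\begin{itemize}
\item For the first piece, $\|\psi_Tg\|_{H^b}\les\|g\|_{H^b}$ by the second fact. Since $b+b'\le1$, on $|\tau|>1/T$ we have $\jb{\tau}^{b+b'}/|\tau|\le\jb{\tau}/|\tau|\les T$. Hence $\|g\|_{H^b}\les T\|f\|_{H^{-b'}}$.
\item For the second piece, Cauchy--Schwarz gives $|c|\le\|f\|_{H^{-b'}}\big(\int_{|\tau|>1/T}\jb{\tau}^{2b'}\tau^{-2}d\tau\big)^{1/2}$. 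This integral converges at infinity precisely because $b'<1/2$, and it is $\les T$ near $|\tau|=1/T$, so $|c|\les T^{1/2}\|f\|_{H^{-b'}}$. Combined with $\|\psi_T\|_{H^b}\les T^{1/2}$, this piece is $\les T\|f\|_{H^{-b'}}$.
\end{itemize}

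The step I expect to need the most care is the uniform-in-$T$ multiplier bound $\|\psi_Tg\|_{H^b}\les\|g\|_{H^b}$. I would prove it at $b=0$, where it follows from $\|\psi\|_{L^\infty}$, and at $b=1$, where $\|\psi_T'\|_{L^\infty}=\|\psi'\|_{L^\infty}/T\le\|\psi'\|_{L^\infty}$ for $T\ge1$, and then interpolate, which is allowed since $0\le b\le 1$ (this follows from $b\ge b'\ge0$ and $b+b'\le1$). The rest of the argument is bookkeeping of the powers of $T$, checking that each of the three contributions is exactly of order $T$.
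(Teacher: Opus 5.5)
Your proof is correct and is essentially the paper's argument: you use the same three-piece decomposition of $\int_{\mathbb{R}}\frac{e^{it\tau}-1}{i\tau}\,\widehat{f}(\tau)\,d\tau$ that the paper calls $I$, $II$, $III$, namely a Taylor-expanded low-frequency part and the two high-frequency pieces $-c\,\psi_T$ and $\psi_T g$, with the cutoff placed directly at $|\tau|=1/T$ (this is the choice $w=1$ the paper reaches after optimizing over $w$). The only differences are minor: you obtain the uniform-in-$T$ bound for multiplication by $\psi_T$ on $H^b$ by interpolating between $L^2$ and $H^1$, where the paper uses $L^1$ bounds on the Fourier transform of $\psi_T$; and your tracking of the region $|\tau|\sim 1/T$ (where $\langle\tau\rangle\approx 1$, giving the factors $T^{1/2}$ and $T$) is more accurate than the paper's intermediate exponents, although both arguments end with a bound of order $T$.
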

    \item and the multilinear estimate :
    \begin{proposition} \label{multilinBourgain estimate}
        For any $0<b'<1/2<b$ such that $b+b' <1$, for $s>\frac{d-2}{2}$
        \begin{equation*}
            \|u_1 u_2\|_{X^{s,-b'}}\le C\|u_1\|_{X^{s,b}}\|u_2\|_{X^{s,b}} \, .
        \end{equation*}
    \end{proposition}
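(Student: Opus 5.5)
The plan is to argue by duality and a dyadic Littlewood--Paley decomposition, with Bourgain's periodic $L^4$ Strichartz estimate on $\T^d$ as the only substantial input. Since $X^{-s,b'}$ is the dual of $X^{s,-b'}$ for the $L^2_{t,x}$ pairing, it suffices to prove
\begin{equation*}
\Big|\int_{\R\times\T^d} u_1u_2\,\cj{w}\,dx\,dt\Big|\le C\|u_1\|_{X^{s,b}}\|u_2\|_{X^{s,b}}\|w\|_{X^{-s,b'}} .
\end{equation*}
We decompose $u_1=\sum_{N_1}P_{N_1}u_1$, $u_2=\sum_{N_2}P_{N_2}u_2$ and $w=\sum_N P_Nw$ over dyadic $N_1,N_2,N\ge1$. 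Convolution in the spatial frequency forces the two largest of $N_1,N_2,N$ to be comparable, and by symmetry we may assume $N_1\ge N_2$.

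The linear input is the $L^4$ estimate on the (rational) torus,
\begin{equation*}
\|P_N e^{it\Dl}f\|_{L^4_{t,x}([0,1]\times\T^d)}\les_\eps N^{\frac{d-2}{4}+\eps}\|f\|_{L^2}.
\end{equation*}
For $d\ge 3$ it follows from the $L^{\frac{2(d+2)}{d}}$ Strichartz bound with $N^\eps$ loss, together with Sobolev/Bernstein in $x$. For $d=1,2$ it follows from the counting of lattice points on circles, via the divisor bound. The transference principle then gives $\|P_Nu\|_{L^4}\les N^{\frac{d-2}{4}+\eps}\|P_Nu\|_{X^{0,b}}$ for $b>1/2$. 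Interpolating with the trivial identity $\|u\|_{L^2}=\|u\|_{X^{0,0}}$ gives a version for $X^{0,b'}$ with $b'<1/2$ close to $1/2$, at the cost of a slightly larger exponent $\frac{d-2}{4}+\eps'$. Here $\eps'\to0$ as $b'\to1/2$, which is why the statement takes $b'$ close to $1/2$ (while $b+b'<1$).

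We then treat the two dyadic regimes separately.
\begin{itemize}
\item \textbf{High--low, $N\sim N_1\ge N_2$.} By H\"older, the block is bounded by $\|P_{N_1}u_1\|_{L^4}\|P_Nw\|_{L^4}\|P_{N_2}u_2\|_{L^2}$. Applying the $L^4$ bounds to the two high pieces and $L^2$ to the low one costs $N_1^{\frac{d-2}{2}+2\eps'}$. This factor is absorbed by $N_1^{s}N^{-s}\sim 1$ together with $N_2^{-s}$, since $s>\frac{d-2}{2}$. It is cleaner to place the low-frequency factor in $L^4$ instead, spending a factor $N_2^{\frac{d-2}{4}}$ against $N_2^{-s}$.
\item \textbf{High--high, $N_1\sim N_2\gg N$.} The output gains $N^{s}\le N_1^{s}$ and only the spatial localisation constrains the sum. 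We decompose $P_{N_1}u_1$ and $P_{N_2}u_2$ into cubes of side $N$. By convolution, each cube $Q$ of $u_1$ interacts with $O(1)$ cubes of $u_2$. On each such pair we apply the $L^4$ estimate at scale $N$ (Galilean invariance transfers the estimate to cubes of side $N$ not centred at $0$). We then sum over cubes with Cauchy--Schwarz, using almost orthogonality. This gives a bound $N^{\frac{d-2}{2}+\eps}N^{s}N_1^{-2s}$, which is summable because $s>\frac{d-2}{2}\ge0$. When $s<0$, which is possible for $d=1$, one instead gains directly from the factor $N^{s}$ with $N\ll N_1$.
\end{itemize}

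Finally, the dyadic sums are controlled by Schur's test or Cauchy--Schwarz in the comparable pair of frequencies. The strict margin $s-\frac{d-2}{2}>0$ absorbs every $\eps$-loss and makes the off-diagonal dyadic sums geometric. I expect the main obstacle to be the high--high interaction at the endpoint-type exponents: the cube decomposition together with the $\eps$-lossy $L^4$ bound for the weaker $X^{0,b'}$ norm of $w$ must still leave a positive power to spare. If this becomes tight, the fallback is to use the resonance identity instead. The three modulations $\tau-|n|^2$, $\tau_1-|n_1|^2$ and $\tau_2-|n_2|^2$ sum to $\pm2n_1\cdot n_2$, so the largest modulation is $\gtrsim|n_1\cdot n_2|$. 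We would use this to gain a power of the modulation in the regions where $|n_1\cdot n_2|$ is large, and handle the near-orthogonal region $|n_1\cdot n_2|\les1$ by direct lattice counting.
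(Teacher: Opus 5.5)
\textbf{The gap is in your high--low case.} That is the case that produces the threshold $s>\frac{d-2}{2}$, and your argument does not close there. Take the pairing $\int P_{N_1}u_1\,P_{N_2}u_2\,\cj{P_Nw}$ with $N\sim N_1\gg N_2$. The weights from $X^{s,b}\times X^{s,b}\times X^{-s,b'}$ give $N_1^{-s}N_2^{-s}N^{s}\sim N_2^{-s}$, so nothing compensates a positive power of $N_1$. Both of your H\"older arrangements put the linear $L^4$ loss on a high-frequency factor:
\begin{itemize}
\item with the two high pieces in $L^4$, the block costs $N_1^{\frac{d-2}{2}+2\eps'}N_2^{-s}$;
\item with $P_{N_2}u_2$ in $L^4$, one high piece is still in $L^4$, and the block costs $N_1^{\frac{d-2}{4}+\eps}N_2^{\frac{d-2}{4}+\eps-s}$.
\end{itemize}
Neither bound is uniform in $N_1$. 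For $d\ge3$ the loss is polynomial; for $d=2$ it comes from the $\eps$-loss. So your claim that this factor is ``absorbed by $N_1^sN^{-s}\sim1$ together with $N_2^{-s}$'' is false.

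What is missing is a bilinear estimate whose loss involves only the smaller frequency:
\begin{equation*}
\|P_{N_1}u\,P_{N_2}v\|_{L^2_{t,x}}\les_\eps \min(N_1,N_2)^{\frac{d-2}{2}+\eps}\|u\|_{X^{0,b}}\|v\|_{X^{0,b}} .
\end{equation*}
This is exactly the input of the paper's proof: Lemma~\ref{lemmaBourgain}, transferred to $X^{0,b}$ by Lemma~\ref{lemmaL4toBourgain}, then the dyadic argument of \cite{TzvetkovLWP}. Your high--high case is fine for $d\ge2$, because there $w$ sits in $L^2$ and the cubes localise the loss at the output scale.

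\textbf{How to repair it with your own tool.} Run the cube decomposition in the high--low case, at the scale of the low frequency.
\begin{itemize}
\item Split $P_{N_1}u_1$ and $P_Nw$ into cubes of side $N_2$; each cube $Q$ of $u_1$ meets $O(1)$ cubes $Q'$ of $w$.
\item Bound each piece by $\|P_Qu_1\|_{L^4}\|P_{N_2}u_2\|_{L^4}\|P_{Q'}w\|_{L^2}$, using the $L^4$ bound at scale $N_2$ (Galilean invariance for $Q$).
\item Sum over $Q$ by Cauchy--Schwarz.
\end{itemize}
This gives $N_2^{\frac{d-2}{2}+2\eps-s}$, which is summable; it is precisely the bilinear estimate above. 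Since $w$ then only appears in $L^2$, you need neither an $L^4$ bound in $X^{0,b'}$ nor $b'$ close to $1/2$. (Interpolating with $\|u\|_{L^2}=\|u\|_{X^{0,0}}$ alone would only give $L^q$ with $q<4$ anyway.) For $d\ge2$ one even gets $\|u_1u_2\|_{X^{s,0}}$ on the left.

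\textbf{Your $d=1$ discussion also fails.}
\begin{itemize}
\item The bound $\|P_Ne^{it\Dl}f\|_{L^4}\les N^{-\frac14+\eps}\|f\|_{L^2}$ is false on $\T$: take $f=e^{2i\pi Nx}$, for which $|e^{it\Dl}f|\equiv1$.
\item For $s<0$, the factor $N^sN_1^{-2s}\ge N_1^{|s|}$ is a loss, not a gain.
\item The only available gain is the modulation $|2n_1n_2|$, which you treat as a fallback. The two-mode example $n_1=N$, $n_2=1-N$, both on their paraboloids, gives $\|u_1u_2\|_{X^{s,-b'}}\sim N^{-2b'}$ against $N^{2s}$ on the right. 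So in $d=1$ the estimate forces $s\ge-b'$, and is false as stated for small $b'$.
\end{itemize}
The paper's Lemma~\ref{lemmaBourgain}, which claims a factor $\min(N,L)^{-\frac12+\eps}$ when $d=1$, fails on the same single-mode example. So $d=1$ needs a separate modulation argument in either approach.
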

    \end{itemize}
    \noi In order to prove the later Proposition, one needs the key bilinear estimate first proved in~~\cite{Bourgain}~~:
    \begin{lemma} \label{lemmaBourgain}
        For all $\eps >0$, there exists $C_\eps >0$ such that for every $L,N$ dyadic vectors in $\R^d$, if
        \begin{align*}
            u_0(x)&=\sum_{N\le \|n\|_\infty \le 2N}c_n e^{2i \pi n\cdot x} \\
            v_0(x)&=\sum_{L\le \|n\|_\infty \le 2L}d_n e^{2i \pi n\cdot x}
        \end{align*}
        then
        \begin{equation*}
            \|e^{it\Dl}u_0 e^{it\Dl}v_0\|_{L^2([0,1]_t\times \T^d)} \le C_\eps \min(N,L)^{\frac{d-2}{2}+\eps}\|u_0\|_{L^2(\T^d)}\|v_0\|_{L^2(\T^d)} \, .
        \end{equation*}
    \end{lemma}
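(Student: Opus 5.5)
The plan is the standard $TT^*$/orthogonality argument on dyadic pieces, done in the periodic setting with Bourgain's lattice counting in place of the Euclidean Strichartz estimate.

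\textbf{Reduction.} By symmetry assume $L\le N$, so the bound to prove carries the factor $L^{\frac{d-2}{2}+\eps}$. Partition the frequency cube $\{N\le\|n\|_\infty\le 2N\}$ into cubes $Q$ of side $L$, and write $u_0=\sum_Q P_Q u_0$. The product $e^{it\Dl}P_Qu_0\, e^{it\Dl}v_0$ has spatial Fourier support in $Q+\{\|m\|_\infty\le 2L\}$. These enlarged sets overlap only boundedly often. Hence, by almost orthogonality in $L^2_x$ at each fixed $t$,
\[
\|e^{it\Dl}u_0\,e^{it\Dl}v_0\|_{L^2}^2\les\sum_Q\|e^{it\Dl}P_Qu_0\,e^{it\Dl}v_0\|_{L^2}^2 .
\]
Since $\sum_Q\|P_Qu_0\|_{L^2}^2=\|u_0\|_{L^2}^2$, it suffices to prove the estimate when $u_0$ is supported in a single cube $Q$ of side $L$ and $v_0$ in a cube of side $\sim L$.

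\textbf{Counting.} Expand the product in Fourier series in $x$. For each output frequency $m$, apply Cauchy--Schwarz in time after smoothing $\ind_{[0,1]}$ by a Schwartz cutoff $\psi(t)$. Its Fourier transform in $t$ localizes $|n_1|^2+|n_2|^2$ to a unit window $\tau+O(1)$. This gives
\[
\|\psi(t)\,e^{it\Dl}u_0\,e^{it\Dl}v_0\|_{L^2_{t,x}}^2\les\sup_{m,\tau}\#S(m,\tau)\;\|u_0\|_{L^2}^2\|v_0\|_{L^2}^2 ,
\]
where $S(m,\tau)=\{(n_1,n_2): n_1\in Q,\ n_2\in Q',\ n_1+n_2=m,\ |n_1|^2+|n_2|^2\in[\tau,\tau+1]\}$. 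After substituting $n_2=m-n_1$, the constraint becomes $|n_1-m/2|^2\in$ an interval of length $O(1)$, with $n_1$ ranging over a cube of side $L$.

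\textbf{Lattice point bound (main obstacle).} It remains to count lattice points of the cube of side $L$ on $O(1)$ spheres $|n-m/2|^2=R$, each with integer $R$ (up to factors of $4$). Fix $d-2$ coordinates, which costs at most $L^{d-2}$ choices. The remaining two coordinates $(a,b)$ must satisfy $a^2+b^2=R'$ with $a,b$ lying in intervals of length $L$. Two sub-cases arise.
\begin{itemize}
\item If $R'\le L^{C}$, the divisor bound in $\Z[i]$ gives $\les_\eps L^{\eps}$ solutions.
\item If $R'$ is huge, use Lemma~\ref{countinglemmahard} with $R=\Z[i]$, applied to $(a+ib)(a-ib)=R'$ with $a\pm ib$ confined to discs of radius $\sim L$ around fixed centers. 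This gives $\les_\nu L^{2\nu}$ solutions, uniformly in the center.
\end{itemize}
The uniformity of this count with respect to the position of the cube, i.e.\ with respect to $N$ and $m$, is the genuinely delicate point. I expect this to be the main obstacle, and Lemma~\ref{countinglemmahard} is exactly what resolves it.

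\textbf{Conclusion.} Altogether $\#S\les_\eps L^{d-2+\eps}$. Taking square roots yields the claimed constant $C_\eps\min(N,L)^{\frac{d-2}{2}+\eps}$.
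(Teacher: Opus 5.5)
The paper does not prove this lemma; it quotes it from Bourgain \cite{Bourgain}, so the comparison here is with the standard argument. Your proof is correct for $d\ge 2$, but it handles the step you call the main obstacle in a different way.

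The usual proof keeps your decomposition into cubes $Q$ of side $L$ and then applies H\"older, $\|e^{it\Dl}P_Qu_0\,e^{it\Dl}v_0\|_{L^2}\le\|e^{it\Dl}P_Qu_0\|_{L^4}\|e^{it\Dl}v_0\|_{L^4}$. It combines this with the $L^4$ Strichartz estimate $\|e^{it\Dl}P_Cf\|_{L^4([0,1]\times\T^d)}\les_\eps L^{\frac{d-2}{4}+\eps}\|f\|_{L^2}$ for any frequency cube $C$ of side $L$.

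In that route, uniformity in the position of $C$ comes for free from Galilean invariance. In the $L^4$ count both frequencies lie in $C$, so the shift $n_j\mapsto n_j-n_0$ changes $|n_1|^2+|n_2|^2$ only by a quantity depending on the fixed sum $n_1+n_2$. After recentring, only spheres of radius $\les L$ occur, and the classical bound $\#\{a^2+b^2=R\}\les_\eps R^\eps$ is enough.

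Your count is genuinely bilinear, and no such recentring is possible because only one of the two frequencies is far out. Indeed $n_1-n_2=2n_1-m$ ranges over a cube of side $\sim L$ at distance $\sim N$ from the origin. You are therefore forced to count lattice points on circles of radius up to $\sim N$ inside discs of radius $\sim L$, uniformly in $N$. That is exactly the deeper arithmetic input of Lemma~\ref{countinglemmahard}. So your route is a single direct computation with no H\"older step, while the standard one needs only elementary arithmetic.

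Two small remarks.

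First, in your argument the almost-orthogonality step is superfluous. Since $\|n_2\|_\infty\le 2L$ and $n_1=m-n_2$, the set $S(m,\tau)$ already confines $n_1$ to a cube of side $4L$. Likewise, Lemma~\ref{countinglemmahard} covers every $R'\neq 0$, so your first sub-case is not needed.

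Second, ``fix $d-2$ coordinates'' tacitly requires $d\ge 2$, and you should say so. For $d=1$ the statement as written is false: $u_0=v_0=e^{2i\pi Nx}$ give a left-hand side equal to $1$, while the right-hand side is $C_\eps N^{-1/2+\eps}\to 0$.
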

    \noi and the following Lemma which allows to go from a local in time bilinear estimate to a Bourgain space estimate :
    \begin{lemma}\label{lemmaL4toBourgain}
        Let $b>1/2$, $\eps >0$, there exists $C_\eps >0$ such that for every $L,N$ dyadic vectors in $\R^d$, if $u,v \in X^{0,b}$ such that
        \begin{align*}
            u(t,x)&=\sum_{N\le \|n\|_\infty \le 2N}c_n(t) e^{2i \pi n\cdot x} \\
            v(t,x)&=\sum_{L\le \|n\|_\infty \le 2L}d_n(t) e^{2i \pi n\cdot x}
        \end{align*}
        then
        \begin{equation*}
            \|u v\|_{L^2(\R_t\times \T^d)} \le C_\eps \min(N,L)^{\frac{d-2}{2}+\eps}\|u\|_{X^{0,b}}\|v\|_{X^{0,b}} \, .
        \end{equation*}
    \end{lemma}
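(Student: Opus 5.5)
This is the standard transference step from a free-solution bilinear estimate to $X^{s,b}$ functions. The plan is to write $u$ and $v$ as superpositions of modulated free solutions and apply Lemma~\ref{lemmaBourgain} inside the superposition.

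\textbf{Step 1: representation.} Set $\tilde u(\lambda,n):=\Ft_{t}(c_n)(\lambda+|n|^2)$, with the sign convention matching the propagator $e^{it\Dl}$. By Fourier inversion in time,
\begin{equation*}
u(t,x)=\int_\R e^{2i\pi t\lambda}\, \bigl(e^{it\Dl}u_\lambda\bigr)(x)\,d\lambda,\qquad u_\lambda(x):=\sum_{N\le\|n\|_\infty\le 2N}\tilde u(\lambda,n)e^{2i\pi n\cdot x}.
\end{equation*}
With this choice $\|u\|_{X^{0,b}}^2=\int_\R\jb{\lambda}^{2b}\|u_\lambda\|_{L^2}^2\,d\lambda$. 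The same construction applied to $v$ gives $v_\mu$, localized at frequencies $L\le\|n\|_\infty\le 2L$.

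\textbf{Step 2: localize in time and apply the lemma.} Cover $\R_t$ by the unit intervals $[j,j+1]$, $j\in\Z$. On $[j,j+1]$ we have $e^{it\Dl}u_\lambda=e^{i(t-j)\Dl}\bigl(e^{ij\Dl}u_\lambda\bigr)$, and $e^{ij\Dl}u_\lambda$ has the same frequency support and the same $L^2$ norm as $u_\lambda$. Lemma~\ref{lemmaBourgain} applied after translating time therefore gives, for each $j$, $\lambda$, $\mu$,
\begin{equation*}
\|e^{it\Dl}u_\lambda\, e^{it\Dl}v_\mu\|_{L^2([j,j+1]\times\T^d)}\le C_\eps\min(N,L)^{\frac{d-2}{2}+\eps}\|u_\lambda\|_{L^2}\|v_\mu\|_{L^2}.
\end{equation*}
The phases $e^{2i\pi t(\lambda+\mu)}$ have modulus one, so Minkowski's inequality in $(\lambda,\mu)$ yields
\begin{equation*}
\|uv\|_{L^2([j,j+1]\times\T^d)}\le C_\eps\min(N,L)^{\frac{d-2}{2}+\eps}\Big(\int\|u_\lambda\|_{L^2}\,d\lambda\Big)\Big(\int\|v_\mu\|_{L^2}\,d\mu\Big).
\end{equation*}
Cauchy--Schwarz with weight $\jb{\lambda}^{-2b}$, which is integrable because $b>1/2$, bounds each factor by $C_b$ times the corresponding $X^{0,b}$ norm.

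\textbf{Step 3: summing over $j$ (main obstacle).} Step~2 only gives a bound uniform in $j$, and the sum over infinitely many intervals would diverge. To fix this, replace $u$ by $\phi_j(t)u$ with a smooth partition of unity $\sum_j\phi_j^2=1$, $\phi_j$ supported near $[j,j+1]$, and run Step~2 on $\phi_ju\cdot\phi_jv$. This requires an almost-orthogonality estimate of the form $\sum_j\|\phi_j u\|_{X^{0,b}}^2\les\|u\|_{X^{0,b}}^2$. It is not true in full generality, but the usual remedy gives an equivalent bound. Alternatively, in the setting where the estimate is used (Proposition~\ref{multilinBourgain estimate} and the fixed point on $X_T^{s,b}$ with a cutoff $\psi_T$), the functions are supported in a bounded time interval of length $O(T)$, and the constant depends only on $T$. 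I expect this passage from a local-in-time bound to the global $L^2(\R_t)$ bound to be the delicate point. If the partition argument fails, I will state and use the lemma for time-localized functions $\psi_T u,\psi_T v$, which is all the contraction argument needs.
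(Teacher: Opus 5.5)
Your Steps 1 and 2 are correct and match what the paper does on a unit time interval: conjugate by the propagator, Fourier-invert in $t$, apply Minkowski in the two modulation variables, use Lemma~\ref{lemmaBourgain}, then Cauchy--Schwarz with $b>1/2$. The gap is Step~3. You never establish the bound on $L^2(\R_t\times\T^d)$, which is the actual content of the statement, and your stated reason for not carrying it out is mistaken. The almost-orthogonality estimate $\sum_j\|\phi_j u\|_{X^{0,b}}^2\les\|u\|_{X^{0,b}}^2$ does hold for $\phi_j=\phi(\cdot-j)$ with $\phi\in C_c^\infty(\R)$. Multiplication by $\phi_j(t)$ commutes with the propagator, so the estimate reduces to $\sum_j\|\phi_jF\|_{H^b_t}^2\les\|F\|_{H^b_t}^2$. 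This is immediate for $b=0$ and for $b=1$ from bounded overlap and the Leibniz rule, and it follows for $0\le b\le1$ by interpolation. Your fallback does not rescue the statement either. For $\psi_Tu,\psi_Tv$, summing the uniform Step~2 bound over the $O(T)$ unit intervals costs a factor $T^{1/2}$. The paper needs a constant independent of $T$: in its setting $T\ge1$ is large, and it says explicitly that it must check this lemma is unaffected by that.

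The missing argument is short. Take $\sum_j\phi_j^2=1$ with $\phi_j$ supported in $[j-1,j+2]$, and write $uv=\sum_j(\phi_ju)(\phi_jv)$. Use bounded overlap, then apply Step~2 on the three unit intervals covering the support of each term:
\begin{equation*}
\|uv\|_{L^2(\R\times\T^d)}^2\les\sum_j\|(\phi_ju)(\phi_jv)\|_{L^2}^2\les C_\eps^2\min(N,L)^{d-2+2\eps}\sup_j\|\phi_jv\|_{X^{0,b}}^2\sum_j\|\phi_ju\|_{X^{0,b}}^2 ,
\end{equation*}
and the last two factors are $\les\|v\|_{X^{0,b}}^2$ and $\les\|u\|_{X^{0,b}}^2$ respectively.

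You were right that this is the delicate point, because the paper's own passage to $\R_t$ is not correct as written. It bounds $\sum_{n,m}\|u_nv_m\|$ by $\big(\sum_n\|u_n\|_{X^{0,b}}\big)\big(\sum_m\|v_m\|_{X^{0,b}}\big)$ and claims $\sum_n\|u_n\|_{X^{0,b}}\le 3\|u\|_{X^{0,b}}$. That $\ell^1$ bound is false: for $u=\phi(t/M)\,e^{-it\Dl}w$ with $M$ large, the left side is of order $M$ and the right side of order $M^{1/2}$. Almost orthogonality only gives the $\ell^2$ version, and one must use that $u_nv_m=0$ for $|n-m|\ge2$, as in the display above.
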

    \noi The key difference with the usual setting is that we have $T\ge 1$ and even large. This will impact the first inequality i.e. Proposition \ref{duhamel} and we check it does not impact Lemma \ref{lemmaL4toBourgain}. The rest of the argument is a direct adaptation of Theorem 3 of \cite{TzvetkovLWP} in the case of $\T^d$ with a different numerology. Therefore we only mention the adapted proof of Proposition \ref{duhamel} based on \cite{Ginibre} and repeat the proof of Lemma \ref{lemmaL4toBourgain} based on \cite{TzvetkovLWP}. \\
    \subsubsection{Proof of Proposition \ref{duhamel}}
     We write for $w>0$ :
        \begin{align*}
            \psi_T \int_0^t f(s)ds &=\psi_T\int_\R \frac{e^{it\tau}-1}{i\tau}\Ft_{t}(f)(\tau)d\tau \\
            & = \psi_T\sum_{k=1}^\infty \frac{t^k}{k!}\int_{|\tau|T^w\le 1} (i\tau)^{k-1}\Ft_{t}(f)(\tau)d\tau - \psi_T\int_{|\tau|T^w\ge 1} (i\tau)^{-1}\Ft_{t}(f)(\tau)d\tau \\
            & + \psi_T\int_{|\tau|T^w\ge 1} e^{it\tau}(i\tau)^{-1}\Ft_{t}(f)d\tau \\
            & := I + II + III \, .
        \end{align*}
        And we can bound :
        \begin{align*}
            \|I\|_{H^b}&\le \sum_{k=1}^\infty\|t^k\psi_T\|_{H^b} \frac{1}{k!}T^{w(1-k)}\|f\|_{H^{-b'}}\sqrt{\int_{|\tau|T^w\le 1} \jb{\tau}^{2b'}d\tau} 
        \end{align*}
        and $$\|t^k\psi_T\|_{L^2}=\sqrt{\int_\R t^{2k}\psi^2(t/T)dt}=T^{k+1/2}\sqrt{\int_\R t^{2k}\psi^2(t)dt}$$
        which is larger for $T\ge 1$ than
        $$\|t^k\psi_T\|_{\dot{H}^b}\le CT^{k+1/2-b}\sqrt{\int_\R t^{2k}\psi^2(t)dt}$$
        so one gets in the end
        \begin{align*}
            \|I\|_{H^b}&\le C \sum_{k=1}^\infty  \frac{T^{k+1/2}}{k!}T^{w(1-k)}\|f\|_{H^{-b'}}T^{w(-1/2)} \\
            & \le C T^{\frac{1}{2}+\frac{w}{2}}  (e^{T^{1-w}}-1)\|f\|_{H^{-b'}}
        \end{align*}
        as $b'\ge 0$. \\
        
        \noi We also have :
        \begin{align*}
            \|II\|_{H^b} & \le \|\psi_T\|_{H^b}\|f\|_{H^{-b'}} \sqrt{\int_{|\tau|T^w \ge 1}|\tau|^{-2}\jb{\tau}^{2b' d\tau}} \\
            &\le C T^{1/2}\|f\|_{H^{-b'}}T^{-w(b'-1/2)} \\
            & \le CT^{\frac{1}{2}+w(\frac{1}{2}-b')}
        \end{align*}
        as $b' <1/2$. \\

        \noi And finally
        \begin{align*}
            \big\|\int_{|\tau|T^w\ge 1} e^{it\tau}(i\tau)^{-1}\Ft_{t}(f)(\tau)d\tau \big\|_{H^b} & = \big\|\Ft_{t}^{-1}\big(\ind_{|\tau|T^w\ge 1} e^{it\tau}(i\tau)^{-1}\Ft_{t}(f)(\tau)\big) \big\|_{H^b} \\
            &= \sqrt{\int_\R \ind_{|\tau|T^w\ge 1} \jb{\tau}^{2b}|\tau|^{-2}|\Ft_{t}(f)(\tau)|^2d\tau} \\
            & \le  \sup_{|\tau|T^w\ge 1}|\tau|^{-1}\jb{\tau}^{b+b'}\|f\|_{H^{-b'}} \\
            & \le C T^{w(1-b-b')}\|f\|_{H^{-b'}}
        \end{align*}
        as $b+b'\le 1$.
        And
        \begin{align*}
            \big\|\int_{|\tau|T^w\ge 1} e^{it\tau}(i\tau)^{-1}\Ft_{t}(f)(\tau)d\tau \big\|_{L^2} \le CT^{w(1-b')}\|f\|_{H^{-b'}}
        \end{align*}
        so
        \begin{align*}
            \|III\|_{H^b}&=\big\|\jb{\tau}^b\Ft_{t}(\psi_T)*\Ft_{t}\Big(\int_{|\tau|T^w\ge 1} e^{it\tau}(i\tau)^{-1}\Ft_{t}(f)(\tau)d\tau\Big)\big\|_{L^2} \\
            & \le C\Big(\big\||\tau^b| \Ft_{t}(\psi_T)\big\|_{L^1}\big\|\int_{|\tau|T^w\ge 1} e^{it\tau}(i\tau)^{-1}\Ft_{t}(f)(\tau)d\tau \big\|_{L^2} \\
            &+ \big\|\Ft_{t}(\psi_T)\big\|_{L^1}\big\|\int_{|\tau|T^w\ge 1} e^{it\tau}(i\tau)^{-1}\Ft_{t}(f)(\tau)d\tau \big\|_{H^b}\Big) \\
            & \le C(T^{w(1-b')}T^{-b}+T^{w(1-b-b')}) \\
            & \le CT^{w(1-b')-\min(1,w)b} \, .
        \end{align*}
    Gathering all the bounds, we get that the best case is when $w=1$ and in that case the worst bound is of order $T$. The case where $w<0$ gives no better result. Thus one gets
    \begin{align*}
        \Big\|\psi_T(t)\int_0^t f(s)ds\Big\|_{H^b}  \le C T^{1}\|f\|_{H^{-b'}} \, .
    \end{align*}
    This concludes the proof of Proposition \ref{duhamel}. \\

    \subsubsection{Proof of Lemma \ref{lemmaL4toBourgain}}
   We first assume that $u_0$ and $v_0$ are with support in time in $[0,1]$. In this case we write : 
       \begin{align*}
           u(t):=e^{-it\Dl}f(t), \ v(t):=e^{-it\Dl}g(t) \, .
       \end{align*}
       So by Fourier inversion formula
       \begin{align*}
           uv(t)=\frac{1}{(2\pi)^2}\int_{-\infty}^\infty \int_{-\infty}^\infty e^{it\tau}e^{it\s}e^{-it\Dl}\Ft_t(f)(\tau)e^{-it\Dl}\Ft_{t}(g)(\s)d\tau d\s \, .
       \end{align*}
       Using Cauchy-Schwarz inequality in $\s,\tau$ with $b>1/2$ as well as Lemma \ref{lemmaBourgain} :
       \begin{align*}
           \|uv\|_{L^2([0,1]\times \T^d)} &\le \frac{1}{(2\pi)^2}\int_{-\infty}^\infty \int_{-\infty}^\infty \|e^{-it\Dl}\Ft_{t}(f)(\tau)e^{-it\Dl}\Ft_{t}(g)(\s)\|_{L^2([0,1]\times \T^d)}d\tau d\s \\
           & \le \frac{1}{(2\pi)^2}\int_{-\infty}^\infty \int_{-\infty}^\infty C_\eps \min(N,L)^{\frac{d-2}{2}+\eps}\|\Ft_{t}(f)(\tau)\|_{L^2( \T^d)}\|\Ft_{t}(g)(\s)\|_{L^2( \T^d)}d\tau d\s \\
           & \le C_\eps \min(N,L)^{\frac{d-2}{2}+\eps} \|\jb{\tau}^{b}\Ft_{t}(f)(\tau)\|_{L^2(\R\times \T^d)}\|\jb{\s}^{b}\Ft_{t}(g)(\s)\|_{L^2( \R\times\T^d)} \\
           &=  C_\eps \min(N,L)^{\frac{d-2}{2}+\eps} \|u\|_{X^{0,b}}\|v\|_{X^{0,b}} \, .
       \end{align*}
       Now let us go to the case where $u$ and $v$ are not necessarily with support in $[0,1]$. We take $\psi \in \C_c^\infty$ non-negative with support included in $[0,1]$ such that $\|\psi\|_{L^\infty} \le 1$ and such that  $(\psi_n):=(\psi(.-n/2))_{n\in\Z}$ is a partition of unity and for $n\in \Z$, $\psi_n \psi_{n+1}$ has support in $[\frac{n+1}{2},\frac{n}{2}+1]$. We write
       \begin{align*}
           u(t)=\sum_{n\in \Z}\psi(t-n/2)u(t)=:\sum_{n\in \Z}u_n(t), \ v(t)=\sum_{m\in \Z}\psi(t-m/2)v(t)=:\sum_{m\in \Z}v_m(t) \, .
       \end{align*}
       We notice that the $u_n$ are almost orthogonal i.e. :
       \begin{align*}
           \|u\|_{L^2_t}^2&=\|\sum_{n\in\Z}u_n\|_{L^2_t}^2 \\
           &=\sum_{n,m\in \Z}\langle u_n, u_m \rangle_{L^2} \\
           &=\sum_{n\in \Z}\|u_n\|_{L^2}+\langle u_n, u_{n-1} \rangle_{L^2}+\langle u_n, u_{n+1} \rangle_{L^2} \\
           & \ge \sum_{n\in \Z}\|u_n\|_{L^2}-\int_{[n/2,(n+1)/2]}u^2-\int_{[(n+1)/2,(n+2)/2]}u^2 \\
           &\ge \sum_{n\in \Z}\|u_n\|_{L^2}-2\int_\R u^2 \, .
       \end{align*}
       
       And then by triangular inequality :
       \begin{align*}
           \|uv\|_{L^2(\R\times \T^d)} &\le \sum_{n,m\in \Z}\|u_nv_m\|_{L^2(\R\times \T^d)} \\
           & \le C_\eps \min(N,L)^{\frac{d-2}{2}+\eps} \sum_{n,m\in \Z}  \|u_n\|_{X^{0,b}}\|v_m\|_{X^{0,b}} \\
           &=C_\eps \min(N,L)^{\frac{d-2}{2}+\eps} \sum_{n\in \Z}  \|u_n\|_{X^{0,b}}\sum_{m\in \Z}\|v_m\|_{X^{0,b}}
       \end{align*}
       and by the previous computation :
       \begin{align*}
           \sum_{n\in \Z}  \|u_n\|_{X^{0,b}} &= \sum_{n\in \Z}  \|S(-t)u_n\|_{H^b_tL^2_x} \\
           &= \sum_{n\in \Z}  \|\jb{\nabla}^bS(-t)u_n\|_{L^2_tL^2_x} \\
           &\le 3   \|\jb{\nabla}^bS(-t)\sum_{n\in \Z}u_n\|_{L^2_tL^2_x} \ \ \ \text{by almost orthogonality of the $u_n$ functions} \\
           &=3\|u\|_{X^{0,b}}
       \end{align*}
       so
       \begin{align*}
           \|uv\|_{L^2(\R\times \T^d)}\le C_\eps \min(N,L)^{\frac{d-2}{2}+\eps}\|u\|_{X^{0,b}} \|v\|_{X^{0,b}} \, .
       \end{align*}
    This concludes the proof. \\
    
\noi In the end, the fixed point theorem gives a time of existence of $T\sim \|u_0\|_{H^s}$ in $C([-T,T],H^s(\T^d))$ for $s>\frac{d}{2}-1$.

\begin{ackno}\rm
 I would like to thank my PhD supervisor Nikolay Tzvetkov who was of a considerable help.
\end{ackno}

\end{document}